\newtheorem{theorem}[equation]{Theorem}
\newtheorem{lemma}[equation]{Lemma}
\newtheorem{corollary}[equation]{Corollary}
\newtheorem{proposition}[equation]{Proposition}
\newtheorem{remark}[equation]{Remark}
\newtheorem{definition}[equation]{Definition}
\newtheorem{example}[equation]{Example}
\newcommand{\cM}{\mathcal{M}}
\renewcommand{\gg}{\mathfrak{g}}
\newcommand{\gp}{\mathfrak{p}}
\newcommand{\gq}{\mathfrak{q}}
\def\revddots{\mathinner{\mkern1mu\raise\p@
\vbox{\kern7\p@\hbox{.}}\mkern2mu
\raise4\p@\hbox{.}\mkern2mu\raise7\p@\hbox{.}\mkern1mu}}
\def\ga{\mathfrak{a}}
\def\gg{\mathfrak{g}}
\def\ggl{\mathfrak{gl}}
\def\gk{\mathfrak{k}}
\def\gl{\mathfrak{l}}
\def\gm{\mathfrak{m}}
\def\gn{\mathfrak{n}}
\def\gp{\mathfrak{p}}
\def\gq{\mathfrak{q}}
\def\gr{\mathfrak{r}}
\def\gs{\mathfrak{s}}
\def\gsl{\mathfrak{sl}}
\def\gso{\mathfrak{so}}
\def\gsu{\mathfrak{su}}
\def\gsp{\mathfrak{sp}}
\def\gt{\mathfrak{t}}
\def\gu{\mathfrak{u}}
\def\gz{\mathfrak{z}}
\def\gS{\mathfrak{S}}
\def\Aut{{\rm Aut}}
\def\Ad{{\rm Ad}}
\def\trace{{\rm trace\,\,}}
\def\Ind{{\rm Ind\,}}
\def\C{\mathbb{C}}
\def\D{\mathbb{D}}
\def\E{\mathbb{E}}
\def\F{\mathbb{F}}
\def\H{\mathbb{H}}
\def\N{\mathbb{N}}
\def\R{\mathbb{R}}
\def\Z{\mathbb{Z}}
\def\cC{\mathcal{C}}
\def\cF{\mathcal{F}}
\def\cH{\mathcal{H}}
\def\cM{\mathcal{M}}
\def\cU{\mathcal{U}}
\begin{document}

\title[Principal series representations of infinite dimensional Lie
groups]{Principal series representations of infinite dimensional Lie \\
groups, II: Construction of induced representations}

\author{Joseph A. Wolf}

\address{Department of Mathematics, University of California,
Berkeley, CA 94720--3840}
\curraddr{}
\email{jawolf@math.berkeley.edu}
\thanks{Research partially supported by the Simons Foundation}
\subjclass[2010]{Primary 32L25; Secondary 22E46, 32L10}

\date{October 18, 2012}

\begin{abstract}We study representations of the classical infinite 
dimensional real simple Lie groups $G$ induced from factor representations 
of minimal parabolic subgroups $P$\,.
This makes strong use of the recently developed structure theory for those 
parabolic subgroups and subalgebras.  In general parabolics in the infinite 
dimensional classical Lie groups are are somewhat more complicated than
in the finite dimensional case, and are not direct limits of finite 
dimensional parabolics.  We extend their structure theory and use it for 
the infinite dimensional analog of the classical principal series 
representations.  In order to do this we examine two types of conditions
on $P$: the flag-closed condition and minimality.  We use some
riemannian symmetric space theory to prove that if
$P$ is flag-closed then any maximal lim-compact subgroup $K$ of $G$
is transitive on $G /P$\,.  When $P$ is minimal we prove that
it is amenable, and we use properties of amenable groups
to induce unitary representations $\tau$ of $P$ up to continuous
representations $\Ind_P^G(\tau)$ of $G$ on complete locally convex 
topological vector spaces.
When $P$ is both minimal and flag-closed we have a decomposition $P = MAN$
similar to that of the finite dimensional case, and we show how this gives 
$K$--spectrum information $\Ind_P^G(\tau)|_K = \Ind_M^K(\tau|_M)$. 
\end{abstract}

\maketitle

\section{Introduction}\label{sec1} \setcounter{equation}{0}

This paper continues a program of extending aspects of representation
theory from finite dimensional real semisimple groups to infinite dimensional 
real Lie groups.  The
finite dimensional theory depends on the structure of parabolic
subgroups.  That structure was recently been worked out for the classical
real direct limit Lie algebras such as $\gsl(\infty,\R)$ and $\gsp(\infty;\R)$ 
\cite{DCPW} and then developed for minimal parabolic subgroups
(\cite{W6}, \cite{W7}).  Here we refine that structure theory, and 
investigate it in detail when the flags defining the parabolic consist of
closed (in the Mackey topology) subspaces.  Then we develop a notion of 
induced representation that makes use of the structure of minimal parabolics, 
and we use it to construct an infinite dimensional counterpart of the 
principal series representations of finite dimensional real reductive Lie 
groups.
\smallskip

The representation theory of finite dimensional real reductive Lie groups 
is based on the now--classical constructions and Plancherel Formula of 
Harish--Chandra.  Let $G$ be a real reductive Lie group of Harish-Chandra 
class, e.g.  $SL(n;\R)$, $U(p,q)$, $SO(p,q)$, \dots.  Then
one associates a series of representations to each conjugacy class of
Cartan subgroups.  Roughly speaking this goes as follows.
Let $Car(G)$ denote the set of conjugacy classes $[H]$ of Cartan subgroups 
$H$ of $G$.  Choose $[H] \in Car(G)$, $H \in [H]$, and an irreducible unitary 
representation $\chi$ of $H$.  Then we have a ``cuspidal'' parabolic subgroup 
$P$ of $G$ constructed from $H$, and a unitary representation 
$\pi_\chi$ of $G$ constructed from $\chi$ and $P$.  Let
$\Theta_{\pi_\chi}$ denote the distribution character of $\pi_\chi$\, .
The Plancherel Formula: if $f\in \cC(G)$, the Harish-Chandra Schwartz space,
then
\begin{equation}\label{planch}
f(x) = \sum_{[H] \in Car(G)} \,\,\int_{\widehat{H}}
        \Theta_{\pi_\chi}(r_xf) d\mu_{[H]}(\chi)
\end{equation}
where $r_x$ is right translation and $\mu_{[H]}$ is Plancherel measure
on the unitary dual $\widehat{H}$.
\smallskip

In order to extend elements of this theory to real semisimple direct
limit groups, we have to look more closely at the construction of the
Harish--Chandra series that enter into (\ref{planch}).
\smallskip

Let $H$ be a Cartan subgroup of $G$.  It is stable under a
Cartan involution $\theta$, an involutive automorphism of $G$ whose
fixed point set $K = G^\theta$ is a maximal compactly embedded\footnote{A
subgroup of $G$ is {\sl compactly embedded} if it has compact image under
the adjoint representation of $G$.} subgroup.
Then $H$ has a $\theta$--stable decomposition
$T \times A$ where $T = H \cap K$ is the compactly embedded part 
and (using lower case Gothic letters for Lie algebras)
$\exp : \ga \to A$ is a bijection.  Then $\ga$ is commutative and acts
diagonalizably on $\gg$.  Any choice of positive $\ga$--root
system defines a parabolic subalgebra $\gp = \gm + \ga + \gn$ in $\gg$ 
and thus defines a parabolic subgroup $P = MAN$ in $G$.  If $\tau$ is an
irreducible unitary representation of $M$ and $\sigma \in \ga^*$ then
$\eta_{\tau,\sigma}: man \mapsto e^{i\sigma(\log a)}\tau(m)$ is a well
defined irreducible unitary representation of $P$.  The equivalence class
of the unitarily induced representation $\pi_{\tau,\sigma} =
\Ind_P^G(\eta_{\tau,\sigma})$ is independent of the choice of positive 
$\ga$--root system.  The group $M$ has (relative) discrete series 
representations,
and $\{\pi_{\tau,\sigma} \mid \tau \text{ is a discrete series rep of } M\}$
is the series of unitary representations associated to 
$\{\Ad(g)H \mid g \in G\}$.
\smallskip

Here we work with the simplest of these series, the case where $P$ is a minimal
parabolic subgroup of $G$, for the classical infinite dimensional real simple
Lie groups $G$.  In \cite{W7} we worked out the basic structure of those minimal
parabolic subgroups.
As in the finite dimensional case, a minimal parabolic has structure 
$P = MAN$ where $M = P \cap K$ is a (possibly infinite)
direct product of torus groups, compact classical groups such as $Spin(n)$,
$SU(n)$, $U(n)$ and $Sp(n)$, and their classical direct limits $Spin(\infty)$,
$SU(\infty)$, $U(\infty)$ and $Sp(\infty)$ (modulo intersections and discrete 
central subgroups).  There in \cite{W7} we also discussed various classes 
of representations
of the lim-compact group $M$ and the parabolic $P$.  Here we discuss the 
unitary induction procedure $\Ind_{MAN}^G(\tau \otimes e^{i\sigma})$ where
$\tau$ is a unitary representation of $M$ and $\sigma \in \ga^*$.
The complication, of course, is that we can no longer integrate over $G/P$.
\smallskip

There are several new ideas in this note.  One is to define a new class of
parabolics, the {\em flag-closed parabolics}, and apply some riemannian 
geometry to prove a transitivity theorem, Theorem \ref{kgp-global}.
Another is to extend the standard finite dimensional decomposition $P = MAN$ 
to minimal parabolics; that is Theorem \ref{lang-alg}.  A third is to put
these together with amenable group theory to construct an analog of induced
representations in which integration over $G/P$ is replaced by a right
$P$--invariant means on $G$.  That produces continuous representations of
$G$ on complete locally convex topological vector spaces, which are
the analog of principal series representations.  Finally, if $P$ is 
flag-closed and minimal, a close look at
this amenable induction process gives the $K$-spectrum of our representations.
representations.
\smallskip

We sketch the nonstandard part of the necessary background in Section 
\ref{sec2abc}.
First, we recall the classical simple real direct limit 
Lie algebras and Lie groups.  There are no surprises.  Then we
sketch the theory of complex and real parabolic subalgebras.  Finally  
we indicate structural aspects such as Levi components and 
the Chevalley decomposition.  That completes the background.
\smallskip

In Section \ref{sec3} we specialize to parabolics whose defining flags consist 
of closed subspaces in the Mackey topology, that is $F = F^{\perp\perp}$.
The main result, Theorem \ref{kgp-global}, is that a maximal 
lim--compact subgroup
$K \subset G$ is transitive on $G/P$.  This involves the geometry of the
(infinite dimensional) riemannian symmetric space $G/K$.  Without the
flag--closed property it would not even be clear whether $K$ has an open orbit
on $G/P$.
\smallskip

In Section \ref{sec4} we work out the basic properties of minimal 
self--normalizing parabolic subgroups of $G$, refining results of
\cite{W6} and \cite{W7}.  The the Levi components are locally
isomorphic to direct sums in an explicit way of subgroups that are either
the compact classical groups $SU(n)$, $SO(n)$ or $Sp(n)$, or
their limits $SU(\infty)$, $SO(\infty)$ or $Sp(\infty)$.  The Chevalley
(maximal reductive part) components are slightly more complicated, for
example involving extensions $1 \to SU(*) \to U(*) \to T^1 \to 1$ as
well as direct products with tori and vector groups.
The main result, Theorem \ref{lang-alg},
is the minimal parabolic analog of standard structure theory for real 
parabolics in finite dimensional real reductive Lie groups.
Proposition \ref{construct-p} then gives an explicit
construction for a self-normalizing flag-closed 
minimal parabolic with a given Levi factor.
\smallskip

In Section \ref{sec5} we put all this together with amenable group
theory.  Since strict direct limits of amenable groups are amenable, 
our maximal lim-compact group $K$ and minimal parabolic subgroups $P$
are amenable.  In particular there are means on
$G/P$, and we consider the set $\cM(G/P)$ of all such means.
Given a homogeneous hermitian vector bundle $\E_\tau \to G/P$, 
we construct a continuous representation $\Ind_P^G(\tau)$
of $G$. The representation space is a complete locally convex topological
vector space, completion of the space of all right uniformly continuous
bounded sections of $\E_\tau \to G/P$.  These representations form the 
{\em principal series} for our real group $G$ and choice of
parabolic $P$.  In the flag-closed case 
we also obtain the $K$-spectrum.
\smallskip

In fact we carry out this ``amenably induced representation'' construction 
somewhat more generally: whenever we have a topological group $G$, a closed 
amenable subgroup $H$ and a $G$--invariant subset of $\cM(G/H)$.
\smallskip

We have been somewhat vague about the unitary representation $\tau$ of $P$.
This is discussed, with references, in \cite{W7}.  We go into it in more
detail in an Appendix. 
\smallskip

I thank Elizabeth Dan-Cohen for pointing out the result indicated below as
Proposition \ref{closed-orthocomp},  and Gestur \' Olafsson for fruitful
discussions on invariant means.

\section{Parabolics in Finitary Simple Real Lie Groups}
\label{sec2abc}
\setcounter{equation}{0}
In this section we sketch the real simple countably infinite dimensional
locally finite (``finitary'') Lie algebras and the corresponding Lie groups,  
following results from \cite{B}, \cite{BS} and \cite{DCPW}.  
Then we recall the structure of parabolic subalgebras of
the complex Lie algebras $\gg_\C$ = 
$\ggl(\infty;\C)$, $\gsl(\infty);\C)$, $\gso(\infty;\C)$ and $\gsp(\infty;\C)$.
Next, we indicate the structure of real parabolic subalgebras, in other words
parabolic subalgebras  of real forms of those algebra $\gg_\C$.  This summarizes
results from \cite{DC1}, \cite{DC2} and \cite{DCPW}.
\medskip

\subsection*{2A. Finitary Simple Real Lie Groups.}
The three classical simple locally finite countable--dimensional 
complex Lie algebras are the classical direct limits
$\gg_\C = \varinjlim \gg_{n,\C}$ given by
\begin{equation}\label{cpx-lie}
\begin{aligned}
& \gsl(\infty,\C) = \varinjlim \gsl(n;\C),\\
& \gso(\infty,\C) = \varinjlim \gso(2n;\C) = \varinjlim \gso(2n+1;\C), \\
& \gsp(\infty,\C) = \varinjlim \gsp(n;\C),
\end{aligned}
\end{equation}
where the direct systems are given by the inclusions of the form
$A \mapsto (\begin{smallmatrix} A & 0 \\ 0 & 0 \end{smallmatrix} )$.
We will also consider the locally reductive algebra
$\ggl(\infty;\C) = \varinjlim \ggl(n;\C)$ along with $\gsl(\infty;\C)$.
The direct limit process of (\ref{cpx-lie}) defines the universal enveloping
algebras
\begin{equation}\label{univ-cpx-lie}
\begin{aligned}
& \cU(\gsl(\infty,\C)) = \varinjlim \cU(\gsl(n;\C)) \text{ and }
	\cU(\ggl(\infty,\C)) = \varinjlim \cU(\ggl(n;\C)), \\
& \cU(\gso(\infty,\C)) = \varinjlim \cU(\gso(2n;\C)) = \varinjlim \cU(\gso(2n+1;\C)),        
        \text{ and }\\
& \cU(\gsp(\infty,\C)) = \varinjlim \cU(\gsp(n;\C)),
\end{aligned}
\end{equation}
\smallskip

Of course each of these Lie algebras $\gg_\C$ has the underlying structure of
a real Lie algebra.  Besides that, their real forms are as follows (\cite{B},
\cite{BS}, \cite{DCPW}).
\smallskip

If  $\gg_\C = \gsl(\infty;\C)$,  then $\gg$ is one of 
$\gsl(\infty;\R) = \varinjlim \gsl(n;\R)$, the real
special linear Lie algebra;
$\gsl(\infty;\H) = \varinjlim \gsl(n;\H)$, the quaternionic
special linear Lie algebra, given by
$\gsl(n;\H) := \ggl(n;\H) \cap \gsl(2n;\C)$;
$\gs\gu(p,\infty) = \varinjlim \gs\gu(p,n)$, the complex special
unitary Lie algebra of real rank $p$; or
$\gs\gu(\infty,\infty) = \varinjlim \gs\gu(p,q)$, complex
special unitary algebra of infinite real rank.
\smallskip

If  $\gg_\C = \gso(\infty;\C)$,  then $\gg$ is one of 
$\gso(p,\infty) = \varinjlim \gso(p,n)$, the real orthogonal
Lie algebra of finite real rank $p$;
$\gso(\infty,\infty) = \varinjlim \gso(p,q)$, the real
orthogonal Lie algebra of infinite real rank; or
$\gso^*(2\infty) = \varinjlim \gso^*(2n)$
\smallskip

If  $\gg_\C = \gsp(\infty;\C)$,  then $\gg$ is one of 
$\gsp(\infty;\R) = \varinjlim \gsp(n;\R)$, the real
symplectic Lie algebra;
$\gsp(p,\infty) = \varinjlim \gsp(p,n)$, the quaternionic unitary
Lie algebra of real rank $p$; or
$\gsp(\infty,\infty) = \varinjlim \gsp(p,q)$, quaternionic
unitary Lie algebra of infinite real rank.
\smallskip

If  $\gg_\C = \ggl(\infty;\C)$,  then $\gg$ is one 
$\ggl(\infty;\R) = \varinjlim \ggl(n;\R)$, the real
general linear Lie algebra;
$\ggl(\infty;\H) = \varinjlim \ggl(n;\H)$, the quaternionic
general linear Lie algebra;
$\gu(p,\infty) = \varinjlim \gu(p,n)$, the complex unitary
Lie algebra of finite real rank $p$; or
$\gu(\infty,\infty) = \varinjlim \gu(p,q)$, the complex
unitary Lie algebra of infinite real rank.
\smallskip

As in (\ref{univ-cpx-lie}), given one of these Lie algebras 
$\gg = \varinjlim \gg_n$ we have the universal enveloping algebra.
Just as in the
finite dimensional case, we use the universal enveloping algebra of the
complexification.  Thus when we write $\cU(\gg)$ it is understood that 
we mean $\cU(\gg_\C)$.  
\smallskip

The corresponding Lie groups are exactly what one expects.  First the
complex groups, viewed either as complex groups or as real groups,
\begin{equation}\label{cpl_gps}
\begin{aligned}
& SL(\infty;\C) = \varinjlim SL(n;\C) \text{ and } GL(\infty;\C) = 
     \varinjlim GL(n;\C), \\
& SO(\infty;\C) = \varinjlim SO(n;\C) = \varinjlim SO(2n;\C)
     = \varinjlim SO(2n+1;\C), \\
& Sp(\infty;\C) = \varinjlim Sp(n;\C).
\end{aligned}
\end{equation}
The real forms of the complex special and general linear groups 
$SL(\infty;\C)$ and $GL(\infty;\C)$ are
\begin{equation}\label{typeA}
\begin{aligned}
& SL(\infty;\R) \text{ and } GL(\infty;\R): 
     \text{ real special/general linear groups, } \\
& SL(\infty;\H): \text{ quaternionic special linear group, } \\
& SU(p,\infty): \text{ special unitary groups
     of real rank } p < \infty, \\
& SU(\infty,\infty): \text{ 
     unitary groups of infinite real rank,}\\
& U(p,\infty): \text{ unitary groups
     of real rank } p < \infty, \\
& U(\infty,\infty): \text{  
     unitary groups of infinite real rank.}
\end{aligned}
\end{equation}
The real forms of the complex orthogonal and spin groups 
$SO(\infty;\C)$ and $Spin(\infty;\C)$ are
\begin{equation}\label{typeBD}
\begin{aligned}
& SO(p,\infty) \text{, } Spin(p;\infty): \text{ orthogonal/spin
     groups of real rank } p < \infty,  \\
& SO(\infty,\infty) \text{, } Spin(\infty,\infty): \text{ orthogonal/spin
     groups of real rank } \infty, \\
& SO^*(2\infty) = \varinjlim SO^*(2n), \text{ which doesn't have a 
	standard name}
\end{aligned}
\end{equation}
Here $SO^*(2n) = SO(2n;\C) \cap U(n,n)$ where $SO^*(2n)$ is 
defined by the form $\kappa(x,y) := \sum x^\ell i \bar y^\ell = {^tx} i \bar y$
and $SO(2n;\C)$ is defined by $(u,v) = \sum (u_j v_{n+jr} + v_{n+j}w_j)$.
Finally, the real forms of the complex symplectic group $Sp(\infty;\C)$ are
\begin{equation}\label{typeC}
\begin{aligned}
& Sp(\infty;\R): \text{ real symplectic group,} \\
& Sp(p,\infty): \text{ quaternion unitary group of real rank } p < \infty, \text{ and }\\
& Sp(\infty,\infty): \text{ quaternion unitary group of infinite real rank.}
\end{aligned}
\end{equation}

\subsection*{2B. Parabolic Subalgebras.}
For the structure of parabolic subalgebras we must 
describe $\gg_\C$ in terms of linear spaces.  Let $V_\C$ and $W_\C$ be 
nondegenerately paired countably infinite dimensional complex vector spaces.   
Then
$\ggl(\infty,\C) = \ggl(V_\C, W_\C) := V_\C\otimes W_\C$ consists of all 
finite linear combinations of the rank $1$ operators 
$v\otimes w: x \mapsto \langle w, x\rangle v$.  In
the usual ordered basis of $V_\C = \C^\infty$, parameterized by the positive 
integers, and with the dual basis of $W_\C = V_\C^* = (\C^\infty)^*$, we can
view  $\ggl(\infty,\C)$ can be viewed as infinite matrices with only
finitely many nonzero entries.  However $V_\C$ has more exotic ordered bases, 
for
example parameterized by the rational numbers, where the matrix picture is
not intuitive.
\smallskip

The rank 1 operator $v\otimes w$ has a well defined trace, so trace is
well defined on $\ggl(\infty,\C)$.  Then $\gsl(\infty,\C)$ is the
traceless part, $\{g \in \ggl(\infty;\C) \mid \trace g = 0\}$.  
\smallskip

In the orthogonal case we can take $V_\C = W_\C$ using the symmetric bilinear
form that defines $\gso(\infty;\C)$.  Then
$$
\gso(\infty;\C) = \gso(V,V) = \Lambda\ggl(\infty;\C) \text{ where }
	\Lambda(v\otimes v') = v\otimes v' - v'\otimes v.
$$
In other words, in an ordered orthonormal basis of $V_\C = \C^\infty$ 
parameterized by the positive integers, $\gso(\infty;\C)$ can be viewed
as the infinite antisymmetric matrices with only finitely many nonzero entries.
\smallskip

Similarly, in the symplectic case we can take $V_\C = W_\C$ using the 
antisymmetric
bilinear form that defines $\gsp(\infty;\C)$, and then
$$
\gsp(\infty;\C) = \gsp(V,V) = S\ggl(\infty;\C) \text{ where }
        S(v\otimes v') = v\otimes v' + v'\otimes v.
$$
In an appropriate ordered basis of $V_\C = \C^\infty$ parameterized
by the positive integers, $\gsp(\infty;\C)$ can be viewed
as the infinite symmetric matrices with only finitely many nonzero entries.
\smallskip

In the finite dimensional setting, Borel subalgebra means a maximal solvable
subalgebra, and parabolic subalgebra means one that contains a Borel.  It is
the same here except that one must use {\em locally solvable} to avoid the
prospect of an infinite derived series.

\begin{definition}
{\rm A maximal locally solvable subalgebra of $\gg_\C$ is called a 
{\em Borel subalgebra} of $\gg_\C$\,.  A {\em parabolic subalgebra} of $\gg_\C$ 
is a subalgebra that contains a Borel subalgebra.}
\hfill $\diamondsuit$
\end{definition}
\smallskip

In the finite dimensional setting a parabolic subalgebra is the stabilizer
of an appropriate nested sequence of subspaces (possibly with an orientation
condition in the orthogonal group case).  In the infinite dimensional setting
here,  one must be very careful as
to which nested sequences of subspaces are appropriate.  If $F$ is a subspace
of $V_\C$ then $F^\perp$ denotes its annihilator in $W_\C$. Similarly if ${'F}$
is a subspace of $W_\C$ the ${'F}^\perp$ denotes its annihilator in $V_\C$.  
We say that $F$ (resp. ${'F}$) is {\em closed} if $F = F^{\perp\perp}$
(resp. ${'F} = {'F}^{\perp\perp}$).  This is the closure relation in the
Mackey topology \cite{M}, i.e. the weak topology for the functionals on $V_\C$ from
$W_\C$ and on $W_\C$ from $V_\C$.
\smallskip

In order to avoid repeating the following definitions later on, we make them 
in somewhat greater generality than we need just now.

\begin{definition} \label{genflag}
{\rm Let $V$ and $W$ be countable dimensional vector spaces over a real
division ring
$\D = \R, \C \text{ or } \H$, with a nondegenerate bilinear pairing
$\langle \cdot , \cdot \rangle : V \times W \rightarrow \D$.  
A {\em chain} or {\em $\D$--chain} in $V$ (resp. $W$) is a set of 
$\D$--subspaces totally ordered by inclusion.  An {\em generalized $\D$--flag}
in $V$ (resp. $W$) is an $\D$--chain such that each subspace has an 
immediate predecessor or an immediate successor in the inclusion ordering, 
and every nonzero vector of $V$ (or $W$) is caught between an immediate 
predecessor successor (IPS) pair.  An generalized $\D$--flag $\cF$ in $V$
(resp. ${'\cF}$ in $W$) is {\em semiclosed} if 
$F \in \cF$ with $F \ne F^{\perp\perp}$ implies $\{F,F^{\perp\perp}\}$
is an IPS pair (resp. $'F \in {'\cF}$ with $'F \ne 'F^{\perp\perp}$ 
implies $\{'F,'F^{\perp\perp}\}$ is an IPS pair).}
\hfill $\diamondsuit$
\end{definition}

\begin{definition}\label{taut}
{\rm Let $\D$, $V$ and $W$ be as above.
Generalized $\D$--flags $\cF$ in $V$ and ${'\cF}$ in $W$ form a 
{\em taut couple} when
(i) if $F \in \cF$ then $F^\perp$ is
invariant by the $\ggl$--stabilizer of $'\cF$ and
(ii) if $'F \in {'\cF}$ then its annihilator
$'F^\perp$ is invariant by the $\ggl$--stabilizer of $\cF$.} 
\hfill $\diamondsuit$
\end{definition}

In the
$\gso$ and $\gsp$ cases one can use the associated bilinear form to
identify $V_\C$ with $W_\C$ and $\cF$ with ${'\cF}$.
Then we speak of a generalized flag $\cF$ in $V$ as {\em self--taut}.
If $\cF$ is a self--taut generalized flag in $V$ then \cite{DCPW}
every $F \in \cF$ is either isotropic or co--isotropic.
\smallskip

\begin{theorem} \label{self-norm-cpx-parab}
The self--normalizing parabolic subalgebras of the Lie algebras $\gsl(V,W)$ and
$\ggl(V,W)$ are the normalizers of taut couples of semiclosed generalized flags
in $V$ and $W$, and this is a one to one correspondence.  
The self--normalizing parabolic subalgebras of $\gsp(V)$
are the normalizers of self--taut semiclosed generalized flags in $V$, and 
this too is a one to one correspondence.
\end{theorem}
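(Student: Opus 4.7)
The plan is to prove both directions of the asserted bijection, treating the $\gsl$/$\ggl$ case first and then deducing the $\gsp$ case by symmetry. For the forward direction, I would start with a self-normalizing parabolic $\gp \subseteq \gg_\C$ and recover its flag data from the lattice of $\gp$-invariant subspaces. Since $\gp$ contains a Borel subalgebra $\gb$, and Borel subalgebras of $\ggl(V,W)$ and $\gsl(V,W)$ are known (from \cite{DCPW}) to be stabilizers of taut couples of maximal closed generalized flags $(\cF_\gb, {'\cF_\gb})$, the chains of $\gp$-invariant subspaces $\cF \subseteq \cF_\gb$ in $V$ and ${'\cF} \subseteq {'\cF_\gb}$ in $W$ are obtained by coarsening. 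The first technical point is to verify that $\cF$ and ${'\cF}$ remain generalized flags in the sense of Definition \ref{genflag}: every nonzero vector must lie between an immediate predecessor-successor pair. This follows because passing from $\gb$ to $\gp$ only merges adjacent subspaces, so the IPS property is preserved.

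Next I would establish the taut couple property and the semiclosed condition. For any $F \in \cF$, the annihilator $F^\perp \subseteq W$ is $\gp$-invariant because $\gp$ acts compatibly with the pairing $\langle \cdot, \cdot \rangle$, and similarly $F^{\perp\perp} \subseteq V$ is $\gp$-invariant. Hence $F^{\perp\perp}$ lies in the chain $\cF$. If $\{F, F^{\perp\perp}\}$ were not an IPS pair, one could insert an intermediate $\gp$-invariant subspace, and the subalgebra of $\gg_\C$ stabilizing the refined chain would still normalize $\gp$ while properly containing it, contradicting self-normalization. This delivers the semiclosed property and, by a parallel argument on the $W$ side, the taut couple condition.

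For the reverse direction, given a taut couple $(\cF, {'\cF})$ of semiclosed generalized flags, I would define $\gp$ as the subalgebra of $\gg_\C$ preserving both $\cF$ and ${'\cF}$. Tautness makes this a well-defined subalgebra, and by refining $(\cF, {'\cF})$ to a maximal closed taut couple and invoking the Borel classification, one sees $\gp$ contains a Borel subalgebra, so it is parabolic. Self-normalization follows because $\cF$ and ${'\cF}$ can be reconstructed intrinsically from $\gp$ as the maximal chains of $\gp$-invariant $\D$-subspaces, so any element normalizing $\gp$ must already preserve $(\cF, {'\cF})$ and hence lie in $\gp$. This also gives injectivity of the correspondence, and the forward direction gives surjectivity.

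The symplectic case is handled by identifying $V \cong W$ through the nondegenerate antisymmetric form, which collapses the taut couple into a single self-taut generalized flag; one then checks that each $F \in \cF$ is either isotropic or coisotropic and repeats the argument above. The main obstacle throughout is managing the semiclosed condition, since in infinite dimensions one may have $F \ne F^{\perp\perp}$. Both the necessity of the IPS constraint on $\{F, F^{\perp\perp}\}$ (forced by self-normalization) and its sufficiency (needed so that the normalizer is not strictly larger than $\gp$) require a careful local analysis of how $\gp$-invariant subspaces interact with the Mackey closure, and this is where the bulk of the work lies.
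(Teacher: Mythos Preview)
The paper does not prove this theorem. Section~2 is explicitly a survey of background (``we recall the structure of parabolic subalgebras \dots\ This summarizes results from \cite{DC1}, \cite{DC2} and \cite{DCPW}''), and Theorem~\ref{self-norm-cpx-parab} is stated without proof as a result imported from the Dan-Cohen--Penkov papers. So there is no ``paper's own proof'' to compare against; your proposal is an attempt to reprove a cited theorem.

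On its own merits, your outline has the right architecture but contains a genuine logical slip. In your argument for the semiclosed condition you write: if $\{F, F^{\perp\perp}\}$ were not an IPS pair, inserting an intermediate $\gp$-invariant subspace would produce a stabilizer that ``normalizes $\gp$ while properly containing it.'' This is backwards. Refining a flag yields a \emph{smaller} stabilizer, not a larger one: the stabilizer of the refined chain sits inside $\gp$, and that does not contradict self-normalization of $\gp$. The actual mechanism is the reverse. If the flag fails to be semiclosed, the stabilizer $\gp$ of that flag is \emph{not} self-normalizing: there is a strictly larger subalgebra (roughly, the stabilizer of the flag obtained by replacing certain $F$ by $F^{\perp\perp}$) that normalizes $\gp$. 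Equivalently, from the $\gp$ side, the chain of \emph{all} $\gp$-invariant subspaces automatically contains every $F^{\perp\perp}$, and the delicate point is showing that this chain of invariants is already a semiclosed generalized flag whose stabilizer equals $\gp$ exactly when $\gp$ is self-normalizing.

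Your reverse direction is also too quick. The claim that the flag can be reconstructed from $\gp$ as ``the maximal chains of $\gp$-invariant subspaces'' is precisely what needs proof, and it fails in general without the semiclosed hypothesis (different flags can have the same stabilizer). The injectivity and the self-normalization assertion both rest on this recovery step, which in the Dan-Cohen--Penkov treatment requires a careful analysis of how the Mackey closure interacts with the invariant-subspace lattice. You flag this as ``where the bulk of the work lies,'' which is accurate, but as written the proposal does not supply that work.
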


\begin{theorem} \label{self-norm-cpx-parab-so}
The self--normalizing parabolic subalgebras of $\gso(V)$ are the 
normalizers of self--taut semiclosed generalized flags $\cF$ in $V$, and
there are two possibilities:
\begin{enumerate}
\item the flag $\cF$ is uniquely determined by the parabolic, or
\item there are exactly three self--taut generalized flags with the same 
stabilizer as $\cF$.
\end{enumerate}
The latter case occurs precisely when there exists an isotropic subspace $L \in \cF$ with  $\dim_\C L^\perp / L = 2$.  The three flags with the same stabilizer are then
\begin{itemize}
\item[] $\{F \in \cF \mid F \subset L \textrm{ or } L^\perp \subset F \}$
\item[] $\{F \in \cF \mid F \subset L \textrm{ or } L^\perp \subset F \} \cup M_1$
\item[] $\{F \in \cF \mid F \subset L \textrm{ or } L^\perp \subset F \} \cup M_2$
\end{itemize}
where $M_1$ and $M_2$ are the two maximal isotropic subspaces containing $L$.
\end{theorem}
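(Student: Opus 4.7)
The plan is to extend Theorem \ref{self-norm-cpx-parab} from $\gsl, \ggl, \gsp$ to $\gso$ and then analyze when distinct self-taut semiclosed generalized flags can share a normalizer. The first step is the orthogonal analog of that theorem: every self-normalizing parabolic $\gp \subset \gso(V)$ is the normalizer of some self-taut semiclosed generalized flag in $V$. This proof is parallel to the one in \cite{DCPW} for the $\gsl$ and $\gsp$ cases; one may also assume without loss of generality that $F^\perp \in \cF$ for all $F \in \cF$, since adjoining such perps to $\cF$ enlarges it to a self-taut flag with the same normalizer. The genuinely new content of Theorem \ref{self-norm-cpx-parab-so} is the description of the failure of injectivity.

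For the ``three flags'' direction, suppose $L \in \cF$ is isotropic with $\dim_\C L^\perp/L = 2$. Self-tautness and semiclosedness then force $L^\perp \in \cF$. In the 2-dimensional quadratic space $L^\perp/L$, the only proper nontrivial isotropic or coisotropic subspaces are its two isotropic lines, which correspond to the maximal isotropic subspaces $M_1, M_2 \subset V$ containing $L$; these are thus the only candidates to appear strictly between $L$ and $L^\perp$ in any self-taut flag. The normalizer $\gp$ acts on $L^\perp/L$ through $\gso(L^\perp/L) = \gso(2;\C)$, which is one-dimensional and abelian; in a basis diagonalizing the form, elements of $\gso(2;\C)$ take the shape $\diag(c,-c)$, so they preserve both isotropic lines. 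Hence every element of $\gp$ stabilizes each $M_i$ individually. Putting $\cF_0 = \{F \in \cF : F \subset L \text{ or } L^\perp \subset F\}$, one obtains $N_{\gso(V)}(\cF_0) = N_{\gso(V)}(\cF_0 \cup \{M_1\}) = N_{\gso(V)}(\cF_0 \cup \{M_2\})$, and since $\{M_1,M_2\}$ is not a chain these exhaust the flag variations available at this gap.

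For exhaustiveness, suppose $\cF$ and $\cF'$ are distinct self-taut semiclosed flags with the same normalizer $\gp$, and pick $F \in \cF \setminus \cF'$. The generalized-flag structure of $\cF'$ provides $L, L' \in \cF'$ adjacent with $L \subsetneq F \subsetneq L'$. Self-tautness and semiclosedness force, up to passing to the perp, that $L$ is closed isotropic and $L' = L^\perp$. The absence of intermediate elements of $\cF'$ between $L$ and $L^\perp$, combined with the finitary richness of $\gso(V)$, implies that $\gp$ acts on $L^\perp/L$ as the full $\gso(L^\perp/L)$. Then $F/L$ must be a proper nontrivial $\gso(L^\perp/L)$-invariant subspace. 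But the standard representation of $\gso(n;\C)$ on $\C^n$ is irreducible for $n \geq 3$, so $\dim L^\perp/L = 2$ is forced; then $F/L$ is one of the two isotropic lines and $F \in \{M_1, M_2\}$, putting us in the three-flag situation.

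The principal obstacle is the ``full induced action'' claim in the exhaustiveness step: one must verify that the normalizer $\gp$ of $\cF'$ surjects onto $\gso(L^\perp/L)$ rather than onto a proper subalgebra. This requires constructing, for each element of $\gso(L^\perp/L)$, a lift to $\gso(V)$ that preserves all of $\cF'$. The construction uses a splitting of $L^\perp$ compatible with $\cF'$ and is essentially automatic in finite dimensions, but has to be carried out carefully in the finitary infinite-dimensional setting where some diligence is needed to check that the lifted element has the required finite-rank support and stabilizes every flag element beyond $L$ and $L^\perp$. Once this is in hand, the irreducibility of the standard $\gso(n;\C)$-module for $n \geq 3$ confines the ambiguity to exactly the exceptional case $n=2$ described in the theorem.
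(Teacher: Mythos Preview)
The paper does not prove this theorem. Theorem \ref{self-norm-cpx-parab-so} is stated in Section 2B as part of the background summary of results from \cite{DC1}, \cite{DC2}, \cite{DC3} and \cite{DCPW}; no proof is given or sketched in the present paper. So there is no ``paper's own proof'' to compare your proposal against.

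That said, your outline follows the expected shape of the argument in the original sources. The reduction to analyzing the induced action on $L^\perp/L$ and invoking irreducibility of the standard $\gso(n;\C)$--module for $n \geqq 3$ is the right mechanism, and you have correctly flagged the one nontrivial technical point: verifying that the normalizer of $\cF'$ surjects onto $\gso(L^\perp/L)$. One place where your sketch is thinner than it should be is the passage ``Self-tautness and semiclosedness force, up to passing to the perp, that $L$ is closed isotropic and $L' = L^\perp$.'' This is not automatic: you need to argue that if the IPS pair $(L,L')$ in $\cF'$ straddling $F$ were not of the form $(L,L^\perp)$ with $L$ isotropic, then either $F$ would already lie in $\cF'$ or the stabilizer of $\cF'$ would fail to preserve $F$. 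In the finitary setting this uses the trichotomy that every subspace in a self-taut semiclosed flag is isotropic or coisotropic, together with the observation that if $L' \ne L^\perp$ then the induced action on $L'/L$ is of $\ggl$ type rather than $\gso$ type, hence has no nontrivial invariant subspaces. Filling this in is routine but should be made explicit if you intend this as a self-contained proof.
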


If $\gp$ is a (real or complex) subalgebra of $\gg_\C$
and $\gq$ is a quotient algebra isomorphic to $\ggl(\infty;\C)$, say
with quotient map $f : \gp \to \gq$, then we refer to the composition
$trace\circ f : \gp \to \C$ as an {\em infinite trace} on $\gg_\C$.  If
$\{f_i\}$ is a finite set of infinite traces on $\gg_\C$ and $\{c_i\}$
are complex numbers, then we refer to the condition $\sum c_if_i = 0$ as
an {\em infinite trace condition} on $\gp$.  
\smallskip

\begin{theorem} \label{gen-cpx-parab}
The parabolic subalgebras  $\gp$ in $\gg_\C$ are the algebras
obtained from self normalizing parabolics $\widetilde{\gp}$ by imposing 
infinite trace conditions.
\end{theorem}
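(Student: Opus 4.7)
My strategy is to pass to the normalizer $\widetilde{\gp} := N_{\gg_\C}(\gp)$, show that it is a self-normalizing parabolic classified by Theorems~\ref{self-norm-cpx-parab} and~\ref{self-norm-cpx-parab-so}, and then identify $\gp$ inside $\widetilde{\gp}$ as the common zero locus of a finite set of infinite trace functionals.

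First I would establish that $\widetilde{\gp}$ is parabolic and self-normalizing. Parabolicity is immediate because $\widetilde{\gp} \supset \gp \supset \gb$ for any Borel $\gb \subset \gp$. Self-normalization is more delicate (normalizers are not self-normalizing in general Lie theory); here one identifies $\widetilde{\gp}$ intrinsically as the stabilizer of the $\gp$-invariant flag structure in $V$ (and in $W$ in the $\gsl, \ggl$ case), refined by adjoining double annihilators so as to produce semiclosed (self-)taut data. Then the cited classification theorems apply, and in particular $\widetilde{\gp}$ is self-normalizing.

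Next I would exploit that $\gp$ is an ideal of $\widetilde{\gp}$. Write a Levi-type decomposition $\widetilde{\gp} = \gl \oplus \gn$ with $\gn$ the locally nilpotent radical; any Borel of $\gg_\C$ that sits inside $\widetilde{\gp}$ must contain $\gn$, since $\gb + \gn$ is locally solvable and $\gb$ is maximal. Hence $\gn \subset \gb \subset \gp$, so $\widetilde{\gp}/\gp \cong \gl/(\gl \cap \gp)$ with $\gl \cap \gp$ an ideal of $\gl$ still containing a Borel of $\gl$. The flag structure of $\widetilde{\gp}$ gives $\gl$ as a direct sum of classical simple summands (finite dimensional, or of types $\gsl(\infty), \gso(\infty), \gsp(\infty)$), summands of type $\ggl(\infty)$ arising from infinite-dimensional blocks of the flag, and an abelian center. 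Each simple summand is forced into $\gl \cap \gp$ because the latter meets its Borel nontrivially, so $\gl/(\gl \cap \gp)$ is a quotient of the abelian piece built from the $\ggl(\infty)$-summands (through their codimension-one ideals $\gsl(\infty)$) together with the abelian center of $\gl$.

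Finally I would match the surviving linear directions with infinite traces in the sense of the definition preceding the theorem. The trace on any $\ggl(\infty)$-quotient of $\widetilde{\gp}$ is by definition an infinite trace, and the remaining central directions of $\gl$ stem from scalar actions on the infinite-dimensional blocks of the flag and are thus linear combinations of such traces. Consequently $\gp$ is the common kernel in $\widetilde{\gp}$ of finitely many infinite trace conditions, which is the assertion. The principal obstacle is the first step: proving that $\widetilde{\gp}$ is self-normalizing, which requires a careful construction of an intrinsic semiclosed flag associated to $\gp$ and a direct appeal to the classification of self-normalizing parabolics rather than any general Lie-theoretic argument, together with the bookkeeping for the $\gso$ ambiguity of Theorem~\ref{self-norm-cpx-parab-so}(2).
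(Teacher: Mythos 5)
First, a point of reference: the paper does not prove Theorem \ref{gen-cpx-parab} at all --- it is stated as background, summarizing results of Dan-Cohen and Penkov (\cite{DC2}, \cite{DCPW}), so there is no in-paper argument to compare yours against. Judged on its own terms, your sketch captures the right overall shape of one half of the statement (find a self-normalizing parabolic $\widetilde{\gp} \supseteq \gp$, show $\gp \supseteq [\widetilde{\gp},\widetilde{\gp}]$ so that $\gp$ is cut out by linear functionals on an abelian quotient), and the ideal-theoretic bookkeeping with the simple summands of the Levi component is essentially sound. But there are two genuine gaps.

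The larger one is that the theorem is a characterization --- ``the parabolic subalgebras \emph{are} the algebras obtained \ldots'' --- and you only address the direction that every parabolic arises this way. The converse, that imposing an infinite trace condition on a self-normalizing parabolic still yields a parabolic, is nowhere in your proposal, and it is the genuinely infinite-dimensional phenomenon: one must produce a maximal locally solvable subalgebra of $\gg_\C$ on which the given infinite traces vanish. In finite dimensions the analogous statement is false (the trace-zero part of a diagonal block of a parabolic no longer contains a Borel, since it has lost part of every Cartan subalgebra), so this cannot follow from general Lie-theoretic reasoning; it requires the existence of Borel subalgebras of $\ggl(\infty;\C)$ lying inside $\gsl(\infty;\C)$, i.e.\ the Dimitrov--Penkov/Dan-Cohen description of Borel subalgebras via exotic (e.g.\ densely ordered) maximal closed generalized flags. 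The second gap is the last step of your own direction: you assert that the surviving linear directions of $\widetilde{\gp}/\gp$ ``are thus linear combinations of such traces,'' but a priori $\gp$ could be cut out by functionals involving traces on \emph{finite-dimensional} blocks or other components of the toral part $\gt$ of $\gp_{red,\C}$. Those are not infinite trace conditions in the sense of the definition preceding the theorem, and they must be excluded precisely because imposing them would destroy the property of containing a Borel --- which is the same unaddressed issue as in the converse. Until you can say why a parabolic must contain the full toral directions attached to finite blocks while it may omit the trace direction of an infinite block, the argument is circular at its crux.
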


As a general principle one tries to be explicit by constructing representations
that are as close to irreducible as feasible.  For this reason we will be
constructing principal series representations by inducing from parabolic 
subgroups that are minimal among the self--normalizing parabolic subgroups.  
\smallskip

Now we discuss the structure of parabolic subalgebras of
real forms of the classical $\gsl(\infty,\C)$, $\gso(\infty,\C)$,
$\gsp(\infty,\C)$ and $\ggl(\infty,\C)$.  In this section $\gg_\C$ will
always be one of them and $G_\C$ will be the corresponding connected complex
Lie group.  Also, $\gg$ will be a real form of $\gg_\C$, and
$G$ will be the corresponding connected real subgroup of $G_\C$.
\smallskip

\begin{definition}\label{defrealp}
Let $\gg$ be a real form of $\gg_\C$.  Then a subalgebra 
$\gp \subset \gg$ is a {\em parabolic subalgebra} if
its complexification $\gp_\C$ is a parabolic subalgebra of $\gg_\C$.
\hfill $\diamondsuit$
\end{definition}

When $\gg$ has two inequivalent defining representations, in
other words when 
$$
\gg = \gsl(\infty;\R),\,\, \ggl(\infty;\R),\,\, 
  \gsu(*,\infty),\,\, \gu(*,\infty),\,\, 
  \text{ or } \,\,\gsl(\infty;\H)
$$
we denote them by $V$ and $W$, and when $\gg$ has only one 
defining representation, in other words when
$$
\gg = \gso(*,\infty),\,\, \gsp(*,\infty),\,\,
   \gsp(\infty;\R),\,\, \text{ or } \,\,\gso^*(2\infty) 
   \text{ as quaternion matrices,}
$$
we denote it by $V$.  The commuting algebra of $\gg$ on $V$ is a
real division algebra $\D$.  The main result of \cite{DCPW} is

\begin{theorem} \label{real-parab}
Suppose that $\gg$ has two inequivalent defining representations.  Then
a subalgebra of $\gg$ (resp. subgroup of $G$) is
parabolic if and only if it is defined by infinite trace conditions
(resp. infinite determinant conditions) on the
$\gg$--stabilizer (resp. $G$--stabilizer) of
a taut couple of generalized $\D$--flags $\cF$ in $V$ and $'\cF$ in $W$.

Suppose that $\gg$ has only one defining representation.
A subalgebra of $\gg$ (resp. subgroup) of $G$ is
parabolic if and only if it is defined by infinite trace conditions
(resp. infinite determinant conditions) on the
$\gg$--stabilizer (resp. $G$--stabilizer) of
a self--taut generalized $\D$--flag $\cF$ in $V$.
\end{theorem}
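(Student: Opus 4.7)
\emph{Proof sketch.} The plan is to reduce to the complex classification via Definition~\ref{defrealp} and then descend the $\C$-flag data produced by Theorems~\ref{self-norm-cpx-parab}, \ref{self-norm-cpx-parab-so} and~\ref{gen-cpx-parab} to $\D$-flag data using the antilinear involution $\sigma$ of $\gg_\C$ whose fixed algebra is $\gg$.

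For the ``only if'' direction I start with a real parabolic $\gp \subset \gg$. Its complexification $\gp_\C$ is parabolic in $\gg_\C$, so by Theorem~\ref{gen-cpx-parab} it is cut out from a self-normalizing parabolic $\widetilde{\gp_\C}$ by imposing infinite trace conditions. Since $\gp = \gp_\C^{\,\sigma}$ is $\sigma$-stable and the normalizer operation commutes with $\sigma$, the self-normalizer $\widetilde{\gp_\C}$ is itself $\sigma$-stable. Theorems~\ref{self-norm-cpx-parab} and~\ref{self-norm-cpx-parab-so} then identify $\widetilde{\gp_\C}$ as the stabilizer of a taut couple (or a self-taut) of semiclosed generalized $\C$-flags. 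Uniqueness in Theorem~\ref{self-norm-cpx-parab} transfers $\sigma$-invariance of $\widetilde{\gp_\C}$ to $\sigma$-invariance of the defining flag(s); in the ambiguous $\gso$ case of Theorem~\ref{self-norm-cpx-parab-so}, $\sigma$ either fixes each of $M_1, M_2$ individually or swaps them, and in the swap case one passes to the common, hence $\sigma$-stable, flag $\{F \in \cF \mid F \subset L \text{ or } L^\perp \subset F\}$.

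Next I would verify that $\sigma$-invariant members of the $\C$-flag descend to $\D$-subspaces of $V$ (and of $W$, when present). For $\D = \R$ this is just the $\sigma$-fixed subspace; for $\D = \C$ one uses the decomposition induced by an appropriate real structure; for $\D = \H$ one checks that a $\sigma$-stable $\C$-subspace of $V_\C$ is automatically stable under the quaternionic structure $j$ arising from the commutant of $\gg$ on $V$, using that $\gg$ commutes with $j$. Semiclosedness, tautness, and self-tautness transfer between the two pictures via scalar restriction and extension. The infinite trace conditions, being $\sigma$-stable complex-linear constraints on $\gp_\C$, descend to real trace conditions on the $\gg$-stabilizer. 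For the converse, the same correspondence run in reverse shows that scalar extension of a taut couple (respectively self-taut) generalized $\D$-flag yields a taut couple (respectively self-taut) semiclosed $\C$-flag whose $\gg_\C$-stabilizer, after imposing the complexified trace conditions, is a complex parabolic whose $\sigma$-fixed subalgebra is the prescribed real subalgebra; by Definition~\ref{defrealp} the latter is a real parabolic. The group-level statement follows by exponentiation, with trace conditions corresponding to determinant conditions on $G$.

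The hard part will be the bookkeeping in the quaternionic cases $\gsl(\infty;\H)$, $\gsp(p,\infty)$, $\gsp(\infty,\infty)$ and $\gso^*(2\infty)$, where $V_\C$ has twice the $\C$-dimension of $V$ over $\H$. The delicate step is showing that every $\sigma$-stable semiclosed generalized $\C$-flag preserved by $\gp$ is automatically $j$-stable: this ultimately rests on the fact that $\gp$ commutes with the right $\H$-action on $V_\C$, which forces each $\sigma$-stable subspace in the flag to be $j$-stable and hence to descend to a genuine $\H$-subspace of $V$. A secondary care point is the $\gso$ ambiguity recalled above, where one must check that the resulting self-taut $\D$-flag does not depend on which of the three complex flags one starts from.
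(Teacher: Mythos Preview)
The paper does not prove this theorem. It is introduced with the sentence ``The main result of \cite{DCPW} is'' and then simply stated; the proof lives entirely in the cited paper of Dan-Cohen, Penkov and Wolf. So there is no in-paper argument to compare your proposal against.

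That said, your strategy---complexify, invoke the complex classification of Theorems~\ref{self-norm-cpx-parab}--\ref{gen-cpx-parab}, then descend the flag data along the antilinear involution $\sigma$---is exactly the natural one and is the line taken in \cite{DCPW}. Two places in your sketch would need real work before this becomes a proof. First, the descent step in the $\D=\C$ case (e.g.\ $\gg=\gsu(*,*)$) is more than ``the decomposition induced by an appropriate real structure'': here $V_\C$ splits as $V\oplus\overline V$ with $\sigma$ interchanging the summands, and one has to check carefully how a $\sigma$-stable taut couple in $(V_\C,W_\C)$ produces a taut couple of $\C$-flags in the original $(V,W)$, including the semiclosedness condition. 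Second, your assertion that ``semiclosedness, tautness, and self-tautness transfer between the two pictures via scalar restriction and extension'' is precisely the content that needs verification; it is not automatic, and in \cite{DCPW} this occupies most of the argument. Your identification of the quaternionic cases and the $\gso$ three-flag ambiguity as the delicate points is accurate.
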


\subsection*{2C. Levi Components and Chevalley Decompositions.}
Now we turn to Levi components of complex parabolic subalgebras,
recalling results from \cite{DP1}, \cite{DP2}, \cite{DC2}, \cite{DP3},
\cite{DC3} and \cite{W6}.  We start with the definition.

\begin{definition}\label{levi}
{\em Let $\gp_\C$ be a locally finite Lie algebra
and $\gr_\C$ its locally solvable radical.  A subalgebra $\gl_\C \subset \gp_\C$ is a
{\em Levi component} if $[\gp_\C,\gp_\C]$ is the semidirect sum
$(\gr_\C \cap [\gp_\C,\gp_\C]) \subsetplus \gl_\C$.}\hfill $\diamondsuit$
\end{definition}

Every finitary Lie algebra has a Levi
component.  Evidently, Levi components are maximal semisimple subalgebras,
but the converse fails for finitary Lie algebras.  In any case,
parabolic subalgebras of our classical Lie algebras $\gg_\C$ have maximal 
semisimple subalgebras, and those are their Levi components.

\begin{definition}\label{standard}
{\em Let $X_\C \subset V_\C$ and $Y_\C \subset W_\C$ be paired subspaces, isotropic in the
orthogonal and symplectic cases.  The subalgebras 
$$
\begin{aligned}
&\ggl(X_\C,Y_\C) \subset \ggl(V_\C,W_\C) \phantom{an}\text{ and }  \gsl(X_\C,Y_\C) \subset \gsl(V_\C,W_\C),\\
&\Lambda \ggl(X_\C,Y_\C) \subset \Lambda \ggl(V_\C,V_\C) \text{ and }
S\ggl(X_\C,Y_\C) \subset S\ggl(V_\C,V_\C)
\end{aligned}
$$ 
are called {\em standard}.}\hfill $\diamondsuit$
\end{definition}

\begin{proposition}\label{struc-levi}
A subalgebra $\gl_\C \subset \gg_\C$ is the Levi
component of a parabolic subalgebra of $\gg_\C$
if and only if it is the direct sum of standard special linear
subalgebras and at most one subalgebra $\Lambda \ggl(X_\C,Y_\C)$ in the orthogonal case,
at most one subalgebra $S\ggl(X_\C,Y_\C)$ in the symplectic case.
\end{proposition}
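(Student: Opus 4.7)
The plan is to reduce first to the case of self-normalizing parabolics using Theorem \ref{gen-cpx-parab}, and then to read off the Levi component from the flag data provided by Theorems \ref{self-norm-cpx-parab} and \ref{self-norm-cpx-parab-so}. The reduction rests on the observation that an infinite trace is a linear functional on the reductive quotient of a parabolic and vanishes on any semisimple subalgebra; therefore imposing a finite collection of infinite trace relations on a self-normalizing $\widetilde{\gp}_\C$ only alters the abelian part of its reductive quotient and leaves any Levi component unchanged. It thus suffices to prove the proposition when the parabolic is self-normalizing.

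For necessity in the $\gsl(V_\C,W_\C)$ or $\ggl(V_\C,W_\C)$ case, let $\widetilde{\gp}_\C$ be the stabilizer of a taut couple $(\cF,{'\cF})$ of semiclosed generalized flags. Tautness and semiclosedness set up a bijection between IPS pairs $(F^-,F^+)$ of $\cF$ and IPS pairs $({'F^-},{'F^+})$ of ${'\cF}$ via the annihilator $F \mapsto F^\perp$, under which the graded quotients $F^+/F^-$ and ${'F^+}/{'F^-}$ pair nondegenerately. One then argues that the locally solvable radical $\gr_\C$ consists of those elements of $\widetilde{\gp}_\C$ which act by a scalar on every graded quotient, and that the induced action of $\widetilde{\gp}_\C$ on $\bigoplus (F^+/F^-)$ identifies $\widetilde{\gp}_\C/\gr_\C$ with the direct sum of $\bigoplus \gsl(F^+/F^-,{'F^+}/{'F^-})$ and an abelian summand. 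A Levi component is then a lift of the semisimple part of this quotient, which by Definition \ref{standard} is a direct sum of standard special linear subalgebras.

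In the self-dual cases $\gso(V_\C)$ and $\gsp(V_\C)$, the single self-taut semiclosed flag $\cF$ carries the involution $F \mapsto F^\perp$ on its IPS pairs. IPS pairs swapped by the involution contribute standard $\gsl(X_\C,Y_\C)$ summands to the Levi component exactly as above, after the identification $V_\C = W_\C$. A fixed IPS pair must be of the form $(L,L^\perp)$ with $L$ isotropic, and the central quotient $L^\perp/L$ inherits a nondegenerate symmetric (orthogonal) or antisymmetric (symplectic) form, so its Levi factor is $\Lambda\ggl(L^\perp/L,L^\perp/L)$ or $S\ggl(L^\perp/L,L^\perp/L)$ respectively. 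Because the involution has at most one fixed IPS pair, this central factor occurs at most once. For the converse, given $\gl_\C$ of the prescribed form, one constructs $V_\C$ and $W_\C$ (or only $V_\C$ in the self-dual case) as an ample direct sum of the $X_i$, $Y_i$, and any isotropic complements, orders the summands linearly to assemble a semiclosed (self-)taut generalized flag structure, and invokes Theorem \ref{self-norm-cpx-parab} or \ref{self-norm-cpx-parab-so} to see that its normalizer is a self-normalizing parabolic with Levi component exactly $\gl_\C$.

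The chief obstacle is the handling of non-closed IPS pairs $\{F, F^{\perp\perp}\}$ with $F \ne F^{\perp\perp}$ permitted by the semiclosed hypothesis: the quotient $F^{\perp\perp}/F$ is not paired with any quotient in the companion flag, and one must verify that $\widetilde{\gp}_\C$ acts on it only through its radical, so that no spurious factor is added to the Levi component. Establishing this asymmetry, together with making the direct sum decomposition of $\widetilde{\gp}_\C/\gr_\C$ rigorous across the (possibly countably infinite) collection of IPS pairs in the finitary setting, is where the flag-theoretic machinery developed in \cite{DCPW} must be applied most carefully.
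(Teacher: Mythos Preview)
The paper does not give its own proof of this proposition: it is stated as a result recalled from the references \cite{DC2} and \cite{DC3}, and the more detailed Theorems \ref{glpar} and \ref{sosplevi} that follow (also quoted without proof) are the precise versions from which Proposition \ref{struc-levi} is extracted. So there is no in-paper argument to compare against.

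That said, your sketch is in line with the approach of those references and with the flag description summarized in the paper. The reduction to the self-normalizing case is correct: infinite trace conditions kill only part of the abelian summand of $\gp_{red,\C}$ and leave $[\gp_{red,\C},\gp_{red,\C}] = \gl_\C$ untouched, cf.\ (\ref{chev}). Your identification of the simple summands of $\gl_\C$ with the $\gsl$'s on nondegenerately paired IPS quotients, and in the self-dual case with at most one middle $\Lambda\ggl$ or $S\ggl$ factor on $L^\perp/L$, matches Theorems \ref{glpar} and \ref{sosplevi} exactly.

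Two places where your sketch is thinner than what the cited proofs actually require. First, the IPS pairs contributing to $\gl_\C$ are precisely those with closed predecessor $F' = (F')^{\perp\perp}$ and $\dim F''/F' > 1$, as in (\ref{J}); one-dimensional quotients feed the toral part and the non-closed pairs you flag at the end feed the radical. You name this obstacle but do not resolve it. Second, your converse (``order the summands linearly and assemble a flag'') glosses over the real work: the paper outlines, between Theorems \ref{glpar} and \ref{sosplevi}, the construction from \cite{DC3} of the subspaces $U_{j,\C}$ via iterated annihilators and the resulting minimal and maximal taut couples $(\cF_{min},{'\cF}_{min})$ and $(\cF_{max},{'\cF}_{max})$. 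Producing a semiclosed taut couple with prescribed IPS quotients is the substantive step, and ``ample direct sum'' does not capture it.
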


The occurrence of ``at most one subalgebra'' in Proposition \ref{struc-levi}
is analogous to the finite dimensional case, where it is seen by deleting some
simple root nodes from a Dynkin diagram.
\smallskip

Let $\gp_\C$ be the parabolic subalgebra of $\gsl(V_\C, W_\C)$ or $\ggl(V_\C, W_\C)$ defined
by the taut couple $(\cF, {'\cF})$ of semiclosed generalized flags.  Denote
\begin{equation}\label{J}
\begin{aligned}
&J = \{(F',F'') \text{ IPS pair in } \cF \mid F' = (F')^{\perp\perp}
	\text{ and } \dim F''/F' > 1\},\\
&'J = \{('F',{'F''}) \text{ IPS pair in } {'\cF} \mid {'F}' = ('F')^{\perp\perp},
         \dim {'F''}/{'F'} > 1\}.
\end{aligned}
\end{equation}
Since $V_\C \times W_\C \to \C$ is nondegenerate the sets $J$ and $'J$ are in
one to one correspondence by: $(F''/F') \times ({'F''}/{'F'}) \to \C$ is
nondegenerate.  We use this to identify $J$ with $J'$, and we write
$(F_j',F_j'')$ and $('F_j',{'F_j''})$ treating $J$ as an index set.

\begin{theorem}\label{glpar}
Let $\gp_\C$ be the parabolic subalgebra of $\gsl(V_\C, W_\C)$ or $\ggl(V_\C, W_\C)$ defined
by the taut couple $\cF$ and $'\cF$ of semiclosed generalized flags.  For each
$j \in J$ choose a subspace $X_{j,\C} \subset V_\C$ and a subspace $Y_{j,\C} \subset W_\C$
such that $F_j'' = X_{j,\C} + F_j'$ and $'F_j'' = Y_{j,\C} + {'F_j}'$
Then $\bigoplus_{j \in J}\, \gsl(X_{j,\C},Y_{j,\C})$ is a Levi component of $\gp_\C$. The
inclusion relations of $\cF$ and $'\cF$ induce a total order on $J$.

Conversely, if $\gl_\C$ is a Levi component of $\gp_\C$ then there exist subspaces
$X_{j,\C} \subset V_\C$ and $Y_{j,\C} \subset W_\C$ such that 
$\gl = \bigoplus_{j \in J}\, \gsl(X_{j,\C},Y_{j,\C})$.
\end{theorem}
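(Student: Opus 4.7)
My plan is to prove the forward direction by explicit construction, verifying the Levi axiom through the induced action on successive quotients of the flag, and then to deduce the converse from Proposition \ref{struc-levi}.

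For the forward direction, I first check that $\gl_\C := \bigoplus_{j \in J} \gsl(X_{j,\C}, Y_{j,\C})$ sits inside $\gp_\C$. Choose the splittings so that $X_{j,\C} \perp {'F_j'}$ and $F_j' \perp Y_{j,\C}$; this is possible because $F_j' = (F_j')^{\perp\perp}$ forces the induced pairing $F_j''/F_j' \times {'F_j''}/{'F_j'} \to \C$ to be nondegenerate. A rank-one operator $v \otimes w$ with $v \in X_{j,\C}$ and $w \in Y_{j,\C}$ then annihilates every $F \in \cF$ contained in $F_j'$ (since $\langle w, F_j' \rangle = 0$) and sends every $F \supseteq F_j''$ into $F_j'' \subseteq F$. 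By the total ordering of $\cF$ together with the IPS condition on $(F_j', F_j'')$, every $F \in \cF$ falls into one of these two cases, so $\gsl(X_{j,\C}, Y_{j,\C})$ stabilizes $\cF$ and, symmetrically, ${'\cF}$. Hence $\gl_\C \subset \gp_\C$. Now define $\Phi : \gp_\C \to \prod_{j \in J} \ggl(F_j''/F_j')$ by the induced action on each quotient. The restriction $\Phi|_{\gl_\C}$ embeds $\gl_\C$ diagonally as $\bigoplus_j \gsl(X_{j,\C}, Y_{j,\C})$, and because each summand is perfect, $\Phi$ sends $[\gp_\C, \gp_\C]$ onto this direct sum. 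The key technical point is that $\ker \Phi$ is locally solvable: IPS pairs not indexed by $J$ are either one-dimensional ($\dim F''/F' = 1$) or come from semiclosed pairs with $F' \ne (F')^{\perp\perp}$, where the semiclosed hypothesis that $\{F, F^{\perp\perp}\}$ is always an IPS pair produces a finite filtration of $\ker \Phi$ with abelian subquotients, while the off-diagonal blocks between distinct IPS quotients are nilpotent by the total ordering of $\cF$. Combining, $(\gr_\C \cap [\gp_\C, \gp_\C]) + \gl_\C = [\gp_\C, \gp_\C]$ as a direct sum, so $\gl_\C$ is a Levi component. The order on $J$ defined by $j < k$ iff $F_j'' \subseteq F_k'$ is total because $\cF$ is, and matches the analogous order on ${'J}$ under the taut-couple identification.

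For the converse, Proposition \ref{struc-levi} already forces any Levi component $\gl_\C$ of $\gp_\C$ to be a direct sum of standard special linear subalgebras. Composing with $\Phi$ and using that the only successive quotients $F''/F'$ on which $\gp_\C$ admits a nontrivial semisimple image are those for $j \in J$, each simple summand of $\gl_\C$ must project isomorphically onto exactly one $\gsl(F_j''/F_j')$ and act trivially on all others; reading off the image on the $V_\C$-side and its annihilator-dual on the $W_\C$-side recovers splittings $X_{j,\C} \subset F_j''$ and $Y_{j,\C} \subset {'F_j''}$ with the asserted property.

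The main obstacle will be verifying local solvability of $\ker \Phi$ in the infinite-dimensional setting: Jordan--H\"older is unavailable, and the non-closed semiclosed IPS pairs force a delicate finite filtration whose abelian subquotients must be extracted using precisely the semiclosed hypothesis on $\cF$ and ${'\cF}$.
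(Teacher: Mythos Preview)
The paper does not give a proof of Theorem~\ref{glpar}; it is stated in Section~2C as one of the results ``recall[ed] \dots\ from \cite{DP1}, \cite{DP2}, \cite{DC2}, \cite{DP3}, \cite{DC3} and \cite{W6}.'' So there is no in-text argument to compare your attempt against. That said, your overall strategy---pass to the block-diagonal quotient $\Phi:\gp_\C \to \prod_{j\in J}\ggl(F_j''/F_j')$, identify $\gl_\C$ with the traceless part of the image, and show that $\ker\Phi$ is contained in the locally solvable radical---is exactly the line taken in the Dan-Cohen--Penkov references, and your inclusion $\gl_\C\subset\gp_\C$ via rank-one operators is clean.

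There is, however, a genuine gap in your treatment of $\ker\Phi$. You claim the semiclosed hypothesis ``produces a finite filtration of $\ker\Phi$ with abelian subquotients,'' but the set of IPS pairs outside $J$ can be infinite (even uncountable), so no global finite filtration exists. What is needed is \emph{local} solvability: any finite subset of $\ker\Phi$ lives in some $\ggl(V_n,W_n)$ with $V_n,W_n$ finite dimensional, and one argues there. More to the point, you have not actually used the key consequence of semiclosedness for the non-$J$ pairs. When $(F',F'')$ is an IPS pair with $F'\ne (F')^{\perp\perp}$, semiclosedness forces $F''=(F')^{\perp\perp}$, hence $(F')^\perp=((F')^{\perp\perp})^\perp=(F'')^\perp$; the dual gap in $W_\C$ collapses to zero, so the induced pairing on $F''/F'$ is identically zero and no rank-one operator $v\otimes w$ in $\gp_\C$ can act semisimply on that quotient. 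This is what kills any would-be semisimple contribution from those blocks, and it is the step your sketch gestures at but does not carry out.

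For the converse, invoking Proposition~\ref{struc-levi} is right, but the assertion that each standard simple summand ``must project isomorphically onto exactly one $\gsl(F_j''/F_j')$'' needs a line: a standard $\gsl(X,Y)\subset\gp_\C$ has $X$ $\gsl(X,Y)$-irreducible, so $X$ cannot straddle two distinct IPS gaps without some element of $\gsl(X,Y)$ moving vectors strictly downward in the flag, which would put that element in the nilradical and contradict semisimplicity.
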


Now the idea of finite matrices with blocks down the diagonal suggests the
construction of $\gp_\C$ from the totally ordered set $J$ and the 
Lie algebra direct sum
$\gl_\C = \bigoplus_{j \in J}\, \gsl(X_{j,\C},Y_{j,\C})$ of standard special linear 
algebras.  We outline the idea of the construction; see \cite{DC3}.  First, 
$\langle X_{j,\C}, Y_{j',\C}\rangle = 0$ for $j \ne j'$ because the
$\gl_j = \gsl(X_{j,\C},Y_{j,\C})$ commute with each other.  Define
$U_{j,\C} := (( \bigoplus_{k \leqq j}\, X_{k,\C})^\perp \oplus Y_{j,\C})^\perp$. 
Then one proves $U_{j,\C} = ((U_{j,\C} \oplus X_{j,\C})^\perp \oplus Y_{j,\C})^\perp$.
From that, one shows that there is a unique semiclosed generalized flag 
$\cF_{min}$ in $V_\C$ with the same stabilizer as the set 
$\{U_{j,\C}, U_{j,\C} \oplus X_{j,\C}\,  |\,  j \in J\}$. 
One constructs similar subspaces $'U_{j,\C} \subset W_\C$ and shows that there is a 
unique semiclosed generalized flag $'\cF_{min}$ in $W_\C$ with the same stabilizer 
as the set $\{'U_{j,\C}, {'U}_{j,\C} \oplus Y_{j,\C} \, |\,  j \in J\}$.  In fact
$(\cF_{min} , {'\cF}_{min})$ is the minimal taut couple with IPS
pairs $U_{j,\C} \subset (U_{j,\C} \oplus X_{j,\C})$ in $\cF_{min}$ and 
$(U_{j,\C} \oplus X_{j,\C})^\perp \subset ((U_{j,\C} \oplus X_{j,\C})^\perp \oplus Y_{j,\C})$ in ${'\cF}_{min}$ 
for $j \in J$.  If $(\cF_{max}, {'\cF}_{max})$ is maximal among the taut couples of
semiclosed generalized flags with IPS pairs $U_{j,\C} \subset (U_{j,\C} \oplus X_{j,\C})$ 
in $\cF_{max}$
and $(U_{j,\C} \oplus X_{j,\C})^\perp \subset ((U_{j,\C} \oplus X_{j,\C})^\perp \oplus Y_{j,\C})$ in 
${'\cF}_{max}$ then the corresponding parabolic $\gp_\C$ has Levi component $\gl_\C$.
\smallskip

The situation is essentially the same for Levi components of
parabolic subalgebras of $\gg_\C = \gso(\infty;\C) \text{ or }
\gsp(\infty;\C)$, except that we modify the definition (\ref{J}) of $J$ to add
the condition that $F''$ be isotropic, and we add the orientation aspect
of the $\gso$ case.

\begin{theorem}\label{sosplevi}
Let $\gp_\C$ be the parabolic subalgebra of $\gg_\C = \gso(V_\C)$ or $\gsp(V_\C)$,
defined by the self--taut semiclosed generalized flag $\cF$.  Let 
$\widetilde{F}$ be the union of all subspaces $F''$ in IPS pairs 
$(F',F'')$ of $\cF$ for which $F''$ is isotropic.  Let $\widetilde{'F}$ be the 
intersection of all subspaces $F'$ in IPS pairs for which $F'$ is
closed ($F' = (F')^{\perp\perp}$) and coisotropic.
Then $\gl_\C$ is a Levi component of $\gp_\C$ if and
only if there are isotropic subspaces $X_{j,\C}, Y_{j,\C}$ in $V_\C$ such that
$$
\text{$F''_j =F'_j + X_{j,\C}$ and ${'F''_j} ={'F_j} +Y_{j,\C}$ for every $j \in J$}
$$
and a subspace $Z_\C$ in $V_\C$ such that
$\widetilde{F} = Z_\C + \widetilde{'F}$,
where $Z_\C = 0$ in case $\gg_\C = \gso(V_\C)$ and 
$\dim \widetilde{F}/\widetilde{'F} \leqq 2$, such that 
$$
\begin{aligned}
&\gl_\C = \gsp(Z_\C) \oplus {\bigoplus}_{j \in J}\ \gsl(X_{j,\C},Y_{j,\C}) \text{ if }
	\gg_\C = \gsp(V_\C),\\
&\gl_\C = \gso(Z_\C) \oplus {\bigoplus}_{j \in J}\ \gsl(X_{j,\C},Y_{j,\C}) \text{ if }
	\gg_\C = \gso(V_\C).
\end{aligned}
$$
Further, the inclusion relations of $\cF$ induce a 
total order on $J$ which leads to a construction of $\gp_\C$ from $\gl_\C$.
\end{theorem}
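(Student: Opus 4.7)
\textbf{Proof plan for Theorem \ref{sosplevi}.} The strategy is to reduce the $\gso/\gsp$ case to the $\gsl$--case already handled by Theorem \ref{glpar}, by exploiting the built-in duality $F\leftrightarrow F^\perp$ in a self-taut flag. Recall that every $F\in\cF$ is either isotropic or coisotropic. Thus the IPS pairs of $\cF$ split into three kinds: pairs with both terms isotropic, pairs with both terms coisotropic, and at most one ``crossing'' pair that straddles the middle. Self-tautness gives a canonical bijection between isotropic pairs and coisotropic pairs, namely $(F',F'')\mapsto ((F'')^\perp,(F')^\perp)$, and this bijection is precisely the parametrization by $j\in J$. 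The subspaces $\widetilde F$ and $\widetilde{'F}$ bracket the ``middle'' piece, on which the ambient form descends to a nondegenerate bilinear form of the appropriate symmetry; the algebra of that form is where the residual summand $\gso(Z_\C)$ or $\gsp(Z_\C)$ will live.

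The forward direction (every $\gl_\C$ of the stated form is a Levi component) is constructive. Choose, for each $j\in J$, a complement $X_{j,\C}$ to $F'_j$ in $F''_j$ and a complement $Y_{j,\C}$ to $(F''_j)^\perp$ in $(F'_j)^\perp$; choose a complement $Z_\C$ to $\widetilde{'F}$ in $\widetilde F$ on which the induced form is nondegenerate (allowing $Z_\C=0$). Then
\[
\langle X_{j,\C},X_{k,\C}\rangle=\langle Y_{j,\C},Y_{k,\C}\rangle=0,\qquad \langle X_{j,\C},Y_{k,\C}\rangle=0\text{ for }j\ne k,
\]
so that the $\gsl(X_{j,\C},Y_{j,\C})$ commute pairwise and with $\gso(Z_\C)$ or $\gsp(Z_\C)$; each summand is stable under the involution cutting out $\gg_\C$ from $\ggl(V_\C,V_\C)$. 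Applying Theorem \ref{glpar} to the taut couple consisting of the isotropic part of $\cF$ and its perp-dual coisotropic part, and intersecting the resulting $\ggl$--Levi with $\gg_\C$, gives that the direct sum in the statement is a Levi component of $\gp_\C$.

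The converse uses Proposition \ref{struc-levi}: any Levi component $\gl_\C$ of $\gp_\C$ is a direct sum of standard special linear subalgebras and at most one $\Lambda\ggl$ or $S\ggl$ summand. Each standard $\gsl(X,Y)$ factor must be compatible with stabilizing $\cF$ and with the form; this forces $X$ and $Y$ to be complements to consecutive closed pieces $F'_j\subset F''_j$ on the isotropic side and $(F''_j)^\perp\subset(F'_j)^\perp$ on the coisotropic side, realizing $J$ as an index set. The exceptional $\gso(Z_\C)$ or $\gsp(Z_\C)$ summand is pinned down by the nondegenerate quotient on $\widetilde F/\widetilde{'F}$, giving the identity $\widetilde F=Z_\C+\widetilde{'F}$. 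The total order on $J$ is inherited from the inclusion order on $\cF$, and the passage from $\gl_\C$ back to a compatible $\gp_\C$ follows the template sketched in the paragraph after Theorem \ref{glpar}: build nested subspaces $U_{j,\C}$ from partial sums of the $X_{k,\C}$, take perps to produce their coisotropic mates, adjoin $\widetilde{'F}$ and $Z_\C$, and take the unique self-taut semiclosed generalized flag having these as IPS data.

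The main technical obstacle is the orthogonal case, where Theorem \ref{self-norm-cpx-parab-so} warns that a self-normalizing parabolic can correspond to three distinct self-taut flags precisely when $\dim_\C L^\perp/L=2$ for some isotropic $L\in\cF$. This must be reconciled with the choice of $Z_\C$: when $\dim\widetilde F/\widetilde{'F}=2$ one can legitimately take either $Z_\C$ of dimension two (giving a trivial $\gso(Z_\C)$ which is absorbed since $\gso(2)$ is abelian, not semisimple) or $Z_\C=0$ with two extra one-dimensional isotropic $X_j$'s absorbed into the $\gsl$--part. Checking that these alternatives give the \emph{same} Levi component (up to the ambiguity already recorded in Theorem \ref{self-norm-cpx-parab-so}), and verifying that the $Z_\C=0$ hypothesis in the theorem statement precisely handles this degeneracy, is the delicate point; the remainder of the proof is a careful translation of the $\gsl/\ggl$ argument via the perp-duality reduction.
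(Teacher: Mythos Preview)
The paper does not prove Theorem \ref{sosplevi}; it is stated in Section 2C as background material ``recalling results from \cite{DP1}, \cite{DP2}, \cite{DC2}, \cite{DP3}, \cite{DC3} and \cite{W6}.'' There is therefore no proof in the paper to compare your proposal against. The actual argument lives in Dan-Cohen and Penkov's papers, principally \cite{DC3}.

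That said, your outline follows the right architecture --- split the IPS pairs into isotropic, coisotropic, and a possible middle block; use the perp involution to pair isotropic with coisotropic; invoke Proposition \ref{struc-levi} for the shape of the answer --- and this is indeed how the cited references proceed. Two places in your sketch would need real work to become a proof. First, the step ``apply Theorem \ref{glpar} to the taut couple \ldots\ and intersect the resulting $\ggl$--Levi with $\gg_\C$'' is not valid as stated: the intersection of a Levi component of a parabolic in $\ggl(V_\C,V_\C)$ with $\gso(V_\C)$ or $\gsp(V_\C)$ is not automatically a Levi component of the intersected parabolic, and in any case the $\gsl(X_{j,\C},Y_{j,\C})$ summands sit inside $\gg_\C$ via the maps $\Lambda$ or $S$, not via intersection. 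You need to argue directly that your candidate is a maximal semisimple subalgebra of $\gp_\C$ (equivalently, that it complements the radical inside $[\gp_\C,\gp_\C]$). Second, your treatment of the orthogonal $\dim \widetilde F/\widetilde{'F}\leqq 2$ exception conflates two different phenomena: the abelianness of $\gso(2)$ (which forces $Z_\C=0$ in the Levi decomposition) and the three-flag ambiguity of Theorem \ref{self-norm-cpx-parab-so} (which concerns the parabolic, not its Levi); these should be handled separately rather than as ``alternatives giving the same Levi component.''
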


Next we describe the Chevalley decomposition for parabolic subalgebras,
following \cite{DC2}.
\smallskip

Let $\gp_\C$ be a locally finite linear Lie algebra, in our case a subalgebra 
of $\ggl(\infty,\C)$.  Every element $\xi \in \gp_\C$ has a Jordan canonical form,
yielding a decomposition $\xi = \xi_{ss} + \xi_{nil}$ into semisimple and
nilpotent parts.  The algebra $\gp_\C$ is {\em splittable} if it contains the 
semisimple and the nilpotent parts of each of its elements.  Note that
$\xi_{ss}$ and $\xi_{nil}$ are polynomials in $\xi$; this follows from the
finite dimensional fact.  In particular, if $X_\C$ is any $\xi$--invariant
subspace of $V_\C$ then it is invariant under both $\xi_{ss}$ and $\xi_{nil}$.
\smallskip

Conversely, parabolic subalgebras (and many others) of our classical 
Lie algebras $\gg_\C$ are splittable.
\smallskip

The {\em linear nilradical} of a subalgebra $\gp_\C \subset \gg_\C$ is the set 
$\gp_{nil,\C}$ of all nilpotent elements of the locally solvable radical $\gr_\C$
of $\gp_\C$.  It is a locally nilpotent ideal in $\gp_\C$ and satisfies
$\gp_{nil,\C} \cap [\gp_\C, \gp_\C] = \gr_\C \cap [\gp_\C, \gp_\C]$.
\smallskip

If $\gp_\C$ is splittable then it has a well defined maximal locally reductive 
subalgebra $\gp_{red,\C}$.  This means that $\gp_{red,\C}$ is an increasing union of
finite dimensional reductive Lie algebras, each reductive in the next.
In particular $\gp_{red,\C}$ maps isomorphically under the projection 
$\gp_\C \to \gp_\C/\gp_{nil,\C}$.  That gives a semidirect sum decomposition
$\gp_\C = \gp_{nil,\C} \subsetplus \gp_{red,\C}$ analogous to the Chevalley
decomposition for finite dimensional algebraic Lie algebras.  Also, here, 
\begin{equation}\label{chev}
\gp_{red,\C} = \gl_\C \subsetplus\gt_\C  \quad \text{ and } \quad [\gp_{red,\C},\gp_{red,\C}] = \gl_\C
\end{equation}
where $\gt_\C$ is a toral subalgebra and $\gl_\C$ is the Levi component of $\gp_\C$.
A glance at $\gu(\infty)$ or $\ggl(\infty;\C)$ shows that the semidirect 
sum decomposition of $\gp_{red,\C}$ need not be direct.
\smallskip

Now we turn to Levi components and Chevalley decompositions for
real parabolic subalgebras in the real classical Lie algebras.  
\smallskip

Let $\gg$ be a real form of a classical locally finite complex simple 
Lie algebra $\gg_\C$.  Consider a real parabolic subalgebra $\gp$.  It
has form $\gp = \gp_\C \cap \gg$ where its complexification $\gp_\C$
is parabolic in $\gg_\C$.  Let $\tau$ denote complex conjugation of
$\gg_\C$ over $\gg$.  Then the locally solvable radical $\gr_\C$ of
$\gp_\C$ is $\tau$--stable because $\gr_\C + \tau\gr_\C$ is a locally
solvable ideal, so the locally solvable radical $\gr$ of $\gp$ is
a real form of $\gr_\C$.  Similarly the linear nilradical $\gn$
of $\gp$ is a real form of the linear nilradical $\gn_\C$ of $\gg_\C$.
\smallskip

Let $\gl$ be a maximal semisimple subalgebra of $\gp$.  Its
complexification $\gl_\C$ is a maximal semisimple subalgebra, hence a Levi
component, of $\gp_\C$.  Thus $[\gp_\C,\gp_\C]$ is the semidirect
sum $(\gr_\C \cap [\gp_\C,\gp_\C]) \subsetplus \gl_\C$.  The elements of
this formula all are $\tau$--stable, so we have proved

\begin{proposition}\label{real-levi-1}
The Levi components of $\gp$ are real forms of the Levi components
of $\gp_\C$.
\end{proposition}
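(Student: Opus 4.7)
The plan is to show that if $\gl$ is a Levi component of $\gp$, then its complexification $\gl_\C$ is a Levi component of $\gp_\C$, by complexifying the defining semidirect sum decomposition of $[\gp,\gp]$. The paragraph immediately preceding the proposition already supplies the main prerequisite: the locally solvable radical $\gr$ of $\gp$ is a real form of the locally solvable radical $\gr_\C$ of $\gp_\C$, because $\gr_\C + \tau\gr_\C$ is a locally solvable ideal in $\gp_\C$ and hence equals $\gr_\C$ by maximality, so $\gr_\C$ is $\tau$-stable with $\tau$-fixed points $\gr$.

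Starting from the real Levi decomposition $[\gp,\gp] = (\gr \cap [\gp,\gp]) \subsetplus \gl$ applied to the real algebra $\gp$ (Definition \ref{levi}), I would complexify term by term. Complexification commutes with bracket formation, so $[\gp,\gp]_\C = [\gp_\C,\gp_\C]$; and $\gl_\C$ is semisimple because $\gl$ is, since complexification preserves semisimplicity for locally finite Lie algebras. The one step that needs a small argument is the identity
\[
(\gr \cap [\gp,\gp])_\C \;=\; \gr_\C \cap [\gp_\C,\gp_\C].
\]
For this I would use that $\gr_\C$ and $[\gp_\C,\gp_\C]$ are both $\tau$-stable with real forms $\gr$ and $[\gp,\gp]$ respectively: any element $z \in \gr_\C \cap [\gp_\C,\gp_\C]$, decomposed as $z = x + iy$ with $x, y$ its $\tau$-invariant and $\tau$-anti-invariant parts, then has $x, y \in \gr \cap [\gp,\gp]$. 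Complexifying the real decomposition yields
\[
[\gp_\C,\gp_\C] \;=\; (\gr_\C \cap [\gp_\C,\gp_\C]) \subsetplus \gl_\C,
\]
which exhibits $\gl_\C$ as a Levi component of $\gp_\C$, so $\gl$ is a real form of $\gl_\C$.

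The main, and essentially only, technical obstacle is the intersection identity above; it is a routine consequence of the fact that for a $\tau$-stable subspace $U \subset \gg_\C$ one has $U = U^\tau \oplus i U^\tau$, so intersections of $\tau$-stable subspaces are computed on real forms. No further structural hypothesis on $\gp$ is needed. The statement asserts only one direction --- every real Levi component complexifies to a complex Levi component --- so I do not need to show that every Levi component of $\gp_\C$ is $\tau$-stable (in general it need not be).
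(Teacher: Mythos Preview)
Your proposal is correct. The paper's argument, contained in the paragraph immediately preceding the proposition, takes a slightly different route: it asserts that since $\gl$ is a maximal semisimple subalgebra of $\gp$, its complexification $\gl_\C$ is a maximal semisimple subalgebra of $\gp_\C$, and then invokes the fact (noted after Definition~\ref{levi}) that for parabolics the maximal semisimple subalgebras coincide with the Levi components. You instead complexify the semidirect sum decomposition $[\gp,\gp] = (\gr \cap [\gp,\gp]) \subsetplus \gl$ term by term, using the $\tau$-stability of $\gr_\C$ and $[\gp_\C,\gp_\C]$ established just before the proposition to identify $(\gr \cap [\gp,\gp])_\C$ with $\gr_\C \cap [\gp_\C,\gp_\C]$. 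Your route is arguably more self-contained, since the paper's step ``$\gl$ maximal semisimple in $\gp$ implies $\gl_\C$ maximal semisimple in $\gp_\C$'' is stated without justification and is most cleanly verified by exactly the decomposition argument you give. Both arguments rest on the same $\tau$-stability observations, so the difference is one of presentation rather than substance.
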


\begin{remark}\label{possible-tau}{\rm
If $\gg_\C$ is $\gsl(V_\C,W_\C)$ or $\ggl(V_\C,W_\C)$ as in 
Theorem \ref{glpar} then we have
$\gl_\C = \bigoplus_{j \in J}\, \gsl(X_{j,\C},Y_{j,\C})$.
Initially the possibilities for the action of $\tau$ are
\begin{itemize}
\item $\tau$ preserves $\gsl(X_{j,\C},Y_{j,\C})$ with fixed point set 
	$\gsl(X_j,Y_j) \cong \gsl(*;\R)$,
\item $\tau$ preserves $\gsl(X_{j,\C},Y_{j,\C})$ with fixed point set
        $\gsl(X_j,Y_j) \cong \gsl(*;\H)$, 
\item $\tau$ preserves $\gsl(X_{j,\C},Y_{j,\C})$ with fixed point set
        $\gsu(X'_j,X_j'') \cong \gsu(*,*)$ where $X_j = X'_j + X_j''$, and
\item $\tau$ interchanges two summands $\gsl(X_{j,\C},Y_{j,\C})$ and
        $\gsl(X_{j',\C},Y_{j',\C})$ of $\gl_\C$, with fixed point set 
        the diagonal ($\cong \gsl(X_{j,\C},Y_{j,\C})$) of their direct sum.
\end{itemize}
If $\gg_\C = \gso(V_\C)$ as in Theorem \ref{sosplevi}, $\gl_\C$ can also have a
summand $\gso(Z_\C)$, or if $\gg_\C = \gsp(V_\C)$ it can also have a summand
$\gsp(V_\C)$.  Except when $A_4 = D_3$ occurs, these additional summands must
be $\tau$--stable, resulting in fixed point sets
\begin{itemize}
\item when $\gg_\C = \gso(V_\C)$: $\gso(Z_\C)^\tau$ is $\gso(*,*)$ or $\gso^*(2\infty)$,
\item when $\gg_\C = \gsp(V_\C)$: $\gsp(Z_\C)^\tau$ is $\gsp(*,*)$ or $\gsp(*;\R)$.
\end{itemize}
And $A_4 = D_3$ cases will not cause problems. \hfill$\diamondsuit$
}
\end{remark}

\section{Parabolics Defined by Closed Flags.}\label{sec3}
\setcounter{equation}{0}

A semiclosed generalized flag $\cF = \{F_\alpha\}_{\alpha \in A}$ is
{\em closed} if all successors in the generalized flag are closed
i.e. if $F''_\alpha = (F''_\alpha)^{\perp\perp}$ for each
immediate predecessor successor (IPS) pair $(F_\alpha',F_\alpha'')$
in $\cF$.  If a complex parabolic $\gp_\C$ is defined by 
a taut couple of closed generalized flags, or by a self dual closed
generalized flag, the we say that $\gp_\C$ is 
{\em flag-closed}.  We say that a real parabolic subalgebra 
$\gp \subset \gg$
is {\em flag-closed} if it is a real form of a flag-closed parabolic subalgebra
$\gp_\C \subset \gg_\C$.  We say ``flag-closed'' for parabolics in
order to avoid confusion later with topological closure.
Theorems 5.6 and 6.6 in the paper \cite{DC2} of E. Dan-Cohen and I. Penkov
tell us

\begin{proposition}\label{closed-orthocomp} 
Let $\gp$ be a parabolic subalgebra of $\gg$ and let $\gn$
denote its linear nilradical.  If $\gp$ is flag-closed, then
$\gp = \gn^\perp$ relative to the bilinear form $(x,y) = \trace(xy)$
on $\gg$.
\end{proposition}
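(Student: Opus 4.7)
The plan is to complexify the problem: the form $(x,y) = \trace(xy)$ on $\gg$ is the restriction of its counterpart on $\gg_\C$ and $\gn_\C$ is the complexification of $\gn$, so it suffices to prove $\gp_\C = \gn_\C^\perp$. I would realize $\gp_\C$ as the stabilizer of a taut couple $(\cF, {'\cF})$ of closed generalized flags (Theorem~\ref{self-norm-cpx-parab}), or as the stabilizer of a single self-taut closed flag in the orthogonal or symplectic cases (Theorem~\ref{self-norm-cpx-parab-so}); the key structural input is that $\gn_\C$ then consists exactly of the operators in $\gg_\C$ that send every $F''_\alpha$ into its predecessor $F'_\alpha$ in each IPS pair $(F'_\alpha,F''_\alpha)$ of $\cF$, together with the analogous condition on ${'\cF}$.

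For the inclusion $\gp_\C \subseteq \gn_\C^\perp$, I take $x \in \gp_\C$ and $y \in \gn_\C$. Since $x$ preserves $\cF$ and $y$ strictly lowers it at every IPS pair, the finite-rank product $yx$ sends each $F''_\alpha$ into $F'_\alpha$. Its image is finite-dimensional and hence contained in some $F''_\alpha$, so iterating $yx$ walks that image strictly down the induced finite portion of the flag; thus $yx$ is nilpotent and $\trace(xy) = \trace(yx) = 0$.

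For the reverse inclusion, suppose $z \in \gg_\C \setminus \gp_\C$, so $z$ fails to stabilize some member of $\cF$ (the ${'\cF}$ case is symmetric). There is then an IPS pair $(F', F'')$ of $\cF$ and a vector $v \in F''$ with $z(v) \notin F''$. Closedness of $\cF$ gives $F'' = (F'')^{\perp\perp}$, so I pick $w \in (F'')^\perp \subset W_\C$ with $\langle w, z(v)\rangle \neq 0$ and form the rank-one operator $n = v \otimes w$, $u \mapsto \langle w, u\rangle v$. Because $\langle w, v\rangle = 0$, $n$ is traceless and lies in $\gsl(V_\C, W_\C)$. A direct check shows $n \in \gn_\C$: for any IPS pair $(G', G'')$ of $\cF$, either $G'' \subseteq F''$ (so $w$ annihilates $G''$ and $n(G'') = 0 \subseteq G'$), or $G'' \supsetneq F''$ (so $F'' \subseteq G'$ by the total ordering of $\cF$, and $n(G'') \subseteq \C v \subseteq G'$); the dual condition on ${'\cF}$ is verified symmetrically. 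But then $\trace(zn) = \langle w, z(v)\rangle \neq 0$, contradicting $z \in \gn_\C^\perp$.

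The main obstacle is the $\gso$ and $\gsp$ cases, where $v \otimes w$ by itself is not in $\gg_\C$ and must be replaced by its (anti)symmetrization $v \otimes w \pm w \otimes v$. Here one uses the fact that every member of a self-taut flag is isotropic or coisotropic in order to verify both that the symmetrized operator still lies in $\gn_\C$ and that its trace against $z$ does not accidentally vanish, and the argument splits depending on whether $F''$ is isotropic or coisotropic. A secondary issue is the passage from self-normalizing flag-closed parabolics to the general ones of Theorem~\ref{gen-cpx-parab}: an imposed infinite trace condition shrinks $\gp_\C$ without altering $\gn_\C$, so the proposition is most naturally read at the self-normalizing level, and the general case requires carefully reinterpreting the linear nilradical of the trace-conditioned algebra, a point I would confirm against \cite{DC2}.
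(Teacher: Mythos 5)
You should first know that the paper contains no proof of Proposition \ref{closed-orthocomp} at all: it is quoted directly from Theorems 5.6 and 6.6 of Dan-Cohen and Penkov \cite{DC2} (and credited to E.\ Dan-Cohen in the acknowledgments). So your argument is necessarily a different route, namely a reconstruction of the underlying proof. As a sketch it is essentially sound, and it isolates the right mechanism: the easy inclusion $\gp_\C \subseteq \gn_\C^\perp$ needs only that elements of $\gn_\C$ strictly lower the flag (your nilpotence argument for $yx$ works because $yx$ restricts to an endomorphism of its own finite-dimensional image, on which the induced finite filtration is strictly lowered), while flag-closedness enters exactly where you put it, in producing $w \in (F'')^\perp$ with $\langle w, z(v)\rangle \neq 0$ from $z(v) \notin F'' = (F'')^{\perp\perp}$. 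Two points you leave open in the $\gso/\gsp$ cases in fact resolve favorably: the trace against the (anti)symmetrized operator does not cancel, since $z$ preserving the form gives $(v,zw) = -(zv,w) = \pm(w,zv)$ with the sign that makes the two contributions add; and membership of the symmetrization in $\gn_\C$ reduces to the isotropic case because a form-preserving $z$ stabilizes a closed $F$ if and only if it stabilizes $F^\perp$. The two places where you lean on \cite{DC2} without argument --- the identification of $\gn_\C$ with the flag-lowering operators, and the verification of the dual condition on $'\cF$ via tautness --- are exactly the content of the cited theorems, so your proof is not more self-contained than the paper's citation on those points, but the overall architecture is correct.

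Your final caveat deserves emphasis, because it is sharper than you state it: if an infinite trace condition shrinks $\gp_\C$ to a proper subalgebra $\gp'_\C$ \emph{without} changing the linear nilradical, then $(\gn'_\C)^\perp = \gn_\C^\perp = \gp_\C \supsetneq \gp'_\C$, so the identity $\gp = \gn^\perp$ would actually \emph{fail} for such a parabolic, not merely require reinterpretation. The proposition must therefore be read with ``flag-closed'' meaning that $\gp_\C$ \emph{is} the stabilizer of the closed taut couple (the self-normalizing case), which is consistent with how the paper uses it --- every later application (Proposition \ref{kgp}, Lemma \ref{normproj}, Theorem \ref{kgp-global}) concerns minimal self-normalizing parabolics --- but is worth confirming against the precise hypotheses of \cite[Theorems 5.6, 6.6]{DC2} before relying on the statement in the generality in which it is phrased.
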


Given  $G = \varinjlim G_n$ acting on $V = \varinjlim V_n$ where
the $d_n = \dim V_n$ are increasing and finite, we have Cartan
involutions $\theta_n$ of the groups $G_n$ such that
$\theta_{n+1}|_{G_n} = \theta_n$, and their limit
$\theta = \varinjlim \theta_n$ (in other words
$\theta|_{G_n} = \theta_n$) is the corresponding Cartan involution
of $G$.  It has fixed point set
$$
K = G^\theta = \varinjlim K_n
$$
where $K_n = G_n^{\theta_n}$ is a maximal compact subgroup of
$G_n$.  We refer to $K$ as a {\em maximal lim-compact subgroup}
of $G$, and to $\gk = \gg^\theta$ as a {\em maximal lim-compact 
subalgebra} of $\gg$\,.  Here, for brevity, we write $\theta$ instead of
$d\theta$ for the Lie algebra automorphism induced by $\theta$.

\begin{lemma}\label{max-cpt-conj}
Any two maximal lim-compact subgroups of $G$ are $\Aut(G)$--conjugate.
\end{lemma}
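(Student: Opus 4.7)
The plan is to construct, inductively in $n$, elements $g_n \in G_n$ whose inner automorphisms $\Int(g_n)$ conjugate $\theta_n := \theta|_{G_n}$ to $\theta'_n := \theta'|_{G_n}$, and which are compatible in the sense that $\Int(g_{n+1})|_{G_n} = \Int(g_n)|_{G_n}$. Once such a tower of $g_n$'s exists, $\phi := \varinjlim \Int(g_n)$ is a well-defined element of $\Aut(G)$ with $\phi\,\theta\,\phi^{-1} = \theta'$, and hence $\phi(K) = K'$, which gives the desired $\Aut(G)$-conjugacy.

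At each finite level $n$, both $\theta_n$ and $\theta'_n$ are Cartan involutions of the finite-dimensional reductive group $G_n$, so the classical theorem of Cartan (the finite-dimensional Mostow rigidity) supplies some $g_n \in G_n$ with $\Int(g_n)\,\theta_n\,\Int(g_n)^{-1} = \theta'_n$. For the inductive step, assume $g_n$ has been chosen, and form $\tilde\theta_{n+1} := \Int(g_n)^{-1}\,\theta'_{n+1}\,\Int(g_n)$. This is a Cartan involution of $G_{n+1}$ whose restriction to $G_n$ equals $\theta_n$. Hence $\theta_{n+1}$ and $\tilde\theta_{n+1}$ are two Cartan involutions of $G_{n+1}$ agreeing on $G_n$. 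If I can find $z_{n+1} \in Z_{G_{n+1}}(G_n)$ with $\Int(z_{n+1})\,\theta_{n+1}\,\Int(z_{n+1})^{-1} = \tilde\theta_{n+1}$, then $g_{n+1} := g_n z_{n+1}$ completes the induction: it conjugates $\theta_{n+1}$ to $\theta'_{n+1}$, and its inner automorphism agrees with $\Int(g_n)$ on $G_n$ because $z_{n+1}$ centralizes $G_n$.

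The whole argument thus reduces to the sub-lemma: \emph{two Cartan involutions of $G_{n+1}$ that coincide on $G_n$ are inner-conjugate by an element of $Z_{G_{n+1}}(G_n)$}. To prove it, I would use the finite-dimensional symmetric-space picture. Decompose $\gg_{n+1} = \gg_n \oplus \gm$ where $\gm$ is the $\Ad(G_n)$-invariant complement (via the Killing form of $G_{n+1}$). Both Cartan involutions preserve this decomposition and agree on $\gg_n$; they differ only in how they split $\gm$ into $\pm 1$-eigenspaces. The $\Ad(G_n)$-equivariant extensions of $\theta_n$ to Cartan-type involutions on $\gm$ form a homogeneous space for the identity component of the reductive centralizer $Z_{G_{n+1}}(G_n)$, which acts transitively on them with compact isotropy, producing the desired $z_{n+1}$.

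The main obstacle is the sub-lemma, specifically verifying that the centralizer $Z_{G_{n+1}}(G_n)$ (rather than only the larger $Z_{G_{n+1}}(K_n)$) is big enough to act transitively on extensions of $\theta_n$. This is a finite-dimensional structural fact about reductive pairs and requires a careful case check across the families in Section 2A; it is the technical heart of the lemma. Once this is in hand, compatibility of the $g_n$'s makes $\phi = \varinjlim \Int(g_n)$ an honest element of $\Aut(G)$, and $\phi\,\theta\,\phi^{-1} = \theta'$ holds level by level, hence on all of $G$, yielding $\phi(K) = K'$.
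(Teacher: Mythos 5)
Your overall strategy coincides with the paper's: conjugate the two Cartan involutions level by level, choosing the finite-level conjugations compatibly, and assemble them into an automorphism of the direct limit. The paper phrases this as a recursive choice of $\gamma_n \in \Aut(G_n)$ with $\theta_n' = \gamma_n\cdot\theta_n\cdot\gamma_n^{-1}$ yielding an isomorphism of the direct systems $\{(G_n,\theta_n)\}$ and $\{(G_n,\theta_n')\}$, and then cites \cite[Appendix A]{NRW1993} and \cite{W6.5} to pass to the limit; you make the recursion explicit, and your observation that $g_{n+1} = g_n z_{n+1}$ with $z_{n+1} \in Z_{G_{n+1}}(G_n)$ forces $\Int(g_{n+1})|_{G_n} = \Int(g_n)|_{G_n}$ is exactly the point that makes $\varinjlim \Int(g_n)$ well defined.

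Two gaps remain. First, you begin with two Cartan involutions $\theta, \theta'$ of $G$ that both restrict to Cartan involutions of every $G_n$ in one fixed exhaustion. But two maximal lim-compact subgroups are a priori built from two different expressions $\varinjlim G_n = G = \varinjlim G_n'$ (two exhaustions $\varinjlim V_n = V = \varinjlim V_n'$), and without the paper's preliminary step --- interleaving the two systems into a common cofinal refinement so that one may take $G_n' = G_n$ --- your $\theta'_n := \theta'|_{G_n}$ need not even be defined. Second, your sub-lemma (two Cartan involutions of $G_{n+1}$ agreeing on $G_n$ are conjugate by an element of $Z_{G_{n+1}}(G_n)$) is stated but not proved, and you rightly call it the technical heart. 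It does hold for the classical pairs occurring here: since $G_n$ acts irreducibly on $V_n$ and trivially on a complement, the Cartan involutions of $G_{n+1}$ extending $\theta_n$ are parametrized by the choice of a compatible positive definite (or positive/negative splitting) datum on that complement, and $Z_{G_{n+1}}(G_n)$ --- essentially a general linear, unitary, orthogonal or symplectic group of the complement times a center --- acts transitively on those data. Until that verification is written out, the proof is a correct skeleton with one load-bearing step missing; the paper itself discharges the same step only by citation, so your reduction is faithful to its argument even where it is incomplete.
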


\begin{proof} Given two expressions $\varinjlim G_n = G =
\varinjlim G_n'$, corresponding to $\varinjlim V_n = V =
\varinjlim V_n$, we have an increasing function $f: \N \to \N$ such that
$V_n' \subset V_{f(n)}$.  Thus the two direct limit systems have a common
refinement, and we may assume $V_n' = V_n$ and $G_n' = G_n$.
It suffices now to show that the Cartan involutions
$\theta = \varinjlim \theta_n$ and
$\theta' = \varinjlim \theta_n'$ are conjugate in $\Aut(G)$.
\smallskip

Recursively we assume that $\theta_n$ and $\theta_n'$ are conjugate in
$\Aut(G_n)$, say $\theta_n' = \gamma_n\cdot\theta_n\cdot\gamma_n^{-1}$
for $n > 0$.  This gives an isomorphism between the direct systems
$\{(G_n,\theta_n)\}$ and $\{(G_n,\theta_n')\}$.  As in
\cite[Appendix A]{NRW1993} and \cite{W6.5} this results in
an automorphism of $G$ that conjugates $\theta$ to $\theta'$
in $\Aut(G)$ and sends $K$ to $K'$.
\end{proof}

The Lie algebra $\gg = \gk + \gs$ where $\gk$ is the
$(+1)$--eigenspace of $\theta$ and $\gs$ is the $(-1)$--eigenspace.
The Lie algebra $\gg = \gk + \gs$ where $\gk$ is the
$(+1)$--eigenspace of $\theta$ and $\gs$ is the $(-1)$--eigenspace.
We have just seen that any two choices of $K$ are
conjugate by an automorphism of $G$, so we have considerable freedom
in selecting $\gk$.  Also as  in the finite dimensional
case (and using the same proof), $[\gk,\gk] \subset \gk$,
$[\gk,\gs] \subset \gs$ and $[\gs,\gs] \subset \gk$.

\begin{proposition}\label{kgp}
Let $\gp$ be a flag-closed parabolic subalgebra of $\gg$\,, let $\theta$ be
a Cartan involution, and let $\gg = \gk + \gs$ be the corresponding
Cartan decomposition.  Then $\gg = \gk + \gp$.
\end{proposition}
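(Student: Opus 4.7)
My plan is to reduce the identity $\gg=\gk+\gp$ to its finite-dimensional counterpart along the filtration $\gg=\varinjlim\gg_n$ and then pass to the direct limit.

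First I would use Lemma \ref{max-cpt-conj} to realize $\theta=\varinjlim\theta_n$ compatibly with the filtration, so that $\gk_n:=\gk\cap\gg_n$ is the fixed subalgebra of a Cartan involution $\theta_n$ of the finite-dimensional real reductive Lie algebra $\gg_n$. Set $\gp_n:=\gp\cap\gg_n$. The key observation is that each $X\in\gg_n$ acts on $V$ with support inside $V_n$ and as zero elsewhere, so the stabilization condition $X\cdot F\subseteq F$ for $F\in\cF$ is equivalent to $X\cdot(F\cap V_n)\subseteq F\cap V_n$. Hence $\gp_n$ coincides with the stabilizer in $\gg_n$ of the restricted flag $\{F\cap V_n:F\in\cF\}$, which is a (real) parabolic subalgebra of the finite-dimensional $\gg_n$.

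The classical finite-dimensional real Iwasawa-type decomposition $\gg_n=\gk_n+\gp_n$ then holds for every $n$. Since $\gk=\bigcup_n\gk_n$ and $\gp=\bigcup_n\gp_n$, taking the union yields
\[
\gg \;=\; \bigcup_n \gg_n \;=\; \bigcup_n (\gk_n+\gp_n) \;=\; \gk+\gp,
\]
which is the proposition.

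The main obstacle, and where the flag-closed hypothesis becomes essential, is the claim that $\gp_n$ is an honest parabolic subalgebra of $\gg_n$. A general parabolic of $\gg$ need not be the pure stabilizer of a flag---Theorem \ref{gen-cpx-parab} allows additional infinite-trace conditions---and when such conditions descend to $\gp_n$ they can cut it down to a proper subalgebra of the parabolic stabilizer, invalidating the finite-dimensional Iwasawa identity. Proposition \ref{closed-orthocomp} guarantees that a flag-closed $\gp$ carries no such residual trace conditions (it equals $\gn^\perp$ with respect to the trace form), so the restriction $\gp\cap\gg_n$ is faithfully described by the restricted flag and the direct-limit argument closes. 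This matches the author's warning that for non-flag-closed $\gp$ it is not even clear whether $K$ has an open orbit on $G/P$.
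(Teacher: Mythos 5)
Your reduction to the finite-dimensional Iwasawa decomposition breaks down at its key observation. For $X\in\gg_n$ the condition $X\cdot F\subseteq F$ is \emph{not} equivalent to $X\cdot(F\cap V_n)\subseteq F\cap V_n$: if $F$ contains a vector with components both inside and outside $V_n$, the first condition is strictly stronger. Consequently $\gp_n=\gp\cap\gg_n$ need not be the stabilizer of the restricted flag and need not be parabolic in $\gg_n$ at all --- the paper says as much in the proof of Theorem \ref{lang-alg} (``$P\cap G_n$ need not be parabolic in $G_n$''). Flag-closedness does not rescue this, because it is a Mackey-closure condition $F=F^{\perp\perp}$ and says nothing about how $\cF$ sits relative to the exhaustion $\{V_n\}$; your appeal to Proposition \ref{closed-orthocomp} to rule out residual trace conditions addresses a different (and here irrelevant) obstruction. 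Concretely, in $\gg=\gsl(\infty;\R)$ with the standard exhaustion take $\cF=\{0\subset F\subset V\}$ with $F=\R(v_1+v_3)$; this $F$ is closed, so $\gp$ is flag-closed. Any $X\in\gsl(V_2,W_2)$ kills $v_3$, so $X(v_1+v_3)=Xv_1\in V_2$, and $X\cdot F\subseteq F$ forces $Xv_1=0$; together with tracelessness this leaves only the span of $v_1\otimes w_2$, a one-dimensional nilpotent subalgebra. Hence $\gk_2+\gp_2$ has dimension at most $2<3=\dim\gsl(2;\R)$ and the identity $\gg_n=\gk_n+\gp_n$ fails. Your union argument then only yields $\bigcup_n(\gk_n+\gp_n)\subseteq\gk+\gp$, and to get equality with $\gg$ you would have to prove the finite-level identity for cofinally many $n$, which would require re-adapting the exhaustion to the flag --- not obviously possible for a prescribed $\theta$, and certainly a missing step (for instance when the chain $\cF$ is densely ordered, no finite level is ever ``adapted'').

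The paper's proof avoids the filtration entirely and is where the flag-closed hypothesis genuinely enters: by Proposition \ref{closed-orthocomp}, $\gp=\gn^\perp$ for the trace form. If some nonzero $x$ were orthogonal to $\gk+\gp+\theta\gp$ (for the definite form $\langle\xi,\eta\rangle=-\trace(\xi\cdot\theta\eta)$), then orthogonality to $\theta\gp$ gives $\trace(xy)=0$ for all $y\in\gp\supseteq\gn$, hence $x\in\gn^\perp=\gp$, contradicting $x\perp\gp$. So $\gg=\gk+\gp+\theta\gp$, and writing $x\equiv y+\theta z\pmod{\gk}$ with $y,z\in\gp$ one absorbs the remainder via $\theta z+z\in\gk$ to get $\gg=\gk+\gp$. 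If you want to keep a direct-limit flavor, that orthogonality argument is the ingredient you would need to replace.
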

\begin{proof}  If $\gk + \gp + \theta\gp \ne \gg$ then $\gg$
has nonzero elements $x \in (\gk + \gp + \theta\gp)^\perp$.  Any
such satisfies $x \perp \gn$, so $x \in \gp$, contradicting
$x \in (\gk + \gp + \theta\gp)^\perp$.  We have shown that
$\gg = \gk + \gp + \theta\gp$\,.
 
Let $x \in \gg$.  We want to show $x = 0$ modulo $\gk + \gp$.
Modulo $\gk$ we express $x = y + \theta z$ where $y, z \in \gp$\,.
Then $x - (y-z) = \theta z + z \in \gk$, so $x \in \gk$ modulo
$\gp$.  Now $x = 0$ modulo $\gk + \gp$.
\end{proof}

\begin{lemma} \label{pspan}
If $\gp$ is a flag-closed parabolic subalgebra of $\gg$\,,
and $\gp_{red,\R}$ is a reductive part, then $\gp_{red,\R}$ is
stable under some Cartan involution $\theta$ of $\gg$, and for
that choice of $\theta$ we have
$\gp = (\gp \cap \gk) + (\gp \cap \gs)+ \gn$.
\end{lemma}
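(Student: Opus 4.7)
The plan is to use the Chevalley decomposition from Section~2C (the real analogue of (\ref{chev}), available via Proposition~\ref{real-levi-1}) to write $\gp = \gn \subsetplus \gp_{red,\R}$. The lemma then reduces to two tasks: (a) construct a Cartan involution $\theta$ of $\gg$ under which $\gp_{red,\R}$ is stable, and (b) deduce the claimed decomposition of $\gp$ from such a $\theta$.

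Task (b) is immediate. Once $\theta(\gp_{red,\R}) = \gp_{red,\R}$, the $\pm 1$--eigenspace decomposition of $\theta|_{\gp_{red,\R}}$ yields $\gp_{red,\R} = (\gp_{red,\R}\cap\gk) + (\gp_{red,\R}\cap\gs)$. Both summands lie in $\gp$, hence in $\gp\cap\gk$ and $\gp\cap\gs$ respectively, so combining with $\gp = \gn + \gp_{red,\R}$ gives $\gp \subset \gn + (\gp\cap\gk) + (\gp\cap\gs)$; the reverse inclusion is obvious since each summand already sits inside $\gp$.

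Task (a) is the real content. Using the flag-closed hypothesis and the structural results on Levi components (Theorems~\ref{glpar} and~\ref{sosplevi}, together with Remark~\ref{possible-tau} for real forms), I would write $\gl_\C$ as a direct sum of standard subalgebras $\gsl(X_{j,\C},Y_{j,\C})$ (plus possibly an orthogonal or symplectic block) indexed by IPS pairs $(F_j',F_j'')$ with $F_j' = (F_j')^{\perp\perp}$. Closure of each $F_j'$ permits splittings $F_j'' = X_{j,\C} \oplus F_j'$ that are mutually compatible and respect the conjugation $\tau$ of the real form; equipping $V$ with a positive-definite Hermitian form adapted to these splittings and to $\tau$, and transporting it to $W$ via $\langle\cdot,\cdot\rangle$, defines a Cartan involution $\theta$ of $\gg$ (negative Hermitian adjoint). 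By construction $\theta$ preserves each standard block and hence the Levi $\gl$; choosing the toral summand $\gt$ in $\gp_{red,\R} = \gl \subsetplus \gt$ from operators diagonal in this frame makes $\gt$ likewise $\theta$--stable, yielding $\theta(\gp_{red,\R}) = \gp_{red,\R}$.

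The main obstacle is ensuring that such a Hermitian form can be chosen coherently when the index set $J$ is infinite and when $\tau$ interchanges pairs of summands (the fourth bullet of Remark~\ref{possible-tau}). A cleaner route around this may be a direct-limit argument: exhaust $\gg = \varinjlim \gg_n$ compatibly with the closed flag so that each $\gp \cap \gg_n$ is a finite-dimensional real parabolic, invoke the classical finite-dimensional fact that any reductive part of $\gp\cap\gg_n$ is $\exp(\gn\cap\gg_n)$--conjugate to a $\theta_n$--stable one for some Cartan involution $\theta_n$ of $\gg_n$, and use Lemma~\ref{max-cpt-conj} to choose the $\theta_n$ inductively so that $\theta_{n+1}|_{\gg_n} = \theta_n$. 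Then $\theta = \varinjlim \theta_n$ is a Cartan involution of $\gg$ preserving $\gp_{red,\R} = \varinjlim (\gp\cap\gg_n)_{red,\R}$, and (a) follows.
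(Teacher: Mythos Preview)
The paper states Lemma~\ref{pspan} without proof, so there is no argument to compare against; I will simply evaluate your proposal on its own terms.

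Your Task~(b) is correct and is indeed the trivial part: once $\theta$ stabilizes $\gp_{red,\R}$, the eigenspace decomposition gives the result.

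For Task~(a), your first approach (building a $\theta$ from an adapted hermitian form) is the natural one and is essentially how such statements are proved in finite dimensions; you are right that the bookkeeping across an infinite index set $J$ and across $\tau$--interchanged pairs needs to be made explicit, but there is no conceptual obstruction.

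Your fallback direct--limit argument, however, has real gaps. First, the hypothesis that one can exhaust $\gg$ so that each $\gp\cap\gg_n$ is parabolic in $\gg_n$ is not automatic: the paper itself observes (in the proof of Theorem~\ref{lang-alg}) that ``$P\cap G_n$ need not be parabolic in $G_n$''. The flag--closed condition may allow you to choose a compatible exhaustion, but that is a lemma in its own right, not something you can simply assert. Second, and more seriously, the statement concerns a \emph{given} reductive part $\gp_{red,\R}$, and reductive parts are not unique. Even if each $\gp\cap\gg_n$ is parabolic and you pick a $\theta_n$--stable reductive part of it, there is no reason the limit of those choices recovers the particular $\gp_{red,\R}$ you started with; at best you get \emph{some} $\theta$--stable reductive part, and you would then need a conjugacy argument (by $\exp(\gn)$) to move to the prescribed one---which itself requires care in the infinite--dimensional setting. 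Finally, Lemma~\ref{max-cpt-conj} gives $\Aut(G)$--conjugacy of lim--compact subgroups, not the inductive compatibility $\theta_{n+1}|_{\gg_n}=\theta_n$ that you need; that compatibility has to be arranged by hand at each step, and you have not indicated how the $\theta_n$--stability constraint interacts with that recursion.

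In short: the decomposition step is fine, and the hermitian--form construction is the right idea to flesh out; the direct--limit shortcut, as written, assumes more than it proves.
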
 

The global version of Proposition \ref{kgp} is the main result of this section:
\begin{theorem}\label{kgp-global}
Let $P$ be a flag-closed parabolic subgroup of $G$ and let $K$ be
a maximal lim--compact subgroup of $G$\,.  Then $G = K P$\,.
\end{theorem}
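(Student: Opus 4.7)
The plan is to globalize Proposition \ref{kgp} by reducing to the finite-dimensional case along a compatible direct-limit exhaustion of $G$: an exhaustion by $\theta$-stable finite-dimensional reductive subgroups $G_n$ inside which $P$ cuts out an honest finite-dimensional parabolic $P_n$. The flag-closed hypothesis is precisely what makes such an exhaustion possible. First I would use Lemma \ref{pspan} to arrange that $\gp$ is $\theta$-stable, so that $\gk = \gg^\theta$ satisfies $\gp = (\gp \cap \gk) + (\gp \cap \gs) + \gn$.

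Next I would build an exhaustion $G = \varinjlim G_n$ in which each $G_n$ is finite-dimensional and $\theta$-stable, $K_n := G_n^\theta = K \cap G_n$ is maximal compact in $G_n$, and $P_n := P \cap G_n$ is parabolic in $G_n$. The construction is the natural one: given any finite subset of $V$ (and of $W$ in the two-defining-representation cases), each element lies in an IPS pair $(F', F'')$ of the defining flag $\cF$ (resp.\ $'\cF$); I would take $V_n$ to be a finite-dimensional $\theta$-stable subspace containing the subset whose intersection with these finitely many $F'$ and $F''$ produces a finite flag in $V_n$ (and similarly in $W$). The resulting $G_n \subset G$ is then $\theta$-stable, and $P_n$ is the $G_n$-stabilizer of the resulting finite flag together with the finite specializations of the infinite trace conditions defining $P$ inside its self-normalizing envelope; hence $P_n$ is a parabolic subgroup of $G_n$ in the classical sense. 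The flag-closed hypothesis enters through Proposition \ref{closed-orthocomp}: the identity $\gp = \gn^\perp$ descends stage by stage to $\gp \cap \gg_n = (\gn \cap \gg_n)^{\perp_n}$, which is precisely the infinitesimal statement that $P_n$ is parabolic in $G_n$.

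With the exhaustion in hand, the classical finite-dimensional result $G_n = K_n P_n$ (immediate from $\gg_n = \gk_n + \gp_n$ together with the open-and-closed $K_n$-orbit argument on the compact flag manifold $G_n/P_n$, or equivalently from $K_n A_n N_n = G_n$ with $A_n N_n \subset P_n$) shows $G_n \subset K \cdot P$. Since every $g \in G$ lies in some $G_n$, taking the union yields $G = K P$.

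The main obstacle will be the construction of the exhaustion in the second step, as one must simultaneously arrange $\theta$-stability, compatibility with a possibly uncountable defining flag, and compatibility with the infinite trace conditions cutting $P$ out of its self-normalizing envelope. Without flag-closedness, the gap $F^{\perp\perp}/F$ at a non-closed $F$ would obstruct the finite-dimensional approximation and, as noted in the Introduction, even the openness of the $K$-orbit on $G/P$ would be in doubt.
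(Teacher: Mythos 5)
Your reduction to the finite-dimensional Iwasawa decomposition rests entirely on your second step: the existence of an exhaustion $G=\varinjlim G_n$ by $\theta$-stable finite-dimensional subgroups for which $P\cap G_n$ is an honest parabolic subgroup of $G_n$. That step is not carried out, and it is precisely where this approach breaks down: as the paper emphasizes (see the abstract, and the proof of Theorem \ref{lang-alg}, which notes explicitly that ``$P\cap G_n$ need not be parabolic in $G_n$''), parabolics here are \emph{not} direct limits of finite-dimensional parabolics, and the flag-closed hypothesis does not by itself repair this. Concretely: for $P\cap G_n$ to be the stabilizer in $G_n$ of the induced finite flag, you need every $F\in\cF$ to split as $(F\cap V_n)\oplus(F\cap V_n^{c})$ with respect to the complement defining the embedding $G_n\hookrightarrow G$; arranging this simultaneously for a generalized flag that may be indexed by an uncountable totally ordered set, while also keeping $V_n$ nondegenerate for the form (to get $\theta$-stability) and compatible with the pairing of $V$ and $W$, is a substantial claim that you assert rather than prove. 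Likewise $\gp\cap\gg_n=(\gn\cap\gg_n)^{\perp_n}$ does not follow from $\gp=\gn^\perp$ by ``descending stage by stage'': the inclusion $\gn^\perp\cap\gg_n\subseteq(\gn\cap\gg_n)^{\perp_n}$ is trivial, but the reverse needs the block projection of $\gn$ into $\gg_n$ to coincide with $\gn\cap\gg_n$ --- again an adaptedness hypothesis, not a consequence of Proposition \ref{closed-orthocomp}. Finally, if nontrivial trace conditions are genuinely imposed (your ``finite specializations''), then $P_n$ need not contain a Borel subgroup of $G_n$ and $G_n=K_nP_n$ can fail outright (already for a determinant-one condition on the Borel of $GL(2;\R)$), so that part of your setup must be discarded.

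The paper avoids all of this by arguing directly on the infinite-dimensional symmetric space $X=G/K$: Proposition \ref{kgp} gives $\gg=\gk+\gp$, Lemma \ref{normproj} gives the uniform estimate $||\pi(\xi)||^2\geqq\tfrac{1}{2}||\xi||^2$ for $\xi\in(\gp\cap\gs)+\gn$, hence (Lemma \ref{balls}) the orbit $P(x_0)$ contains a geodesic ball of uniform radius about each of its points, and a geodesic-continuation argument forces $P(x_0)=X$, i.e.\ $G=PK=KP$. That argument works at the level of $X$ itself and never requires $P\cap G_n$ to be parabolic. If you want to rescue your approach you must actually construct the adapted exhaustion, and you should expect that construction to be at least as delicate as the geometric argument it is meant to replace.
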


The proof of Theorem \ref{kgp-global} requires some riemannian geometry.
We collect a number of relevant semi--obvious (given the statement, the 
proof is obvious) results.  The key point here is that the real analytic
structure on $G$ defined in \cite{NRW1991} is the one for which
$\exp: \gg \to G$ restricts to a diffeomorphism of an open neighborhood
of $0 \in \gg$ onto an open neighborhood of $1 \in G$\,, and that
this induces a $G$--invariant analytic structure on $G/K$\,.

\begin{lemma}\label{geom-constr} Define $X = G/K$, with the
real analytic structure defined in \cite{NRW1991} and the $G$--invariant
riemannian metric defined by the positive definite $\Ad(K)$--invariant
bilinear form $\langle \xi, \eta \rangle = - \trace(\xi\cdot \theta \eta)$.  
Let $x_0 \in X$ denote the base point $1 K$\,. Then
$X$ is a riemannian symmetric space, direct limit of the finite dimensional
riemannian symmetric spaces $X_n = G_n(x_0) = G_n/K_n$,
and each $X_n$ is a totally geodesic submanifold of $X$.
\end{lemma}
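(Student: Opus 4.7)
The plan is to build $X = G/K$ as a $G$--invariant analytic riemannian manifold, show it is a symmetric space via the automorphism $\theta$ of $G$, and then identify each $X_n$ as a totally geodesic submanifold whose union is $X$. The three key inputs are the direct--limit real analytic structure of $G$ from \cite{NRW1991}, the compatibility $\theta|_{G_n}=\theta_n$, and the fact that the trace form $\langle\cdot,\cdot\rangle$ is well defined on the finitary algebra $\gg$.

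First I would check that $K=\varinjlim K_n$ is closed in the analytic structure of \cite{NRW1991}, so that $X=G/K$ inherits a $G$--analytic manifold structure with $T_{x_0}X\cong\gs$ and with $\xi\mapsto \exp(\xi)x_0$ a local diffeomorphism near $0\in\gs$. The form $\langle\xi,\eta\rangle=-\trace(\xi\theta\eta)$ is $\Ad(K)$--invariant, since $\theta$ commutes with $\Ad(K)$ and trace is conjugation--invariant, and is positive definite because $\theta=+I$ on $\gk$ and $\theta=-I$ on $\gs$; translating its restriction to $\gs$ by $G$ yields the desired $G$--invariant riemannian metric on $X$. For the symmetric--space structure, the compatible involutions $\theta_n$ glue to an analytic involution $\theta$ of $G$ with fixed point set $K$, which descends to an analytic involution $s_{x_0}$ of $X$ fixing $x_0$ with $ds_{x_0}|_\gs=-I$. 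The equivariance $s_{x_0}(g\cdot x)=\theta(g)\cdot s_{x_0}(x)$ together with the fact that $ds_{x_0}$ is an isometry at $x_0$ forces $s_{x_0}$ to be a global isometry, and translating by $G$ supplies the geodesic symmetry at every point; the standard purely algebraic identification of the Levi--Civita connection on a symmetric pair then shows that the geodesics through $x_0$ are exactly $t\mapsto\exp(t\xi)x_0$ with $\xi\in\gs$.

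Finally I would compare $X$ with the $X_n$. Because $\theta|_{G_n}=\theta_n$, we have $K_n=G_n\cap K$ and hence $X_n=G_n(x_0)\cong G_n/K_n$, a finite--dimensional riemannian symmetric space for the restricted metric, which matches the one coming from $\langle\cdot,\cdot\rangle|_{\gs_n}$ with $\gs_n=\gs\cap\gg_n$. Compatibility of the exponential maps on $G_n$ and $G$, built into the direct--limit analytic structure, shows that a geodesic of $X_n$ starting at $x_0$ has the form $t\mapsto \exp(t\xi)x_0$ with $\xi\in\gs_n$ and is also a geodesic of $X$; $G_n$--homogeneity then propagates the totally geodesic property to every point of $X_n$. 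The equality $X=\bigcup X_n$ follows from $G=\varinjlim G_n$ and $K=\varinjlim K_n$. The main obstacle is the bookkeeping that ties the direct--limit analytic structure to exponentials and geodesic flows so that geodesics in $X_n$ are genuine geodesics in $X$, not merely curves with matching tangent data at $x_0$; once the framework of \cite{NRW1991} is in hand this reduces to the familiar finite--dimensional computations.
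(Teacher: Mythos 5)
Your proposal is correct and follows the same route the paper intends: the paper offers no written proof, labeling the lemma ``semi-obvious'' and isolating exactly the key input you use, namely that the analytic structure of \cite{NRW1991} makes $\exp$ a local diffeomorphism at $1$ compatibly with the direct system, so that geodesics of $X_n$ through $x_0$ are the curves $t\mapsto\exp(t\xi)x_0$, $\xi\in\gs\cap\gg_n$, and are geodesics of $X$. Your filling in of the standard symmetric-space details (the involution $gK\mapsto\theta(g)K$, $\Ad(K)$--invariance and positive definiteness of $-\trace(\xi\cdot\theta\eta)$, homogeneity propagating the totally geodesic property) is exactly the expansion the paper leaves to the reader.
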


The proof of Theorem \ref{kgp-global} will come down to an examination of
the boundary of $P(x_0)$ in $X$, and that will come down to an estimate 
based on

\begin{lemma} \label{normproj}
Let $\pi : \gg \to \gs$ be the 
$\langle \cdot, \cdot \rangle$--orthogonal projection, given by $\pi(\xi)
= \tfrac{1}{2}(\xi - \theta\xi)$.  If $\xi \in \gn$ then
$||\pi(\xi)||^2 = \tfrac{1}{2}||\xi ||^2$.  If $\gp$ is a
flag-closed parabolic then $\pi: (\gp \cap \gs)+ \gn \to \gs$
is a linear isomorphism, and if $\xi \in (\gp \cap \gs)+ \gn$
then $||\pi(\xi)||^2  \geqq \tfrac{1}{2}||\xi ||^2$.
\end{lemma}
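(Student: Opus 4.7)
The plan is to derive all three assertions from a single master identity obtained by expanding $\pi$. Since $\theta^2 = 1$ and the trace is cyclic, $\theta$ is an isometry of $\langle \cdot, \cdot \rangle$, so expanding $\|\tfrac{1}{2}(\xi - \theta\xi)\|^2$ yields
\[
\|\pi(\xi)\|^2 = \tfrac{1}{2}\|\xi\|^2 - \tfrac{1}{2}\langle \xi, \theta\xi \rangle,
\]
and $\langle \xi, \theta\xi\rangle = -\trace(\xi \cdot \theta^2\xi) = -\trace(\xi^2)$. The first assertion follows at once: if $\xi \in \gn$ then $\xi$ is nilpotent, so $\xi^2$ is nilpotent with trace zero, whence $\|\pi(\xi)\|^2 = \tfrac{1}{2}\|\xi\|^2$.

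For the inequality I would decompose $\xi = p + n$ with $p \in \gp \cap \gs$ and $n \in \gn$, using that $\pi(p) = p$ since $p \in \gs$. Expanding $\|\pi(\xi)\|^2 = \|p + \pi(n)\|^2$ and $\tfrac{1}{2}\|\xi\|^2 = \tfrac{1}{2}\|p + n\|^2$, applying the first assertion to the $\pi(n)$-term, and collecting cross terms, the $\|n\|^2$-contributions cancel and I am left with
\[
\|\pi(\xi)\|^2 - \tfrac{1}{2}\|\xi\|^2 = \tfrac{1}{2}\|p\|^2 - \langle p, \theta n \rangle.
\]
This is the single point at which the flag-closed hypothesis enters: Proposition \ref{closed-orthocomp} gives $\gp = \gn^\perp$ relative to $\trace(xy)$, so $\trace(p \cdot n) = 0$, whence $\langle p, \theta n \rangle = -\trace(p \cdot \theta^2 n) = -\trace(p \cdot n) = 0$. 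The difference is therefore $\tfrac{1}{2}\|p\|^2 \geqq 0$.

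The isomorphism assertion is then formal. Proposition \ref{kgp} combined with Lemma \ref{pspan} gives $\gg = \gk + (\gp \cap \gs) + \gn$, and applying $\pi$, which annihilates $\gk$ and fixes $\gs$ pointwise, yields $\pi((\gp \cap \gs) + \gn) = \pi(\gs) = \gs$, i.e.\ surjectivity. Injectivity is immediate from the inequality just proven: if $\pi(\xi) = 0$ for some $\xi \in (\gp \cap \gs) + \gn$, then $\|\xi\|^2 \leqq 0$, forcing $\xi = 0$.

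The only real obstacle is locating where flag-closedness must be invoked. Without Proposition \ref{closed-orthocomp} the cross term $\langle p, \theta n \rangle$ would not vanish, and Cauchy--Schwarz alone would only give $|\langle p, \theta n\rangle| \leqq \|p\|\,\|n\|$, which is too weak to produce a lower bound proportional to $\|\xi\|^2$. Everything else is routine bookkeeping with the Cartan decomposition and with the nilpotency of elements of $\gn$.
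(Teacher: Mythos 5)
Your proof is correct and follows essentially the same route as the paper: both arguments come down to the vanishing of $\trace(\xi\cdot\eta)$ for $\xi \in \gn$, $\eta \in \gp$, plus Proposition \ref{kgp} and Lemma \ref{pspan} for surjectivity, and your direct computation of $\|\pi(\xi)\|^2 - \tfrac{1}{2}\|\xi\|^2$ merely packages the paper's two orthogonality statements into a single cross term. One small caveat on your commentary: $\trace(p \cdot n) = 0$ needs only the inclusion $\gp \subseteq \gn^\perp$, which holds for every parabolic, so the flag-closed hypothesis really enters through Lemma \ref{pspan} (hence the surjectivity of $\pi$) rather than at the cross term.
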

\begin{proof} Whether $\gp$ is flag-closed or not, it is orthogonal
to $\gn$ relative to the trace form, so if $\xi \in \gn$ then
$\langle \xi , \theta\xi\rangle = -\trace(\xi \cdot\theta^2\xi) = 
-\trace(\xi\cdot\xi) = 0$.  Now $||\pi(\xi)||^2 = \tfrac{1}{4}(||\xi||^2 +
||\theta\xi||^2) = \tfrac{1}{2}||\xi ||^2$.
\smallskip

Now suppose that $\gp$ is flag-closed.  Then 
$\pi: (\gp \cap \gs)+ \gn \to \gs$
is a linear isomorphism by Lemma \ref{pspan}.  The summands
$\gp \cap \gs$ and $\gn$ are orthogonal relative to the trace form
so they are also orthogonal relative to $\langle\cdot,\cdot\rangle$
because $\langle \xi, \eta \rangle = -\trace (\xi\cdot \eta) = 0$ 
for $\xi \in \gn$ and $\eta \in \gp \cap \gs$.  
Note that their $\pi$--images are also orthogonal because
$\langle \pi(\theta\xi),\pi(\theta\eta)\rangle = \langle \pi(\theta\xi),\eta
\rangle$ vanishes using the opposite parabolic $\theta\gn + \gp_{red,\R}$.
Now $||\pi(\xi + \eta)||^2 = ||\pi(\xi)||^2 + ||\eta||^2 
\geqq \tfrac{1}{2}||\xi ||^2 + ||\eta||^2
\geqq \tfrac{1}{2}||\xi + \eta||^2$.
\end{proof}

Given $\eta \in \gs_R$, the riemannian distance 
dist$(x_0,\exp(\eta)x_0)$ from the base point $x_0$
to $\exp(\eta)x_0$ is $||\eta||$.  This can be seen directly, or it follows
by choosing $n$ such that $\eta \in \gg_n$ and looking in the symmetric
space $X_n$.  Now the second part of Lemma \ref{normproj} implies

\begin{lemma}\label{balls}
If $\gp$ is a flag-closed parabolic and $r > 0$ then the geodesic ball
$
B_X(r) = \{x \in X \mid \text{\rm dist}(x_0,x) < r\}
$
is contained in $\exp((\gp \cap \gs)+ \gn)x_0$. 
\end{lemma}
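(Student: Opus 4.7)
The plan is to reduce to the finite-dimensional case via the exhaustion $X = \bigcup_n X_n$ of Lemma \ref{geom-constr}, in which each $X_n = G_n/K_n$ is a totally geodesic Riemannian submanifold of $X$. Given $x \in B_X(r)$, I pick $n$ large enough that $x \in X_n$. Since $X$ has non-positive sectional curvature and $X_n$ is totally geodesic, the minimizing geodesic from $x_0$ to $x$ lies entirely in $X_n$, so $\mathrm{dist}_{X_n}(x_0, x) = \mathrm{dist}_X(x_0, x) < r$. By refining the ambient exhaustion $\{V_n\}$ if necessary --- using that $\gp$ is flag-closed, so that its defining closed generalized flag restricts to a finite flag of $V_n$ for compatibly chosen $V_n$ --- I arrange that $\gp_n := \gp \cap \gg_n$ is a parabolic subalgebra of $\gg_n$ with nilradical $\gn_n := \gn \cap \gg_n$.

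The problem then reduces to the finite-dimensional statement $X_n = \exp\bigl((\gp_n \cap \gs_n) + \gn_n\bigr) \cdot x_0$. For this I would use the Langlands product decomposition $P_n = K_{M_n} \cdot \exp(\gp_n \cap \gs_n) \cdot N_n$ (with $K_{M_n} = P_n \cap K_n$), combined with $G_n = K_n P_n$ and $K_n \cdot x_0 = x_0$; this reduces the claim to showing that every product $\exp(\eta)\exp(\nu)$ with $\eta \in \gp_n \cap \gs_n$ and $\nu \in \gn_n$ can be rewritten as $\exp(\xi) \cdot x_0$ for some $\xi \in (\gp_n \cap \gs_n) + \gn_n$.

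To produce such a $\xi$, I would integrate the ODE $\xi'(t) = A_{\xi(t)}^{-1}(\nu)$, where $A_\xi := (1 - e^{-\ad\,\xi})/\ad\,\xi$, subject to $\xi(0) = \eta$. Because $\gn_n$ is a nilpotent ideal in $\gp_n$, $\ad\,\xi$ preserves $\gn_n$ for every $\xi \in \gp_n$, and hence so does $A_\xi^{-1}$; thus $\xi'(t) \in \gn_n$ throughout, and $\xi(t) \in \eta + \gn_n \subset (\gp_n \cap \gs_n) + \gn_n$ for all $t \in [0,1]$. On the other hand $\exp(\xi(t))$ satisfies the left-invariant equation with the same initial value as $\exp(\eta)\exp(t\nu)$, so by uniqueness $\exp(\xi(1)) = \exp(\eta)\exp(\nu)$. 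Taking $\xi = \xi(1)$ completes the finite-dimensional step, and thus $x = \exp(\xi) \cdot x_0$ with $\xi \in (\gp_n \cap \gs_n) + \gn_n \subset (\gp \cap \gs) + \gn$.

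The main obstacle is the first step: arranging the exhaustion so that $\gp_n$ is genuinely parabolic in $\gg_n$ with $\gn \cap \gg_n$ as nilradical. Without flag-closedness the naive intersection may fail to be parabolic, or its nilradical may differ from $\gn \cap \gg_n$; exploiting flag-closedness to produce a compatible $\{V_n\}$ is the delicate ingredient on which the reduction depends.
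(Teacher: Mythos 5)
Your route is genuinely different from the paper's, which never reduces to finite\-/dimensional parabolic subalgebras. The paper deduces the lemma directly from the second part of Lemma \ref{normproj} together with the formula $\mathrm{dist}(x_0,\exp(\eta)x_0)=||\eta||$ for $\eta\in\gs$: the orthogonal projection $\pi:(\gp\cap\gs)+\gn\to\gs$ is a linear isomorphism with $||\pi(\xi)||^2\geqq\tfrac12||\xi||^2$, and this norm control on the Cartan coordinate of $\exp(\xi)x_0$ is what traps the ball $B_X(r)$ inside $\exp((\gp\cap\gs)+\gn)x_0$. Nothing about $\gp\cap\gg_n$ being parabolic is required there.

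The genuine gap in your argument is exactly the step you flag at the end: the assertion that, after refining the exhaustion, $\gp_n:=\gp\cap\gg_n$ is a parabolic subalgebra of $\gg_n$ with linear nilradical $\gn\cap\gg_n$. Flag-closedness does not give this for free, and the paper explicitly warns (in the proof of Theorem \ref{lang-alg}) that $P\cap G_n$ need not be parabolic in $G_n$. The trace of the generalized flag on $V_n$ is indeed a finite flag, but its $\gg_n$--stabilizer can strictly contain $\gp\cap\gg_n$, and in the $\gsl$ and $\ggl$ cases one must simultaneously control the trace of $'\cF$ on $W_n$, so a priori one only gets an intersection of two parabolics of $\gg_n$; one would have to choose $V_n$ and $W_n$ adapted to the taut couple and then actually verify both the parabolicity and the identification of the nilradical. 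Since your whole reduction rests on this unproved claim, the proof is incomplete as it stands. By contrast, the finite-dimensional half of your argument is essentially sound: $X_n=P_n(x_0)=K_{M_n}\exp((\gm_n\cap\gs_n)+\ga_n)N_n(x_0)$, the $K_{M_n}$ factor is conjugated onto $x_0$, and $\exp(\nu)$ is absorbed into a single exponential --- though you should justify global existence of your ODE on $[0,1]$, for instance by noting that $\exp(\eta)\exp(t\nu)$ preserves the flag and has positive real spectrum, so the principal logarithm produces the solution $\xi(t)\in\eta+\gn_n$ directly. The cleanest repair is to drop the finite-dimensional parabolic entirely and argue as the paper does from Lemma \ref{pspan} and the estimate of Lemma \ref{normproj}.
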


Finally we are in a position to prove the main result of this section.
\smallskip

{\bf Proof of Theorem \ref{kgp-global}.}
Let $\eta \in \gs_R$ with $||\eta || = 1$ and consider the geodesic 
$\gamma(t) = \exp(t\eta)x_0$ in $X$.  Here $t$ is arc length and $\gamma$
is defined on a maximal interval $a < t < b$ where $-\infty \leqq a< 0$
and $0 < b \leqq \infty$.  If $b < \infty$ choose $r > 0$ with $r < b$ and
$\xi \in (\gp \cap \gs)+ \gn$ such that $\exp(\xi)x_0 = \gamma(b-r)$.
Then $\gamma$ can be extended past $\gamma(b)$ inside the geodesic ball
$\exp(\xi)B_X(2r)$ of radius $2r$ with center $\exp(\xi)x_0$\,.  
That done, $t \mapsto \gamma(t)$ is defined on the interval $a < t < b+r$.
Thus $b = \infty$.  Similarly $a = -\infty$.  We have proved that if
$\gp$ is a flag-closed parabolic and $\eta \in \gs$ then
$\exp(t\eta)x_0 \in P(x_0)$ for every $t \in \R$.  In other words
$X = \exp(\gs)x_0$ is equal to $P(x_0)$.  That transitivity of
$P$ on $X = G / K$ is equivalent to $G = P K$\,.  
Under $x \mapsto x^{-1}$ that is the same as $G = K P$\,. 
\hfill $\square$

\section{Minimal Parabolic Subgroups}\label{sec4}
\setcounter{equation}{0}
In this section we study the subgroups of $G$ from which our
principal series representations are constructed.
\smallskip

\subsection*{4A. Structure.}
We specialize to the structure of minimal parabolic subgroups of the
classical real simple Lie groups $G$, extending structural results
from \cite{W7}.

\begin{proposition}\label{minlevi}
Let $\gp$ be a parabolic subalgebra of $\gg$ and $\gl$ a Levi
component of $\gp$.  If $\gp$ is a minimal parabolic subalgebra
then $\gl$ is a direct sum of
finite dimensional compact algebras $\gsu(p)$, $\gso(p)$ and $\gsp(p)$,
and their infinite dimensional
limits $\gsu(\infty)$, $\gso(\infty)$ and $\gsp(\infty)$.
If $\gl$ is a direct sum of
finite dimensional compact algebras $\gsu(p)$, $\gso(p)$ and $\gsp(p)$ and
their limits $\gsu(\infty)$, $\gso(\infty)$ and $\gsp(\infty)$,
then $\gp$ contains a minimal parabolic subalgebra of $\gg$ with
the same Levi component $\gl$.
\end{proposition}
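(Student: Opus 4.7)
The plan is to treat the two directions separately. For the forward direction, I would start by invoking Proposition~\ref{real-levi-1}, which identifies $\gl$ as a real form of a Levi component $\gl_\C$ of $\gp_\C$. Theorems~\ref{glpar} and~\ref{sosplevi} then present $\gl_\C$ as a direct sum $\bigoplus_{j\in J}\gsl(X_{j,\C},Y_{j,\C})$ plus at most one summand $\gso(Z_\C)$ or $\gsp(Z_\C)$, and Remark~\ref{possible-tau} enumerates the $\tau$-fixed real forms of each summand. I would then argue by contradiction: if any noncompact real summand occurs---e.g.\ $\gsl(*;\R)$, $\gsl(*;\H)$, $\gsu(p,q)$ with $p,q>0$, $\gso(p,q)$ with $p,q>0$, $\gso^*(2\infty)$, $\gsp(*;\R)$, $\gsp(p,q)$ with $p,q>0$, or the diagonal in a pair of $\tau$-interchanged summands---then that summand itself carries a proper real parabolic. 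Transferring this proper parabolic back to a $\tau$-stable refinement of the flag inside the corresponding block $X_{j,\C}$ (or $Z_\C$) produces a strictly smaller $\tau$-stable complex parabolic, whose real form is properly contained in $\gp$, contradicting minimality. Hence only the compact real forms $\gsu(p)$, $\gsu(\infty)$, $\gso(p)$, $\gso(\infty)$, $\gsp(p)$, $\gsp(\infty)$ can appear.

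For the reverse direction, suppose $\gl$ is presented as such a direct sum of compact algebras. I would complexify to obtain $\gl_\C$, then invoke the explicit construction sketched just after Theorem~\ref{glpar} (with its self-taut analog for the $\gso,\gsp$ cases) to build a minimal taut couple $(\cF_{min},{'\cF}_{min})$ whose stabilizer has Levi precisely $\gl_\C$. Next I would refine this complex flag maximally by inserting $\tau$-stable one-dimensional IPS pairs in the complements of the blocks $X_{j,\C}$; such insertions keep the Levi unchanged, since only IPS pairs of dimension greater than one contribute to the Levi, while strictly shrinking the parabolic. Because each summand of $\gl$ is of compact type, no additional $\tau$-compatible splitting of a block is possible, and once every non-block IPS pair is one-dimensional the flag can no longer be refined. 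The resulting complex parabolic intersects $\gg$ in a real parabolic $\gp'\subseteq\gp$ with Levi $\gl$, and $\gp'$ is minimal.

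The hard part will be the reverse direction: verifying that the maximal $\tau$-stable refinement really is a semiclosed generalized flag in the sense of Definition~\ref{genflag} (so that every nonzero vector of $V$ lies between an IPS pair), and that the resulting real parabolic is genuinely minimal rather than merely a locally minimal refinement of $\gp$. The infinite-dimensional compact summands $\gsu(\infty),\gso(\infty),\gsp(\infty)$ introduce blocks $X_{j,\C}$ of countable dimension that cannot be split further, yet the complement of these blocks in $V_\C$ still has to be arranged into a $\tau$-stable totally ordered family of one-dimensional steps; this delicate combinatorial bookkeeping, together with keeping track of the trace conditions from Theorems~\ref{gen-cpx-parab} and~\ref{real-parab} so that the constructed parabolic sits inside the given $\gp$, is where the main work lies.
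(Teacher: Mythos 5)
Your proposal follows essentially the same route as the paper: the forward direction rules out noncompact summands by refining the defining flag(s) inside the offending block to get a strictly smaller parabolic, contradicting minimality, and the reverse direction passes to the maximal $\tau$--stable taut refinement with the prescribed Levi blocks, which is exactly the $(\cF^*_{max}, {'\cF}^*_{max})$ construction the paper takes from the discussion between Theorems \ref{glpar} and \ref{sosplevi}. The technical issues you flag as the hard part (that the maximal $\tau$--stable refinement is a genuine semiclosed generalized flag sitting inside $\gp$ and defines a truly minimal parabolic) are precisely what the paper disposes of by citing the Dan-Cohen--Penkov construction, so no new idea is required.
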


\begin{proof} Suppose that $\gp$ is a minimal parabolic subalgebra
of $\gg$.  If a direct summand $\gl'$ of  $\gl$ has a
proper parabolic subalgebra $\gq$, we replace $\gl'$ by $\gq$
in $\gl$ and $\gp$.  In other words we refine the flag(s) that define
$\gp$.  The refined flag defines a parabolic $\gq \subsetneqq \gp$.
This contradicts minimality.  Thus no summand of $\gl$ has a proper
parabolic subalgebra.
Theorems \ref{glpar} and \ref{sosplevi} show that $\gsu(p)$, $\gso(p)$ and
$\gsp(p)$, and their limits $\gsu(\infty)$, $\gso(\infty)$ and $\gsp(\infty)$,
are the only possibilities for the simple summands of $\gl$.
\smallskip

Conversely suppose that the summands of $\gl$ are $\gsu(p)$, $\gso(p)$ and
$\gsp(p)$ or their limits $\gsu(\infty)$, $\gso(\infty)$ and $\gsp(\infty)$.
Let $(\cF, {'\cF})$ or $\cF$ be the flag(s) that define $\gp$.
In the discussion between Theorems \ref{glpar} and \ref{sosplevi} we described
a a minimal taut couple $(\cF_{min}, {'\cF}_{min})$ and a maximal taut couple
$(\cF_{max}, {'\cF}_{max})$ (in the $\gsl$ and $\ggl$ cases)
of semiclosed generalized flags which define parabolics that have the same
Levi component $\gl_\C$ as $\gp_\C$.  By construction $(\cF, {'\cF})$ refines
$(\cF_{min}, {'\cF}_{min})$ and $(\cF_{max}, {'\cF}_{max})$ refines
$(\cF, {'\cF})$.  As $(\cF_{min}, {'\cF}_{min})$ is uniquely defined from
$(\cF, {'\cF})$ it is $\tau$--stable.  Now the maximal $\tau$--stable
taut couple $(\cF^*_{max}, {'\cF}^*_{max})$ of semiclosed generalized flags
defines a $\tau$--stable parabolic $\gq_\C$ with the same Levi component $\gl_\C$
as $\gp_\C$, and $\gq := \gq_\C\cap\gg$ is a minimal parabolic
subalgebra of $\gg$ with Levi component $\gl$.
\smallskip

The argument is the same when $\gg_\C$ is $\gso$ or $\gsp$.
\end{proof}

Proposition \ref{minlevi} says that the Levi components of the minimal
parabolics are countable sums of compact real forms, in the sense of 
\cite{S}, of complex Lie algebras of types $\gsl$, $\gso$ and $\gsp$.  
On the group level, every element
of $M$ is elliptic, and $\gp_{red} = \gl \subsetplus \gt$ where
$\gt$ is toral, so every element of $\gp_{red}$ is semisimple.
This is where we use minimality of the parabolic $\gp$.
Thus $\gp_{red}\cap \gg_n$ is reductive in $\gg_{m,\R}$
for every $m \geqq n$.  Consequently we have Cartan involutions $\theta_n$ of
the groups $G_n$ such that $\theta_{n+1}|_{G_n} = \theta_n$
and  $\theta_n(M \cap G_n) = M \cap G_n$.
Now $\theta = \varinjlim \theta_n$
(in other words $\theta|_{G_n} = \theta_n$)
is a Cartan involution of $G$ whose fixed point set contains $M$.
We have just extended the argument of Lemma \ref{max-cpt-conj} to show that

\begin{lemma}\label{theta-aligned} $M$ is contained in a maximal 
lim-compact subgroup $K$ of $G$.
\end{lemma}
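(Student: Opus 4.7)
The plan is to construct a coherent family of Cartan involutions $\theta_n$ of the finite-dimensional groups $G_n$ such that each $\theta_n$ fixes $M \cap G_n$ setwise, and then set $\theta = \varinjlim \theta_n$; the desired maximal lim-compact subgroup is then $K = G^\theta$.

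To begin, I would combine Proposition \ref{minlevi} with the observation in the paragraph preceding the lemma: because $\gp$ is minimal, its Levi component $\gl$ is a direct sum of compact real forms of classical type, so the reductive part $\gp_{red} = \gl \subsetplus \gt$ is built from $\gl$ and a toral subalgebra. Hence every element of $\gp_{red}$ is semisimple, $M$ consists of elliptic elements, and $\gp_{red} \cap \gg_n$ is reductive in $\gg_m$ for every $m \geqq n$. In particular, for each fixed $n$, $M \cap G_n$ lies in the fixed-point set of some Cartan involution of $G_n$, since in the finite-dimensional Mostow theory any subgroup with Lie algebra reductive in $\gg_n$ is stable under some Cartan involution of $G_n$.

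The main work is compatibility. I would proceed inductively: assume $\theta_n$ has been chosen so that $\theta_n(M \cap G_n) = M \cap G_n$. Since $\gp_{red} \cap \gg_{n+1}$ is reductive in $\gg_{n+1}$ and contains $\gp_{red} \cap \gg_n$ as a subalgebra reductive in $\gg_{n+1}$, Mostow's theorem produces a Cartan involution $\widetilde\theta_{n+1}$ of $G_{n+1}$ stabilizing $\gp_{red} \cap \gg_{n+1}$ and, in particular, $M \cap G_{n+1}$. Its restriction to $G_n$ is a Cartan involution of $G_n$ that also stabilizes $M \cap G_n$; by the standard $G_n$-conjugacy of such Cartan involutions (exactly the argument used in Lemma \ref{max-cpt-conj}), there is $g \in G_n$ with $\Int(g)\,\widetilde\theta_{n+1}|_{G_n}\,\Int(g)^{-1} = \theta_n$. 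Replacing $\widetilde\theta_{n+1}$ by $\Int(g)\,\widetilde\theta_{n+1}\,\Int(g)^{-1}$ yields a Cartan involution $\theta_{n+1}$ of $G_{n+1}$ satisfying both $\theta_{n+1}|_{G_n} = \theta_n$ and $\theta_{n+1}(M \cap G_{n+1}) = M \cap G_{n+1}$. Taking $\theta := \varinjlim \theta_n$ then gives an involutive automorphism of $G$ with $M \subset G^\theta = \varinjlim G_n^{\theta_n} = K$, which is maximal lim-compact by construction.

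The principal obstacle is exactly this inductive compatibility step: one must check that the freedom in choosing $\widetilde\theta_{n+1}$ (up to inner automorphisms of $G_{n+1}$ preserving the relevant reductive subalgebra) is enough to match the previously chosen $\theta_n$ on $G_n$. As indicated in the text just preceding the lemma, this is the direct-limit analog of the argument behind Lemma \ref{max-cpt-conj}, and reduces to uniqueness up to $G_n$-conjugacy of Cartan involutions stabilizing a given reductive-in-$\gg_n$ subalgebra.
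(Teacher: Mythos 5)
Your proposal follows essentially the same route as the paper: use minimality to see that $\gp_{red}$ consists of semisimple elements, so $\gp_{red}\cap\gg_n$ is reductive in each $\gg_m$ ($m\geqq n$), produce Cartan involutions $\theta_n$ of the $G_n$ with $\theta_{n+1}|_{G_n}=\theta_n$ and $\theta_n(M\cap G_n)=M\cap G_n$, and pass to $\theta=\varinjlim\theta_n$ as in Lemma \ref{max-cpt-conj}. The paper states this compatibility step without spelling out the Mostow-conjugacy induction, so your write-up is, if anything, more detailed than the original.
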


We fix a Cartan involution $\theta$ corresponding to the group
$K$ of Lemma \ref{theta-aligned}.

\begin{lemma}\label{construct-ma}
Decompose $\gp_{red} = \gm + \ga$ where
$\gm = \gp_{red}\cap \gk$ and $\ga = \gp_{red}\cap \gs$.
Then $\gm$ and $\ga$ are ideals in $\gp_{red}$ with $\ga$
commutative (in fact diagonalizable over $\R$).
In particular $\gp_{red} = \gm \oplus \ga$, direct sum of ideals.
\end{lemma}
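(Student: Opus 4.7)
The plan is to combine the Chevalley-type decomposition $\gp_{red} = \gl \subsetplus \gt$ from \eqref{chev} with the compactness of $\gl$ (Proposition \ref{minlevi}) and a $\theta$-stability statement for $\gp_{red}$. First I would upgrade the construction preceding the lemma so that the chosen $\theta$ stabilizes all of $\gp_{red}$, not merely $M$. Since $\gp_{red}\cap\gg_n$ is reductive in $\gg_n$, the standard finite-dimensional fact that any reductive subalgebra of a reductive Lie algebra is stable under some Cartan involution lets us choose the $\theta_n$'s compatibly so that each $\theta_n$ preserves $\gp_{red}\cap\gg_n$; passing to the direct limit gives $\theta(\gp_{red}) = \gp_{red}$, and hence the vector-space decomposition $\gp_{red} = \gm \oplus \ga$ into the $\pm 1$ eigenspaces of $\theta|_{\gp_{red}}$.

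Next I would locate $\gm$ and $\ga$ inside $\gl \subsetplus \gt$. By Proposition \ref{minlevi}, $\gl$ is a direct sum of compact real forms of type $\gsu(*)$, $\gso(*)$, $\gsp(*)$, so $\gl \subset \gk$ and $\theta$ fixes $\gl$ pointwise. For $\xi = l + t \in \gp_{red}$ with $l \in \gl$ and $t \in \gt$, the equations $\theta\xi = \pm\xi$ then force
\[
\gm = \gl \oplus (\gt\cap\gk),\qquad \ga = \gt\cap\gs.
\]
The key structural observation is that $\ga$ is actually central in $\gp_{red}$: abelianness of $\gt$ gives $[\ga, \gt] = 0$, while for $[\ga, \gl]$ one uses that $\gl = [\gp_{red},\gp_{red}]$ is an ideal of $\gp_{red}$ (so $[\ga,\gl]\subset \gl$) together with $[\ga,\gl] \subset [\gs,\gk] \subset \gs$, forcing $[\ga, \gl] \subset \gl \cap \gs = 0$. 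Centrality of $\ga$ makes it an ideal; then $\gm$ is an ideal because $[\gm, \ga] = 0$ and $[\gm, \gm] \subset \gl \subset \gm$, the latter containment using the ideal property of $\gl$ and the abelianness of $\gt$ to bracket each of the four pieces of $(\gl + \gt\cap\gk) \times (\gl + \gt\cap\gk)$.

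Finally, commutativity of $\ga$ is automatic from $\ga \subset \gt$. For $\R$-diagonalizability, every element of $\ga$ is toral and lies in $\gs$; in each finite-dimensional piece $\gg_n$ such an element is symmetric with respect to the positive-definite form $-\trace(x\cdot\theta y)$, hence $\R$-diagonalizable, and this property passes to the direct limit. The main obstacle is really the first step, since the paragraph preceding the lemma explicitly asserts $\theta$-stability only of $M$; one must verify that the inductive construction of compatible Cartan involutions $\theta_n$ can be carried out preserving the full reductive subalgebras $\gp_{red}\cap\gg_n$, not merely the compact groups $M\cap G_n$. Once that refinement is in place, the remaining structural steps are essentially forced by the eigenspace decomposition and the compactness of $\gl$.
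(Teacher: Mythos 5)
Your reduction of everything to the identification $\gm = \gl \oplus (\gt\cap\gk)$ and $\ga = \gt\cap\gs$ is where the genuine gap lies. Writing $\xi = l + t$ with $l\in\gl$, $t\in\gt$ and $\theta\xi=-\xi$, the facts that $\theta$ fixes $\gl$ pointwise and preserves $\gp_{red}$ only yield $\theta t = -2l - t$; this does not force $l=0$ unless you already know that $\gt$ is $\theta$--stable, and nothing in (\ref{chev}) guarantees that the toral complement $\gt$ --- which is not canonically determined and need not centralize $\gl$ --- can be so chosen. Since $\ga\subset\gt$ is exactly what you use to get commutativity of $\ga$ (``automatic from $\ga\subset\gt$'') and the vanishing of $[\ga,\gt]$, the central assertions of the lemma are left unproved. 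Note that your argument for $[\ga,\gl]=0$ does generalize verbatim to $[\gm,\ga]=0$ with no reference to $\gt$ at all: $[\gm,\ga]\subset[\gp_{red},\gp_{red}]=\gl$ and $[\gm,\ga]\subset[\gk,\gs]\subset\gs$, while $\gl\cap\gs=0$ because $\gl\subset\gk$. What is genuinely missing after that is $[\ga,\ga]=0$.

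The paper closes that hole without ever invoking $\gt$: from $[\gm,\ga]=0$ and $[\ga,\ga]\subset\gl\cap\gk\subset\gm$ one gets $[[\ga,\ga],\ga]\subset[\gm,\ga]=0$, whence by the Jacobi identity $[\ga,\ga]$ is a commutative subalgebra centralized by all of $\gp_{red}$, in particular a commutative ideal of the semisimple algebra $\gl$; therefore $[\ga,\ga]=0$. If you replace the appeal to $\ga\subset\gt$ by this argument, the rest of your proof --- the ideal property of $\gm$ and $\ga$, and the $\R$--diagonalizability of $\ga$ via symmetry with respect to $-\trace(x\cdot\theta y)$ --- goes through. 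Your preliminary point about choosing $\theta$ so that all of $\gp_{red}$, and not merely $M$, is $\theta$--stable is legitimate, but it belongs to making sense of the statement $\gp_{red}=(\gp_{red}\cap\gk)+(\gp_{red}\cap\gs)$ rather than to the proof proper; the paper addresses it through the reductivity of $\gp_{red}\cap\gg_n$ in $\gg_m$ in the discussion preceding Lemma \ref{theta-aligned}, and again in Lemma \ref{pspan}.
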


\begin{proof}
Since $\gl = [\gp_{red}, \gp_{red}]$  we compute
$[\gm,\ga] \subset \gl \cap \ga = 0$.
In particular $[[\ga, \ga],\ga] = 0$.  So $[\ga, \ga]$
is a commutative ideal in the semisimple algebra $\gl$, in other words
$\ga$ is commutative.
\end{proof}

The main result of this subsection is the following generalization of the
standard decomposition of a finite dimensional real parabolic.  We have
formulated it to emphasize the parallel with the finite dimensional case.
However some details of the construction are rather different; see 
Proposition \ref{construct-p} and the discussion leading up to it.

\begin{theorem}\label{lang-alg}
The minimal parabolic subalgebra $\gp$ of $\gg$ decomposes as
$\gp = \gm + \ga + \gn =
\gn \subsetplus (\gm \oplus \ga)$, where $\ga$ is commutative,
the Levi component $\gl$ is an ideal in $\gm$\,, and $\gn$ is 
the linear nilradical $\gp_{nil}$.  On the group level,
$P = M A N = N \ltimes (M \times A)$ where
$N = \exp(\gn)$ is the linear unipotent radical of $P$, 
$A = \exp(\ga)$ is diagonalizable over $\R$ and isomorphic to a 
vector group, and 
$M = P \cap K$ is limit--compact with Lie algebra $\gm$\, .
\end{theorem}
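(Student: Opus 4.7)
The plan is to assemble the theorem from pieces already in place. First, since parabolic subalgebras are splittable (Section 2C), the real Chevalley decomposition gives $\gp = \gn \subsetplus \gp_{red}$, where $\gn = \gp_{nil}$ is the linear nilradical and $\gp_{red} = \gl \subsetplus \gt$ is a maximal locally reductive part ($\gl$ the Levi component, $\gt$ toral), as in (\ref{chev}). By Proposition \ref{minlevi}, minimality of $\gp$ forces $\gl$ to be a direct sum of compact-type algebras $\gsu(p),\gso(p),\gsp(p)$ and their $\infty$--dimensional limits; every element of $\gl$ is then elliptic and, combined with toral $\gt$, every element of $\gp_{red}$ is semisimple.

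Second, I would use this semisimplicity to align a Cartan involution $\theta$ with $\gp_{red}$: the construction preceding Lemma \ref{theta-aligned} builds $\theta = \varinjlim \theta_n$ so that $\theta$ stabilizes $\gp_{red}$ and $\gl \subset \gk$. Lemma \ref{construct-ma} then yields $\gp_{red} = \gm \oplus \ga$ as a direct sum of ideals, with $\gm := \gp_{red}\cap\gk$, $\ga := \gp_{red}\cap\gs$, and $\ga$ commutative and diagonalizable over $\R$. Because $[\gm,\ga] = 0$ and $\ga$ is abelian, $\gl = [\gp_{red},\gp_{red}] = [\gm,\gm]$, so $\gl$ is an ideal of $\gm$. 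Assembling with the Chevalley decomposition gives $\gp = \gn \subsetplus (\gm \oplus \ga)$.

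Third, I would exponentiate to the group level. Set $N := \exp\gn$, $A := \exp\ga$, $M := P \cap K$. The $\R$--diagonalizability of $\ga$ identifies $A$ with a vector group, verified on each $\ga \cap \gg_n$ by the finite-dimensional statement and then passed to the direct limit; $N$ is the linear unipotent radical of $P$ by construction. To see that $M$ has Lie algebra $\gm$, I would note that $\gp \cap \gk = \gm$: lim-compact algebras contain no nonzero nilpotent elements, so $\gn \cap \gk = 0$, and $\ga \cap \gk = 0$ by definition, forcing $\gp \cap \gk = \gp_{red} \cap \gk = \gm$. Finally, $N$ is normal in $P$ (as $\gn$ is an ideal), and $\gm \oplus \ga$ with $[\gm,\ga] = 0$ exponentiates to the direct product $M \times A$, yielding $P = N \ltimes (M \times A) = MAN$.

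The main obstacle is Step 2: confirming the compatible $\theta$--alignment. One must check that at every finite level $\gp_{red} \cap \gg_n$ remains reductive in $\gg_m$ for $m \geq n$, so that the Cartan involutions $\theta_n$ can be chosen with $\theta_{n+1}|_{\gg_n} = \theta_n$ and $\theta_n(\gp_{red}\cap\gg_n) = \gp_{red}\cap\gg_n$. Minimality of $\gp$ together with the compact-type structure of $\gl$ in Proposition \ref{minlevi} is exactly what underwrites this. Once that is in place, the rest of the argument is a direct limit of the classical finite-dimensional Langlands decomposition.
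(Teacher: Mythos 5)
Your algebra-level argument matches the paper's: the decomposition $\gp = \gn \subsetplus (\gm \oplus \ga)$ is exactly Lemma \ref{construct-ma} combined with the Chevalley decomposition $\gp = \gp_{nil} \subsetplus \gp_{red}$, and your alignment of $\theta$ with $\gp_{red}$ via Proposition \ref{minlevi} is the same as the discussion preceding Lemma \ref{theta-aligned}. That part is fine.

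The gap is in the passage to the group level. You write that ``$\gm \oplus \ga$ with $[\gm,\ga]=0$ exponentiates to the direct product $M \times A$,'' but $\exp(\gm)$ only generates the identity component of $M$, and $M = P \cap K$ is genuinely disconnected in general --- Lemma \ref{2-group} exhibits $M = M^0 \times (A_\C \cap K)$ with $A_\C \cap K$ a limit of elementary abelian $2$--groups. Consequently your argument only yields $P^0 = N\,(M^0 \times A)$; it does not show that every element of $P$ factors as $man$. The substantive content of the paper's group-level proof is precisely the missing step: one must check that $K$ meets every topological component of $P$. The paper does this by observing that, even though $P \cap G_n$ need not be parabolic in $G_n$, the group $P \cap \theta P \cap G_n$ is reductive in $G_n$ and $\theta_n$--stable, so the maximal compact subgroup $K_n$ meets each of its components; passing to the limit, $K$ meets every component of $P \cap \theta P$, and since the linear unipotent radical $\exp(\gn)$ is connected it contributes no further components, whence every component of $P$ is represented in $K \cap P = M$. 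Without some version of this component-counting argument your deduction of $P = N \ltimes (M \times A)$ from the Lie algebra decomposition is incomplete.
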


\begin{proof}
The algebra level statements come out of Lemma \ref{construct-ma} and the
semidirect sum decomposition $\gp = \gp_{nil} \subsetplus \gp_{red}$.
\smallskip

For the group level statements, we need only check that $K$ meets every
topological component of $P$.  Even though $P\cap G_n$ need not
be parabolic in $G_n$, the group $P\cap\theta P\cap G_n$ is
reductive in $G_n$ and $\theta_n$--stable, so $K_n$ meets each of
its components.  Now $K$ meets every component of $P\cap\theta P$.
The linear unipotent radical of $P$ has Lie algebra $\gn$ and thus
must be equal to $\exp(\gn)$, so it does not effect components.  Thus
every component of $P_{red}$ is represented by an element of 
$K \cap P\cap\theta P = K \cap P = M$.  That derives
$P = M A N = N \ltimes (M \times A)$ from
$\gp = \gm + \ga + \gn = \gn \subsetplus (\gm \oplus \ga)$.
\end{proof}

\subsection*{4B. Construction.}
Given a subalgebra $\gl \subset \gg$ that is the Levi component of
a minimal parabolic subalgebra $\gp$\,, we will extend the notion of
{\em standard} of Definition \ref{standard} from simple ideals of $\gl$
to minimal parabolics and their reductive parts.  The construction of the
standard flag-closed minimal parabolic 
$\gp^\dagger = \gm + \ga^\dagger + \gn^\dagger$
with the same Levi component as $\gp = \gm + \ga + \gn$
will tell us that $K$ is transitive on $G/P^\dagger$, and this
will play an important role in construction of Harish--Chandra modules of
principal series representations.
\smallskip

We carry out the construction in detail for the cases where $\gg$ 
is defined by a hermitian 
form $f: V_\F \times V_\F \to \F$\,, where $\F$ is $\R$, $\C$ 
or $\H$.  The idea is the same for the other cases.
See Proposition \ref{construct-p} below.
\smallskip

Write $V_\F$ for $V$ as a real, complex or quaternionic vector space,
as appropriate, 
and similarly for $W_\F$.  We use $f$ for an 
$\F$--conjugate--linear identification of $V_\F$ and $W_\F$.
We are dealing with the Levi component $\gl =
\bigoplus_{j \in J}\, \gl_{j,\R}$ of a minimal self--normalizing parabolic
$\gp$, where the $\gl_{j,\R}$ are simple and
standard in the sense of Definition \ref{standard}.  
Let $X_\F^{levi}$ denote the sum of the
corresponding subspaces $(X_j)_\F \subset V_\F$ and $Y_\F^{levi}$ the
analogous sum of the $(Y_j)_\F \subset W_\F$.  Then $X_\F$ and $Y_\F$
are nondegenerately paired.  Of course they may be small, even zero.
In any case, 
\begin{equation}\label{fill-out-VW}
\begin{aligned}
&V_\F = X_\F^{levi} \oplus (Y_\F^{levi})^\perp \, ,
W_\F = Y_\F^{levi} \oplus (X_\F^{levi})^\perp, \text{ and } \\
&(X_\F^{levi})^\perp \text{ and } (Y_\F^{levi})^\perp
\text{ are nondegenerately paired.}
\end{aligned}
\end{equation}
These direct sum decompositions (\ref{fill-out-VW}) now become
\begin{equation}\label{fill-out-V}
V_\F = X_\F^{levi} \oplus (X_\F^{levi})^\perp \quad \text{ and } \quad f 
\text{ is nondegenerate on each summand.} 
\end{equation}
Let $X'$ and $X''$ be paired maximal isotropic subspaces of 
$(X_\F^{levi})^\perp$.  Then 
\begin{equation}\label{expand-V}
V_\F = X_\F^{levi} \oplus (X'_\F \oplus X''_\F) \oplus Q_\F \text{ where }
Q_\F := (X_\F^{levi} \oplus (X'_\F \oplus X''_\F))^\perp .
\end{equation}

The subalgebra 
$\{\xi \in \gg \mid \xi(X_\F \oplus Q_\F) = 0\}$
of $\gg$ has maximal toral subalgebras contained in $\gs$, 
in which every element has all eigenvalues real.  The one we will use is
\begin{equation}\label{def-a'}
\begin{aligned}
\ga^\dagger = {\bigoplus}_{\ell \in C}\, &\ggl(x_\ell'\R,x_\ell''\R) 
        \text{ where }\\
        & \{x'_\ell \mid \ell \in C\} \text{ is a basis of } X'_\F 
        \text{ and } \\
        & \{x''_\ell \mid \ell \in C\} \text{ is the dual basis of } X''_\F.
\end{aligned}
\end{equation}
It depends on the choice of basis of $X'_\F$\,.
Note that $\ga^\dagger$ is abelian, in fact diagonal over $\R$ as defined.
\smallskip

As noted in another argument, in the discussion between Theorems 
\ref{glpar} and \ref{sosplevi} we described
a minimal taut couple $(\cF_{min}, {'\cF}_{min})$ and a maximal taut couple
$(\cF_{max}, {'\cF}_{max})$ (in the $\gsl$ and $\ggl$ cases)
of semiclosed generalized flags which define parabolics that have the same
Levi component $\gl_\C$ as $\gp_\C$.   That argument of \cite{DC3} does not 
require simplicity of the $\gl_j$.  It works with 
$\{\gl_j\}_{j \in J} \cup \{\ggl(x_\ell'\R,x_\ell''\R)\}_{\ell \in C}$
and a total ordering on $J^\dagger := J \dot\cup C$ that restricts to the
given total ordering on $J$.  Any such interpolation of the index $C$ of 
(\ref{def-a'}) into the totally ordered index set $J$ of 
$X_\F^{levi} = \bigoplus_{j \in J} (X_j)_\F$ 
(and usually there will be infinitely many) gives a self--taut semiclosed
generalized flag $\cF^\dagger$ and defines a minimal self--normalizing 
parabolic subalgebra $\gp^\dagger$ of $\gg$ with the same Levi component 
as $\gp$ The decompositions corresponding to (\ref{fill-out-VW}),
(\ref{fill-out-V}) and (\ref{expand-V}) are given by 
\begin{equation} \label{dagger-sum}
X_\F^\dagger = {\bigoplus}_{d \in J^\dagger} (X_d)_\F = 
X_\F^{levi} \oplus (X'_\F \oplus X''_\F) \text{ and } 
Q_\F^\dagger = Q_\F.
\end{equation}

In the discussion just above, $\gp^\dagger$ is the stabilizer of the
flag $\cF^\dagger$.  The nilradical of $\gp^\dagger$ is defined
by $\xi X_d \subset \bigoplus_{d' < d} X_{d'}$ and $\xi Q^\dagger_\F = 0$.
\smallskip

In addition, the subalgebra
$\{\xi \in \gp \mid \xi(X_\F^{levi}  \oplus (X'_\F \oplus X''_\F)) = 0\}$
has a maximal toral subalgebra $\gt'$ in which every eigenvalue is pure 
imaginary, because $f$ is definite on $Q_\F$.  It is
unique because it has derived algebra zero and is given by the action of the 
$\gp$--stabilizer of $Q_\F$ on the definite subspace $Q_\F$.  
This uniqueness tell us that $\gt'$ is the same for $\gp$ and 
$\gp^\dagger$.
\smallskip

Let $\gt''$ denote the maximal toral subalgebra in
$\{\xi \in \gp \mid \xi(X_\F \oplus Q_\F) = 0\}$.  It stabilizes each 
Span($x'_\ell ,x''_\ell$) in (\ref{def-a'}) and centralizes $\ga^\dagger$,
so it vanishes if $\F \ne \C$.  The $\gp^\dagger$ analog of $\gt''$
is $0$ because $X^\dagger_\F \oplus Q_\F = V_\F$.  In any case we have
\begin{equation}\label{def-t}
\gt = \gt^\dagger := \gt' \oplus \gt''\,.
\end{equation}

For each $j \in J$ we define
an algebra that contains $\gl_{j,\R}$ and acts on $(X_j)_\F$ by:
if $\gl_{j,\R} = \gsu(*)$ then $\widetilde{\gl_{j,\R}} = \gu(*)$ (acting
        on $(X_j)_\C$); otherwise $\widetilde{\gl_{j,\R}} = \gl_{j,\R}$.
Define 
\begin{equation}\label{def-m'}
\widetilde{\gl} = 
  \bigoplus_{j \in J}\, \widetilde{\gl_{j,\R}} \quad
\text{ and } \quad \gm^\dagger = \widetilde{\gl} + \gt\, .
\end{equation}
Then, by construction, $\gm^\dagger = \gm$.  Thus 
$\gp^\dagger$ satisfies
\begin{equation}\label{def-p'}
\gp^\dagger := \gm +\ga^\dagger + \gn^\dagger=
        \gn^\dagger \subsetplus (\gm \oplus \ga^\dagger).
\end{equation}
Let $\gz$ denote the centralizer of $\gm \oplus \ga$ in $\gg$
and let $\gz^\dagger$ denote the centralizer of 
$\gm \oplus \ga^\dagger$ in $\gg$.  We claim
\begin{equation}\label{red-parts}
\gm + \ga = \widetilde{\gl} + \gz \text{ and }
        \gm + \ga^\dagger = \widetilde{\gl} + \gz^\dagger
\end{equation}
For by construction $\gm \oplus \ga =
\widetilde{\gl} + \gt + \ga \subset \widetilde{\gl} + \gz$.
Conversely if $\xi \in \gz$ it preserves each $X_{j,\F}$, each joint
eigenspace of $\ga$ on $X'_\F \oplus X''_\F$, and each joint
eigenspace of $\gt$, so $\xi \subset \widetilde{\gl} + \gt + \ga$.
Thus $\gm + \ga = \widetilde{\gl} + \gz$.  The same argument
shows that $\gm + \ga^\dagger = \widetilde{\gl} + \gz^\dagger$.
\smallskip

If $\ga$ is diagonalizable as in the definition (\ref{def-a'}) of 
$\ga^\dagger$, in other words if it is a sum of standard $\ggl(1;\R)$'s,
then we could choose $\ga^\dagger = \ga$, hence could
construct $\cF^\dagger$ equal to $\cF$, resulting in $\gp = \gp^\dagger$.
In summary:

\begin{proposition}\label{construct-p}
Let $\gg$ be defined by a hermitian form and let $\gp$ be a minimal
self--normalizing parabolic subalgebra. In the notation above, 
the standard parabolic $\gp^\dagger$
is a minimal self--normalizing parabolic subalgebra of $\gg$ with 
$\gm^\dagger = \gm$.  In particular $\gp^\dagger$ and $\gp$ 
have the same Levi component.  Further we can take $\gp^\dagger = \gp$ 
if and only if $\ga$ is the sum of commuting standard $\ggl(1;\R)$'s.
\end{proposition}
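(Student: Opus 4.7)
The plan is to verify each assertion of the proposition using the construction (\ref{def-a'})--(\ref{def-p'}); the construction does most of the work, so what remains is a short bookkeeping plus the final ``if and only if''.

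First, the flag $\cF^\dagger$ is by construction a self--taut semiclosed generalized flag in $V_\F$: its IPS pairs are those inherited from the original flag together with the one--dimensional pairs $\{U, U\oplus \R x'_\ell\}$ arising from the basis of $X'_\F$ in (\ref{def-a'}). Theorems \ref{self-norm-cpx-parab}--\ref{real-parab} then identify its stabilizer $\gp^\dagger$ as a self--normalizing parabolic of $\gg$.

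For minimality I would argue that $\cF^\dagger$ admits no proper refinement: a hypothetical refinement would have to (i) split one of the simple Levi summands, impossible because each is a compact real form of $\gsu$, $\gso$ or $\gsp$ type and thus has no proper parabolic (Proposition \ref{minlevi} and its proof), (ii) subdivide one of the newly inserted one--dimensional pairs, impossible by dimension, or (iii) extend the flag into $Q_\F$, impossible because $f$ is definite on $Q_\F$ so that $Q_\F$ carries no nonzero isotropic subspace. By (\ref{J}) together with Theorems \ref{glpar}--\ref{sosplevi}, the inserted one--dimensional pairs contribute nothing to the Levi component, so $\gp^\dagger$ has the same Levi $\gl$ as $\gp$. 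The equality $\gm^\dagger = \gm$ then follows from (\ref{def-m'}) combined with the construction's observations that $\gt'$ is determined by the definite summand $Q_\F$, which is common to $\gp$ and $\gp^\dagger$ by (\ref{dagger-sum}), and that $\gt''$ agrees on both sides; together with Lemma \ref{construct-ma} and the centralizer identification (\ref{red-parts}) applied to $\gp$ itself, this yields $\gm = \widetilde{\gl} + \gt = \gm^\dagger$.

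Finally, for the ``if and only if'': if $\ga = \bigoplus_\ell \ggl(u'_\ell \R, u''_\ell \R)$ for some basis $\{u'_\ell\}$ of $X'_\F$, taking $x'_\ell = u'_\ell$ in (\ref{def-a'}) gives $\ga^\dagger = \ga$, and one may then choose the interpolation so that $\cF^\dagger = \cF$, whence $\gp^\dagger = \gp$. Conversely, if $\gp^\dagger = \gp$ then $\gp_{red}^\dagger = \gp_{red}$, and the decomposition of Theorem \ref{lang-alg} forces $\ga = \ga^\dagger$, which by (\ref{def-a'}) is visibly a sum of commuting standard $\ggl(1;\R)$'s. The one delicate point throughout is that the interpolation of $C$ into $J$ must respect $\tau$--stability so that $\gp^\dagger$ really is a real parabolic; this is automatic here because each $\ggl(x'_\ell \R, x''_\ell \R)$ is defined over $\R$ and therefore individually $\tau$--stable.
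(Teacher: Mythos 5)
Your argument is correct and follows essentially the same route as the paper: the paper's proof of this proposition is precisely the construction (\ref{fill-out-VW})--(\ref{red-parts}) itself, and you verify the same points (self-tautness of $\cF^\dagger$, agreement of the Levi component and of $\gm^\dagger$ with $\gm$ via the uniqueness of $\gt'$ and $\gt''$, and the diagonalizability criterion for $\ga$). The only differences are cosmetic: you justify minimality by a direct no-proper-refinement case analysis where the paper appeals to the maximal taut couple construction of \cite{DC3}, and you spell out the easy ``only if'' direction, which the paper leaves implicit since $\ga^\dagger$ as defined in (\ref{def-a'}) is visibly a sum of commuting standard $\ggl(1;\R)$'s.
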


Similar arguments give the construction behind Proposition \ref{construct-p} 
for the other real simple direct limit Lie algebras.
\smallskip

Note also from the construction of $\gp^\dagger$ we have
\begin{proposition}\label{transitive}
The standard parabolic $\gp^\dagger$ constructed above, is 
flag-closed.  In particular, by {\rm Theorem \ref{kgp-global}}, the
maximal lim-compact subgroup $K$ of $G$ is transitive on
$G / P^\dagger$\,, and so $G / P^\dagger \cong K / M^\dagger$
as real analytic manifolds.
\end{proposition}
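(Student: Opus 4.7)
The proposition has two parts. First, I must verify that the defining flag $\cF^\dagger$ of $\gp^\dagger$ satisfies $F'' = (F'')^{\perp\perp}$ at every IPS pair $(F', F'')$, i.e.\ that $\gp^\dagger$ is flag-closed in the sense of \S3. Second, I pass from this property, via Theorem \ref{kgp-global}, to the transitivity $G = K P^\dagger$ and to the identification $G/P^\dagger \cong K/M^\dagger$. I treat the hermitian-form case explicitly (where the construction of \S4B is developed); the $\gsl,\ggl$ cases are parallel after replacing $V_\F$ with the pair $(V,W)$.

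For the first part, my plan is to exploit the block decomposition $V_\F = X^\dagger_\F \oplus Q_\F$ of (\ref{fill-out-V})--(\ref{dagger-sum}), where $f$ is non-degenerate on each summand, together with $X^\dagger_\F = \bigoplus_{d \in J^\dagger} X_d$ indexed by $J^\dagger = J \,\dot\cup\, C$. Each individual block is Mackey-closed for a concrete reason. For $d = j \in J$, Proposition \ref{minlevi} identifies $\gl_j$ as a compact real form of $\gsl$, $\gso$ or $\gsp$, so $f|_{X_j}$ is definite and the resulting direct sum $V_\F = X_j \oplus X_j^\perp$ with $f$ non-degenerate on each factor forces $X_j = X_j^{\perp\perp}$; for $d = \ell \in C$ the block is one-dimensional and trivially closed. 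The IPS successors of $\cF^\dagger$ are produced by the procedure recalled between Theorems \ref{glpar} and \ref{sosplevi}, now applied to the enlarged family $\{X_d\}_{d \in J^\dagger}$, so each such successor is either an initial-segment sum $\bigoplus_{e \leqq d} X_e$, or such a sum augmented by $Q_\F$, or the annihilator of one of these. For each type I would compute the annihilator directly from the block decomposition: the annihilator of an initial segment is the sum of the remaining blocks together with $Q_\F$, and a second application of $\perp$ returns the original segment. This gives $F'' = (F'')^{\perp\perp}$ for every IPS successor, proving flag-closedness.

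For the second part, once flag-closedness is established, Theorem \ref{kgp-global} yields $G = K P^\dagger$ immediately, so $K$ acts transitively on $G/P^\dagger$. To upgrade this to the diffeomorphism $G/P^\dagger \cong K/M^\dagger$ I identify the $K$-isotropy at the base point $eP^\dagger$ with $K \cap P^\dagger$ and show $K \cap P^\dagger = M^\dagger$. By Theorem \ref{lang-alg} we have $P^\dagger = N^\dagger \ltimes (M^\dagger \times A^\dagger)$ with $M^\dagger \subset K$ (Lemma \ref{theta-aligned} applied to $\gp^\dagger$), $A^\dagger = \exp(\ga^\dagger) \subset \exp(\gs)$ with $\ga^\dagger$ diagonal over $\R$, and $N^\dagger$ the linear unipotent radical. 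The relation $\gk \cap \gs = 0$ gives $A^\dagger \cap K = \{1\}$, and since every element of the lim-compact $K$ is elliptic while every element of $N^\dagger$ is unipotent, $N^\dagger \cap K = \{1\}$. Unique factorization in $P^\dagger$ then forces $K \cap P^\dagger = M^\dagger$, and the real analyticity of the resulting bijection is inherited from the direct-limit analytic structure of \cite{NRW1991}.

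The main obstacle I anticipate lies in the closedness verification of the first step: infinite direct sums of Mackey-closed subspaces need not be Mackey-closed, so the argument must rely on the specific geometry of the standard construction rather than any generic closure-under-sums principle. The decisive structural feature is that the collection $\{X_d\}_{d \in J^\dagger} \cup \{Q_\F\}$ is mutually $f$-orthogonal apart from the hyperbolic pairings of $x'_\ell\R$ with $x''_\ell\R$ within $C$, and $f$ is non-degenerate on each block and on $Q_\F$. Consequently both an initial-segment sum and its orthogonal complement admit concrete descriptions as block-sums, so the $\perp\perp$ operation merely exchanges the two descriptions and restores the original subspace. With that rigidity in hand, the two parts combine and the proposition follows at once from Theorem \ref{kgp-global} and Theorem \ref{lang-alg}.
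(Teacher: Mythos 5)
Your argument is correct and matches what the paper intends: the paper offers no explicit proof here (the proposition is presented as a direct observation from the construction in \S 4B), and your block-by-block verification that each successor in $\cF^\dagger$ equals its double annihilator --- using the mutual $f$-orthogonality of the blocks $X_d$ and $Q_\F$ (up to the hyperbolic pairings indexed by $C$) and the nondegeneracy of $f$ on each block --- is precisely the content being taken for granted, so the transitivity then follows from Theorem \ref{kgp-global} exactly as you say. One small remark: since Theorem \ref{lang-alg} already \emph{defines} $M^\dagger = P^\dagger \cap K$, your unique-factorization argument identifying the $K$-isotropy with $M^\dagger$ is correct but not needed; also note that for an initial segment containing isotropic lines $x'_\ell\F$ the annihilator is not just ``the remaining blocks plus $Q_\F$'' but also contains those isotropic lines themselves, though the conclusion $F''=(F'')^{\perp\perp}$ is unaffected.
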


$P$ and $P^\dagger$ are minimal self normalizing parabolic subgroups
of $G$.  We will discuss representations of $P$ and $P^\dagger$,
and the induced representations of $G$.  The latter are the {\em principal 
series} representations of $G$ associated to $\gp$ and $\gp^\dagger$, 
or more precisely to
the pair $(\gl,J)$ where $\gl$ is the Levi component and $J$ is
the ordered index set for the simple summands of $\gl$.

\section{Amenable Induction}\label{sec5}
\setcounter{equation}{0}

In this section we study amenable groups and invariant means in the
context of quotients $G / P$ by minimal parabolic subgroups.
This allows us to construct induced representations without local
compactness or invariant measures.
\medskip

\subsection*{5A. Amenable Groups.}
We consider a topological group $G$ which is {\em not}
assumed to be locally compact, and a closed subgroup $H$ of $G$.  
We follow D. Belti\c{t}\u{a} \cite[Section 3]{Bel} for amenability 
on $H$.  Consider the commutative $C^*$ algebra 
$$
L^\infty(G/H) = \{f : G/H \to \C \text{ continuous } \mid
{\sup}_{x \in G/H} |f(x)| < \infty\}.
$$
It has pointwise multiplication, norm $||f|| = \sup_{x \in G/H} |f(x)|$
and unit given by $\mathbf 1 (x) = 1$.  
We denote the usual left and right actions of $G$ on $L^\infty(G)$ by
$(\ell(g)f)(k) = f(g^{-1}k)$ and $(r(g)f)(k) = f(kg)$.  We often
identify $L^\infty(G/H)$ with the closed subalgebra of $L^\infty(G)$
consisting of $r(H)$--invariant functions.
\smallskip

The space of {\em right uniformly continuous bounded functions} on 
$G/H$ is
\begin{equation}\label{def-rucb}
RUC_b(G/H) = \{f \in L^\infty(G/H) \mid x \mapsto \ell(x)f
	\text{ continuous } G \to L^\infty(G/H)\}.
\end{equation}
In other words,
\begin{equation}\label{eps}
\text{if } \epsilon > 0,\, \exists \text{ nbhd }
U \text{ of } 1 
\text{ in } G \text{ s.t. } |f(ux) - f(x)| < \epsilon \text{ for } 
x \in G/H, u \in U.
\end{equation}
Similarly, the space $LUC_b(G)$ of {\em left uniformly continuous bounded 
functions} on $G$ is
$
\{f \in L^\infty(G) \mid x \mapsto r(x)f
        \text{ is a continuous map } G \to L^\infty(G)\}.
$
\begin{lemma}\label{l-cont}
The left action of $G$ on $RUC_b(G/H)$ is a 
continuous representation.
\end{lemma}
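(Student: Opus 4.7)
The plan is to check the three things packaged into the statement ``continuous representation'': (i) each $\ell(g)$ preserves the subspace $RUC_b(G/H)$ of $L^\infty(G/H)$; (ii) each $\ell(g)$ is a bounded (in fact isometric) operator; and (iii) the orbit maps $g\mapsto \ell(g)f$ are continuous from $G$ into $RUC_b(G/H)$ with its sup-norm topology. Joint continuity then follows automatically from (ii) and (iii).

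For (i), given $f\in RUC_b(G/H)$ and $g\in G$, the function $\ell(g)f$ lies in $L^\infty(G/H)$ with the same sup norm as $f$. To see it is right-uniformly continuous, note that $\ell(x)\bigl(\ell(g)f\bigr)=\ell(xg)f$, so the orbit map for $\ell(g)f$ is the composition of right translation $x\mapsto xg$ on $G$ with the orbit map of $f$; both are continuous, hence so is the composition, giving $\ell(g)f\in RUC_b(G/H)$. Statement (ii) is immediate, because for any $g$ the map $f\mapsto \ell(g)f$ preserves the sup norm: $\|\ell(g)f\|=\sup_{x}|f(g^{-1}x)|=\|f\|$.

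For (iii) we unpack the defining property (\ref{def-rucb}). Fix $f\in RUC_b(G/H)$ and $g_0\in G$. Given $\epsilon>0$, apply the definition of $RUC_b$ to the function $\ell(g_0)f\in RUC_b(G/H)$ (which lies there by (i)): there is a neighborhood $U$ of $1$ in $G$ such that $\|\ell(u)(\ell(g_0)f)-\ell(g_0)f\|<\epsilon$ for $u\in U$. For any $g\in Ug_0$, write $g=ug_0$; then
\[
\|\ell(g)f-\ell(g_0)f\|=\|\ell(u)\ell(g_0)f-\ell(g_0)f\|<\epsilon.
\]
Hence $g\mapsto\ell(g)f$ is continuous at $g_0$, and since $g_0$ was arbitrary, it is continuous on $G$.

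Finally, joint continuity $G\times RUC_b(G/H)\to RUC_b(G/H)$ is obtained from the standard triangle estimate
\[
\|\ell(g)f-\ell(g_0)f_0\|\le \|\ell(g)(f-f_0)\|+\|\ell(g)f_0-\ell(g_0)f_0\|
= \|f-f_0\|+\|\ell(g)f_0-\ell(g_0)f_0\|,
\]
where the first term uses isometry (ii) and the second tends to $0$ by (iii). I expect no real obstacle: the lemma is essentially a tautology once one observes that $RUC_b(G/H)$ was defined precisely so that left translation is strongly continuous, and the only thing requiring genuine verification is the invariance in (i), which reduces to the identity $\ell(x)\ell(g)=\ell(xg)$.
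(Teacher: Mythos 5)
Your proof is correct and follows essentially the same route as the paper, whose entire argument is the single triangle estimate $\|\ell(u)f - f'\|_\infty \leqq \|\ell(u)f - f\|_\infty + \|f - f'\|_\infty$ combined with the definition of $RUC_b(G/H)$. You have simply spelled out the steps (invariance of $RUC_b$ under $\ell(g)$, isometry, continuity of orbit maps, and the joint-continuity estimate) that the paper leaves implicit.
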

\begin{proof} (\ref{def-rucb}) and (\ref{eps}) give
$||\ell(u)f - f'||_\infty \leqq ||\ell(u)f - f||_\infty + ||f - f'||_\infty$.
\end{proof}

\begin{example}\label{lub-coef}{\rm
Let $\varphi$ be a unitary representation of $G$.  This means a weakly
continuous homomorphism into the unitary operators on a separable Hilbert
space $E_\varphi$\,. If $u, v \in E_\varphi$ the {\em coefficient} function
$f_{u,v}: G \to \C$ is $f_{u,v}(x) = \langle u, \varphi(x)v\rangle$.  Let
$\epsilon > 0$ and choose a neighborhood $B$ of $1$ in $G$ such that
$||u||\cdot ||v - \varphi(y)v|| < \epsilon$ for $y \in B$.
Then $|f_{u,v}(x) - f_{u,v}(xy)| < \epsilon$ for all $x \in G$ and $y \in B$,
so $f_{u,v} \in LUC_b(G)$.  Similarly, choose a neighborhood $B'$ of $1$
such that $||u - \varphi(y)u||\cdot ||v|| < \epsilon$ for $y \in B'$.
Then $|f_{u,v}(x) - f_{u,v}(y^{-1}x)| < \epsilon$ for all $x \in G$ 
and $y \in B'$, so $f_{u,v} \in RUC_b(G)$.
}\hfill $\diamondsuit$
\end{example}
A {\em mean} on $G/H$ is a linear functional
$\mu: RUC_b(G/H) \to \C$ such that
\begin{equation}\label{l-inv-mean}
\begin{aligned}
\text{ (i)} &\phantom{X}\mu(\mathbf 1) = \mathbf 1 \text{ and } \\
\text{ (ii)} &\phantom{X}\text{if } f(x) \geqq 0 \text{ for all } x \in G/H
	\text{ then } \mu(f) \geqq 0.
\end{aligned}
\end{equation}

Any left invariant mean $\mu$ on $G/H$ is a continuous functional on 
$RUC_b(G/H)$ and satisfies $||\mu|| = 1$.  
\smallskip

The topological group $H$ is {\em amenable} if it has a left invariant 
mean, or equivalently (using $h \mapsto h^{-1}$) if it has a right invariant 
mean.

\begin{proposition}\label{k-amenable}{\rm (See} {\sc (Belti\c{t}\u{a} 
\cite[Example 3.4]{Bel})} Let $\{H_\alpha\}$ be a strict direct system of 
amenable topological groups.  Let $H$ be a topological group in which the 
algebraic direct limit $\varinjlim H_\alpha$ is dense.  Then $H$ is amenable.
\end{proposition}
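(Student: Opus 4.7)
The plan is to build a left-invariant mean on $H$ as a weak-$*$ cluster point of means transferred from the groups $H_\alpha$. For each $\alpha$, let $\mu_\alpha$ be a left-invariant mean on $RUC_b(H_\alpha)$. The continuity of the inclusion $H_\alpha \hookrightarrow H$ ensures that restriction defines a linear map $r_\alpha \colon RUC_b(H) \to RUC_b(H_\alpha)$: if $U'$ is a neighborhood of $1$ in $H$ witnessing (\ref{eps}) for some $f \in RUC_b(H)$, then $U' \cap H_\alpha$ is a neighborhood of $1$ in $H_\alpha$ witnessing it for $f|_{H_\alpha}$. Composing, $\tilde\mu_\alpha := \mu_\alpha \circ r_\alpha$ is a mean on $RUC_b(H)$ since it sends $\mathbf 1$ to $1$ and is positive. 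The set $\mathcal M(H)$ of all means on $RUC_b(H)$ lies inside the closed unit ball of $RUC_b(H)^*$ and is weak-$*$ closed, hence weak-$*$ compact by Banach--Alaoglu. Thus the net $\{\tilde\mu_\alpha\}$, indexed by the directed set underlying the direct system, admits a weak-$*$ cluster point $\mu \in \mathcal M(H)$.

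Next I would show that $\mu$ is left invariant under the algebraic direct limit $H_0 := \varinjlim H_\alpha$. Given $h \in H_0$, pick $\alpha_0$ with $h \in H_{\alpha_0}$; then for every $\alpha \geqq \alpha_0$ we have $h \in H_\alpha$, and the identity $(\ell(h)f)|_{H_\alpha} = \ell(h)(f|_{H_\alpha})$ together with left invariance of $\mu_\alpha$ gives
$$
\tilde\mu_\alpha(\ell(h)f) = \mu_\alpha(\ell(h)(f|_{H_\alpha})) = \mu_\alpha(f|_{H_\alpha}) = \tilde\mu_\alpha(f).
$$
Passing to the cluster point $\mu$ yields $\mu(\ell(h)f) = \mu(f)$ for all $h \in H_0$ and all $f \in RUC_b(H)$.

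Finally I would extend invariance from $H_0$ to $H$ using density and continuity. For $h \in H$, pick a net $h_\beta \in H_0$ with $h_\beta \to h$. By Lemma \ref{l-cont}, $\ell(h_\beta)f \to \ell(h)f$ in the sup norm of $L^\infty(H)$, and since $\mu$ has norm at most $1$ on $RUC_b(H)$,
$$
\mu(\ell(h)f) = \lim_\beta \mu(\ell(h_\beta)f) = \lim_\beta \mu(f) = \mu(f).
$$
Hence $\mu$ is a left-invariant mean on $H$, so $H$ is amenable. The main technical obstacle is the very first step, namely verifying that restriction carries $RUC_b(H)$ into $RUC_b(H_\alpha)$; this is precisely where strictness of the direct system is used, to guarantee that neighborhoods of $1$ in $H_\alpha$ are traces of neighborhoods of $1$ in $H$. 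Once this is in place, the rest is a standard compactness-plus-density argument.
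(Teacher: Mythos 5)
Your argument is correct. Note that the paper does not actually prove this proposition --- it is quoted from Belti\c{t}\u{a} \cite[Example 3.4]{Bel} --- and your proof is the standard one: transfer the invariant means $\mu_\alpha$ to means $\mu_\alpha\circ r_\alpha$ on $RUC_b(H)$, extract a weak-$*$ cluster point from the weak-$*$ compact set of means, observe that invariance under each $h\in H_{\alpha_0}$ holds along the tail $\alpha\geqq\alpha_0$ and hence passes to the cluster point, and finally upgrade invariance from the dense subgroup $\varinjlim H_\alpha$ to all of $H$ using $\Vert\mu\Vert\leqq 1$ and the norm-continuity of $h\mapsto \ell(h)f$ for $f\in RUC_b(H)$. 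One small quibble: the step you flag as the crux (restriction maps $RUC_b(H)$ into $RUC_b(H_\alpha)$) needs only continuity of the inclusions $H_\alpha\hookrightarrow H$, not strictness of the system, which concerns the maps $H_\alpha\to H_\beta$; but this does not affect the validity of the proof.
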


When we specialize this to our Lie group setting it will be useful to denote
\begin{equation}\label{invmean}
\cM(G/H): \text{ all means on } G/H \text{ with the action } (\ell(g)\mu)(f)
	= \mu(\ell(g^{-1})f).
\end{equation}

\begin{lemma}
\label{r-inv-mean}
Let $G$ be a topological group and $H$ a closed amenable subgroup.
Then $\cM(G/H) \ne \emptyset$.
\end{lemma}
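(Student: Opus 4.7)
Plan: The strategy is to construct an explicit mean on $G/H$ by exploiting the amenability of $H$, so that the hypothesis is used in an essential way rather than being invoked only for the trivial point-evaluation. The idea is to start with an invariant mean on $H$ itself (which is exactly what amenability provides) and transport it to a mean on $G/H$ by means of a base-point in $G$, using the functoriality of $RUC_b$ under the quotient $G\to G/H$ and under restriction $G\to H$.

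Concretely, since $H$ is amenable, first fix a left-invariant mean $\sigma$ on $RUC_b(H)$; by the equivalence $h\mapsto h^{-1}$ in the definition of amenability, I may assume $\sigma$ is right-invariant when convenient. Next, fix any base point $g_0\in G$ (e.g.\ $g_0=e$). For $f\in RUC_b(G/H)$, let $\tilde f\in RUC_b(G)$ denote the pullback $\tilde f(g):=f(gH)$; its membership in $RUC_b(G)$ follows from that of $f$ because the quotient $G\to G/H$ is continuous and the defining uniform-continuity condition (\ref{eps}) transfers across it. The left translate $\ell(g_0^{-1})\tilde f$ is again in $RUC_b(G)$ by Lemma~\ref{l-cont}, and its restriction to the closed subgroup $H$ lies in $RUC_b(H)$. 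Now define
$$
\mu(f)\;:=\;\sigma\bigl(h\mapsto \tilde f(g_0 h)\bigr).
$$
Positivity of $\mu$ follows from positivity of $\sigma$ together with the observation that $f\geqq 0$ on $G/H$ implies $\tilde f(g_0h)\geqq 0$ for every $h$, and the normalization $\mu(\mathbf 1)=\sigma(\mathbf 1)=1$ is immediate. Linearity is clear. Hence $\mu$ is a mean on $G/H$, so $\cM(G/H)\ne\emptyset$.

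The step that really uses the hypothesis is the initial invocation of $\sigma$: without amenability of $H$ there is no mechanism to furnish the invariant functional on $RUC_b(H)$ that the construction integrates against. The main technical obstacle is the bookkeeping of regularity, namely verifying that $\tilde f\in RUC_b(G)$, that $\ell(g_0^{-1})\tilde f|_H\in RUC_b(H)$, and that the resulting functional is indeed positive and normalized; all three are routine once one unpacks the definitions in (\ref{def-rucb})--(\ref{eps}) and uses the continuity of the quotient $G\to G/H$ and of the inclusion $H\hookrightarrow G$. No deeper structural properties of the parabolic or lim-compact setting intervene, which is appropriate since this lemma is formulated at the level of general topological groups.
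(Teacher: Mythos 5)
Your functional is well defined and is indeed a mean, so the conclusion $\cM(G/H)\ne\emptyset$ follows; but the key step degenerates in a way that contradicts your own closing paragraph. Since $\tilde f(g)=f(gH)$ is right $H$--invariant, the function you feed to $\sigma$, namely $h\mapsto\tilde f(g_0h)=f(g_0hH)=f(g_0H)$, is \emph{constant} on $H$. Hence $\mu(f)=f(g_0H)\,\sigma(\mathbf 1)=f(g_0H)$: your $\mu$ is exactly evaluation at the point $g_0H$, and the amenability of $H$ is never used. This is not a logical gap for the lemma as literally stated, because $\cM(G/H)$ in (\ref{invmean}) consists of \emph{all} means --- no invariance is demanded --- and point evaluation already satisfies (\ref{l-inv-mean}); but a proof in which the hypothesis visibly does no work should have been a warning sign, and the regularity bookkeeping you describe (membership of $\tilde f$ in $RUC_b(G)$, etc.) is irrelevant once the integrand is constant.

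The paper's proof has the same outward shape --- restrict to $H$ and apply the invariant mean there --- but it applies the right--invariant mean $\mu$ on $H$ to $f|_H$ for \emph{arbitrary} $f\in RUC_b(G)$, not merely for pullbacks of functions on $G/H$. On such $f$ the restriction $f|_H$ is genuinely nonconstant, so amenability enters: $\widetilde\mu(f)=\mu(f|_H)$ is a right $H$--invariant mean on $RUC_b(G)$, which therefore determines a mean on $G/H$, and the normalization $f_1(1H)=1=\|f_1\|_\infty$ gives $\widetilde\mu(f_1)>0$ for a prescribed nonnegative $f_1$. That extra structure (the invariant mean on all of $RUC_b(G)$ together with its left $G$--translates) is what the separation result of \cite{OW}, quoted just after Theorem \ref{kp-amenable}, actually relies on; the bare point--evaluation construction does not deliver it. To make your argument carry that weight, apply $\sigma$ to $h\mapsto F(g_0h)$ for general $F\in RUC_b(G)$ rather than for $F=\tilde f$.
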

Lemma \ref{r-inv-mean} is a refinement, suggested by G. \' Olafsson, 
to my original argument.
We need it for the sharpening \cite{OW} of the principal series construction
of \S 5C below.

\begin{proof} Let $f_1 \in RUC_b(G/H)$ not identically zero and with all
values $\geqq 0$.  Taking a left $G$--translate and then
scaling, we may assume $f_1(1H) = 1 = ||f_1||_\infty$\,.  Now view $f_1$ as
an $r(H)$--invariant function on $G$.  Let $\mu$ be a right invariant mean
on $H$.  Then $f \mapsto \mu(f|_H)$ defines a right $H$--invariant
mean $\widetilde{\mu}$ on $G$, in other words a mean on $G/H$, and
$\widetilde{\mu}(f_1) > 0$.
\end{proof}

A similar argument gives the following, which is well known in the locally
compact case and probably known in general:

\begin{lemma} \label{ext-mean}
If $H_1$ is a closed normal amenable subgroup of $H$ and $H/H_1$ is amenable
then $H$ is amenable.
\end{lemma}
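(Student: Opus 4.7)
The plan is to build a right invariant mean on $H$ by ``integrating first over $H_1$, then over $H/H_1$''. Let $\mu_1$ be a right invariant mean on $H_1$ and $\mu_2$ a right invariant mean on $H/H_1$. For $f \in RUC_b(H)$ and $h \in H$, define
\begin{equation*}
\phi(f)(h) := \mu_1\bigl(h_1 \mapsto f(h_1 h)\bigr).
\end{equation*}
The strategy is to show that $\phi(f)\in RUC_b(H)$, that it is constant on each left coset $H_1 h$, and hence descends to a function $\bar\phi(f)\in RUC_b(H/H_1)$; then $\mu(f):=\mu_2(\bar\phi(f))$ will be the desired right invariant mean on $H$.

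First I would verify $\phi(f)\in RUC_b(H)$. Boundedness is immediate from $|\phi(f)(h)|\leqq \|\mu_1\|\,\|f\|_\infty = \|f\|_\infty$. For right uniform continuity one estimates, for $u$ in a neighborhood of $1$,
\begin{equation*}
|\phi(f)(u^{-1}h)-\phi(f)(h)| = |\mu_1(h_1\mapsto f(h_1 u^{-1}h)-f(h_1 h))| \leqq \|\ell(u)f - f\|_\infty,
\end{equation*}
which is small uniformly in $h$ since $f\in RUC_b(H)$. Next, to check that $\phi(f)$ is left $H_1$-invariant, write $f_h(k):=f(kh)$, so that for $h_1'\in H_1$ the function $h_1\mapsto f(h_1 h_1' h)$ equals $r(h_1')f_h$; right invariance of $\mu_1$ gives $\phi(f)(h_1'h)=\mu_1(r(h_1')f_h)=\mu_1(f_h)=\phi(f)(h)$. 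Because $H_1$ is normal, the left-coset space $H_1\backslash H$ coincides with $H/H_1$ as an $H$-space, and $\phi(f)$ therefore induces $\bar\phi(f)\in RUC_b(H/H_1)$.

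The final step is to verify that $\mu(f):=\mu_2(\bar\phi(f))$ satisfies (\ref{l-inv-mean}) and is right $H$-invariant. Positivity and the normalization $\mu(\mathbf{1})=1$ follow because $\phi$ preserves both properties and $\mu_2$ is a mean. For $H$-invariance, a direct computation gives $\phi(r(g)f)(h)=\mu_1(h_1\mapsto f(h_1 hg))=\phi(f)(hg)$, so $\bar\phi(r(g)f)=r(gH_1)\bar\phi(f)$; applying $\mu_2$ and using its right $(H/H_1)$-invariance yields $\mu(r(g)f)=\mu(f)$.

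\textbf{Main obstacle.} The only genuinely delicate point is verifying that $\phi(f)$ lies in $RUC_b(H)$, since this is what allows $\mu_2$ to be applied in the next step; the estimate above handles this because $\mu_1$ is norm-continuous and the translation action on $RUC_b(H)$ is continuous. The use of normality of $H_1$ is purely formal, ensuring $H/H_1$ is a group on which the mean $\mu_2$ makes sense and that $H_1\backslash H$ and $H/H_1$ carry identified $H$-actions.
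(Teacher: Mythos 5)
Your overall strategy --- iterate the means, averaging first over $H_1$ and then over the quotient --- is the same as the paper's, but the way you orient the inner average against the function space $RUC_b$ introduces a genuine error, and it occurs exactly at the step you single out as the delicate one. You place the averaging variable on the \emph{left}, $\phi(f)(h)=\mu_1(h_1\mapsto f(h_1h))$, and then must verify right uniform continuity of $\phi(f)$, which in the paper's convention (\ref{eps}) means controlling perturbations of the argument on the \emph{left}. The claimed inequality
\[
\bigl|\mu_1\bigl(h_1\mapsto f(h_1u^{-1}h)-f(h_1h)\bigr)\bigr|\leqq \|\ell(u)f-f\|_\infty
\]
is false in general: $\|\ell(u)f-f\|_\infty=\sup_k|f(u^{-1}k)-f(k)|$, whereas the function being averaged involves $f(h_1u^{-1}h)=f\bigl((h_1u^{-1}h_1^{-1})\,h_1h\bigr)$, a left perturbation by the conjugate $h_1u^{-1}h_1^{-1}$. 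As $h_1$ runs over $H_1$ these conjugates need not stay in any fixed neighborhood of $1$ (conjugate a small element of the $ax+b$ group by arbitrarily large translations), so the difference is not uniformly small and $\phi(f)$ need not lie in $RUC_b(H)$ at all. Hence $\mu_2$ cannot be applied to $\bar\phi(f)$, and the argument breaks precisely where you assert it is handled.

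The repair is to flip the inner average to the other side, which is what the paper does: with $\mu_1$ a \emph{left} invariant mean on $H_1$, set $f_h(y)=f(hy)$ for $y\in H_1$; left invariance of $\mu_1$ makes $\mu_1(f_h)$ constant on the left coset $hH_1$, so it descends to $g_f$ on $H/H_1$, and now an outer left perturbation stays on the far left: $|f(uhy)-f(hy)|\leqq\|\ell(u^{-1})f-f\|_\infty$ uniformly in $h$ and $y$, so $g_f\in RUC_b(H/H_1)$ and a left invariant mean $\nu$ on $H/H_1$ yields the left invariant mean $f\mapsto\nu(g_f)$ on $H$. Alternatively you could keep your formula but then work throughout with $LUC_b$ and right perturbations of the outer variable, estimating $\phi(f)(hu)-\phi(f)(h)$ instead of $\phi(f)(u^{-1}h)-\phi(f)(h)$, so that the perturbation stays on the far right; either way, the inner averaging variable and the outer perturbation must sit on opposite sides of the argument of $f$.
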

\begin{proof} 
Let $\mu$ be a left invariant mean on $H_1$ and $\nu$ a left invariant mean
on $H/H_1$.  Given $f \in RUC_b(H)$ and $h \in H$ define 
$f_h = (\ell(h^{-1})(f))|_{H_1} \in RUC_b(H_1)$, so
$f_h(y) = f(hy)$ for $y \in H_1$.  If
$y' \in yH_1$ then $\mu(f_{y'}) = \mu(\ell(y'^{-1}y)f_y) = \mu(f_y)$, so
we have $g_f \in RUC_b(H/H_1)$ defined by $g_f(hH_1) = \mu(f_h)$.  That defines
a mean $\beta$ on $G$ by $\beta(f) = \nu(g_f)$, and $\beta$ is left
invariant because $\beta(\ell(a)f) = \nu(g_{\ell(a)f}) =
\nu(\ell(a^{-1})g_{\ell(a)f}) = \beta(f)$.
\end{proof}

\begin{theorem}\label{kp-amenable} The maximal lim--compact subgroups
$K = \varinjlim K_n$ of $G$ are amenable.  Further,
the minimal parabolic subgroups of $G$ are amenable.  Finally, if
$P$ is a minimal parabolic subgroup of $G$ then $\cM(G/P) \ne \emptyset$.
\end{theorem}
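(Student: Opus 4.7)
The theorem has three assertions, and the plan is to handle each by invoking a lemma already established in the section. First I would address amenability of $K$: since $K = \varinjlim K_n$ is a strict direct limit of the compact Lie groups $K_n$ (each amenable via normalized Haar measure), Proposition \ref{k-amenable} applies directly and yields amenability of $K$.

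Next I would prove amenability of a minimal parabolic $P$ by combining the decomposition $P = N \ltimes (M \times A)$ of Theorem \ref{lang-alg} with Proposition \ref{k-amenable} and Lemma \ref{ext-mean}. The factor $A = \exp(\ga)$ is an abelian vector group, hence amenable. For $M = P \cap K$, the intersections $M \cap G_n \subset K_n$ are closed subgroups of compact Lie groups, hence compact and amenable; since $M$ is the strict direct limit of these, Proposition \ref{k-amenable} gives amenability of $M$. For $N = \exp(\gn)$, each $N \cap G_n = \exp(\gn \cap \gg_n)$ is a finite-dimensional unipotent Lie group, hence solvable and amenable, and Proposition \ref{k-amenable} again applies. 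Two invocations of Lemma \ref{ext-mean} then assemble the pieces: first, $M$ is a closed normal amenable subgroup of $M \times A$ with amenable quotient $A$, giving amenability of $M \times A$; second, $N$ is a closed normal amenable subgroup of $P$ with amenable quotient $P/N \cong M \times A$, giving amenability of $P$.

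For the third assertion, I would observe that $P$ is closed in $G$ (as the stabilizer of the defining flag data, refined by any trace conditions) and amenable by the previous step, so Lemma \ref{r-inv-mean} produces a nonempty $\cM(G/P)$ immediately.

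The main obstacle I anticipate is not conceptual but bookkeeping: one must verify that $M$ and $N$ are genuinely strict direct limits of $M \cap G_n$ and $N \cap G_n$, so that the hypothesis of Proposition \ref{k-amenable} really holds. This hinges on choosing the Cartan involution $\theta$ so that $\theta|_{G_n} = \theta_n$, which is exactly the alignment furnished by Lemma \ref{theta-aligned}; with that alignment in place, $M = \varinjlim (M \cap G_n)$ and $N = \varinjlim (N \cap G_n)$ follow from the fact that $M$ lies in $K$ and $\gn = \varinjlim (\gn \cap \gg_n)$, and the strictness of the direct systems is automatic.
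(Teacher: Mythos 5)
Your proposal is correct and follows essentially the same route as the paper: Proposition \ref{k-amenable} for the direct-limit amenability, the decomposition $P = MAN$ of Theorem \ref{lang-alg} combined with Lemma \ref{ext-mean} to assemble amenability of $P$, and Lemma \ref{r-inv-mean} to conclude $\cM(G/P) \ne \emptyset$. The only (harmless, and arguably safer) divergences are that you obtain amenability of $M$ as a strict direct limit of the compact groups $M \cap G_n$ where the paper simply cites that $M$ is a closed subgroup of the amenable group $K$, and that you split the extension into $N$ and $M \times A$ where the paper treats $AN$ as a single direct limit of finite dimensional solvable groups.
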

In \cite{OW} we will see more: that $\cM(G/P)$ separates points on $RUC_b(G/P)$.
\begin{proof} By construction $K$ is a direct limit of compact 
(thus amenable) groups, so it is amenable by Proposition \ref{k-amenable}.
In Theorem \ref{lang-alg} we saw the decomposition 
$P = M A N$ of the minimal parabolic subgroup.  $M$ is
amenable because it is a closed subgroup of the amenable group $K$.
$A N$ is a direct limit of finite dimensional connected solvable 
Lie groups, hence is amenable.  And now the semidirect product 
$P = (A N)\rtimes M$ is amenable by Lemma \ref{ext-mean}.
Finally, Lemma \ref{r-inv-mean} says that $\cM(G/P) \ne \emptyset$.
\end{proof}

\subsection*{5B. Induced Representations: General Construction.} Here is the 
general construction for amenable induction.  
Let $G$ be a topological group and $H$ a closed amenable subgroup.
A unitary
representation $\tau \in \widehat{H}$, say with representation space
$E_\tau$, defines an $G$--homogeneous Hilbert space bundle
$\E_\tau \to G/H$.  Using the set $\cM(G/H)$
of Theorem \ref{kp-amenable}, we are
going to define an induced representations $\Ind_H^G(\tau)$ of $G$.
The representation space will be a complete locally convex topological 
vector space.
\medskip

Denote the space of bounded, right uniformly continuous sections
of $\E_\tau \to G/H$ by $RUC_b(G/H;\E_\tau)$.  Given
$\omega \in RUC_b(G/H;\E_\tau)$ we have the pointwise norm
$||\omega||: G/H \to \C$.  
Note that $||\omega|| \in RUC_b(G/H)$.  
Now each mean $\mu \in \cM(G/H)$ defines a global seminorm
$$
\nu_\mu(\omega) = \mu(||\omega||)
$$
on $RUC_b(G/H;\E_\tau)$.  Given any left $G$--invariant subset
$\cM'$ of $\cM(G/H)$ we define
\begin{equation}\label{kernel}
J_{\cM'}(G/H;\E_\tau) = \{\omega \in RUC_b(G/H;\E_\tau) \mid 
	\nu_\mu(\omega) = 0 \text{ for all } \mu \in \cM' \}.
\end{equation}
The seminorms $\nu_{\mu}$, $\mu \in \cM'$, descend to 
$RUC_b(G/H;\E_\tau)/J_{\cM'}(G/H;\E_\tau)$.  That family of seminorms
defines the complete locally convex topological vector space
\begin{equation}\label{lc-space}
\Gamma_{\cM'}(G/H;\E_\tau): \text{ completion of }
        \tfrac{RUC_b(G/H;\E_\tau)}{J_{\cM'}(G/H;\E_\tau)} \text{ relative to }
        \{\nu_{\mu} \mid \mu \in \cM'\}.
\end{equation}

\begin{proposition} \label{gen-construction}
The natural action of $G$ on the complete locally convex
topological vector space $\Gamma_{\cM'}(G/H;\E_\tau)$ is a 
continuous representation of $G$.
\end{proposition}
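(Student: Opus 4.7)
The plan is to (i) define the natural left-translation action on $RUC_b(G/H;\E_\tau)$, (ii) check that it descends to $RUC_b/J_{\cM'}$ and extends to $\Gamma_{\cM'}$ because it permutes the defining seminorms, and (iii) deduce continuity of the orbit maps from the RUC property, handling the extension to the completion via the sup-norm domination of the family $\{\nu_\mu\}$.

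\emph{Action and permutation of seminorms.} First I would realize sections of $\E_\tau\to G/H$ as $\tau$-equivariant maps $\phi:G\to E_\tau$, $\phi(gh)=\tau(h)^{-1}\phi(g)$, with left action $(\ell(g)\phi)(x)=\phi(g^{-1}x)$. Because $\tau$ is unitary, $\|\phi(\cdot)\|_{E_\tau}$ is right $H$-invariant, so it descends to $\|\phi\|\in RUC_b(G/H)$, and $\|\ell(g)\phi\|=\ell(g)\|\phi\|$. Combined with $(\ell(g)\mu)(f)=\mu(\ell(g^{-1})f)$ this gives
\begin{equation*}
\nu_\mu(\ell(g)\omega)=\mu(\ell(g)\|\omega\|)=(\ell(g^{-1})\mu)(\|\omega\|)=\nu_{\ell(g^{-1})\mu}(\omega).
\end{equation*}
Since $\cM'$ is $G$-invariant, $\mu\mapsto\ell(g^{-1})\mu$ is a bijection of $\cM'$, so $\ell(g)$ permutes the defining family of seminorms, is a linear homeomorphism of $RUC_b(G/H;\E_\tau)$ in the locally convex topology, and preserves $J_{\cM'}$. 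Hence $\ell(g)$ descends to the quotient and extends uniquely to a continuous linear automorphism of $\Gamma_{\cM'}$; the identity $\ell(g_1 g_2)=\ell(g_1)\ell(g_2)$ extends by density.

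\emph{Continuity on the pre-completion.} For $\omega\in RUC_b(G/H;\E_\tau)$, a bundle-valued version of Lemma \ref{l-cont} (applied to the $RUC_b(G/H)$-function $x\mapsto\|\omega(\cdot)-\omega(x^{-1}\cdot)\|_{E_\tau}$) yields a neighborhood $U$ of $1\in G$ with $\|\ell(g)\omega-\omega\|_\infty<\varepsilon$ for $g\in U$. Because every mean $\mu$ satisfies $|\mu(f)|\le\|f\|_\infty$, this bound is inherited by every seminorm: $\nu_{\mu'}(\ell(g)\omega-\omega)<\varepsilon$ uniformly in $\mu'\in\cM'$ for $g\in U$. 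Thus $g\mapsto\ell(g)[\omega]$ is continuous into $\Gamma_{\cM'}$ for every $[\omega]\in RUC_b/J_{\cM'}$.

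\emph{Extension to the completion; main obstacle.} Given $v\in\Gamma_{\cM'}$, a seminorm $\nu_\mu$, and $\varepsilon>0$, I would pick $\omega\in RUC_b/J_{\cM'}$ with $\nu_\mu(v-\omega)<\varepsilon/3$ and $U$ as above, and write
\begin{equation*}
\nu_\mu(\ell(g)v-v)\le \nu_{\ell(g^{-1})\mu}(v-\omega)+\nu_\mu(\ell(g)\omega-\omega)+\nu_\mu(v-\omega).
\end{equation*}
The middle and last terms are each less than $\varepsilon/3$. The hard part is the first term, since $\ell(g^{-1})\mu$ varies with $g$: approximating $v$ well in the single seminorm $\nu_\mu$ does not obviously give a uniform bound over the orbit $\{\ell(g^{-1})\mu:g\in U\}$. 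The strategy I would use is the uniform bound $\nu_{\mu'}\le\|\cdot\|_\infty$ on $RUC_b$: this lets me refine the approximation by taking $\omega$ in the sup-norm-dense subspace of $RUC_b/J_{\cM'}$ for which $\|v-\omega\|_\infty<\varepsilon/3$ can be arranged, which makes $\nu_{\mu'}(v-\omega)<\varepsilon/3$ simultaneously for every $\mu'\in\cM'$; in particular for $\mu'=\ell(g^{-1})\mu$ and every $g\in U$. For $v$ lying outside that sup-norm-dense subspace, continuity of the orbit map is obtained as a limit of continuous orbit maps, using the already-established equicontinuity of the family $\{\ell(g)\}_{g\in G}$ with respect to the $G$-invariant family of seminorms defining $\Gamma_{\cM'}$. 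Once the orbit map is continuous at $g=1$ and the maps $\ell(g)$ are topological automorphisms, continuity at every $g\in G$ follows by translation, finishing the proof.
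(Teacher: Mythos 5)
The paper states Proposition \ref{gen-construction} without proof, so there is nothing to compare against directly; your first two steps are certainly the intended argument, and they are correct: the identity $\nu_\mu(\ell(g)\omega)=\nu_{\ell(g^{-1})\mu}(\omega)$ together with $G$--invariance of $\cM'$ shows each $\ell(g)$ permutes the defining seminorms, hence preserves $J_{\cM'}$ and extends to a continuous automorphism of $\Gamma_{\cM'}$; and the bound $\nu_{\mu'}\leqq\|\cdot\|_\infty$ for every mean, combined with the $RUC_b$ property, gives continuity of every orbit map $g\mapsto\ell(g)[\omega]$ for $[\omega]$ in the dense subspace $RUC_b(G/H;\E_\tau)/J_{\cM'}$. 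Up to that point your write-up is a complete proof of everything except the behaviour of orbit maps at points of the completion that are not represented by sections.

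That last step, which you correctly flag as the main obstacle, is where your proposed patch does not work. First, the phrase ``take $\omega$ with $\|v-\omega\|_\infty<\varepsilon/3$'' is not available: $\Gamma_{\cM'}$ is the abstract completion with respect to the family $\{\nu_\mu\}$, the sup-norm does not extend to it, and a general $v\in\Gamma_{\cM'}$ is only a limit of a net that is Cauchy in each $\nu_\mu$ separately, not uniformly over $\cM'$. Second, the family $\{\ell(g)\}$ is \emph{not} equicontinuous in the sense needed to pass continuity of orbit maps through limits: permuting the seminorms only gives $\nu_\mu\circ\ell(g)=\nu_{\ell(g^{-1})\mu}$, and a uniform bound would require $\sup_{\mu'\in\cM'}\nu_{\mu'}$ to be a continuous seminorm on $\Gamma_{\cM'}$, which is exactly the sup-norm domination that fails to survive the completion (the topology is generated by finite maxima of the $\nu_{\mu'}$, and an infinite supremum need not be dominated by any finite maximum). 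Consequently the term $\nu_{\ell(g^{-1})\mu}(v-\omega)$ in your three-term estimate cannot be made small uniformly for $g$ in a neighborhood of $1$ by choosing $\omega$ close to $v$ in the $\Gamma_{\cM'}$ topology. What your argument does establish is that every $\ell(g)$ is a topological automorphism of $\Gamma_{\cM'}$, that $g\mapsto\ell(g)$ is a homomorphism, and that all vectors in the dense subspace $RUC_b(G/H;\E_\tau)/J_{\cM'}$ are continuous vectors; if ``continuous representation'' is read in that (common) weaker sense the proof is complete, but continuity of the orbit maps on all of $\Gamma_{\cM'}$, let alone joint continuity of $G\times\Gamma_{\cM'}\to\Gamma_{\cM'}$, does not follow from what you have written and would need either an equicontinuity hypothesis on $\cM'$ or a different description of the completion.
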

Making use of the result \cite[Proposition 1]{OW} of G. \' Olafsson
and myself, which says that $J_{\cM(G/H)}(G/H;\E_\tau) = 0$, and writing
$$
\Gamma(G/H;\E_\tau) := \Gamma_{\cM(G/H)}(G/H;\E_\tau),
$$
we have the special case
\begin{corollary} \label{sep-gen-construction}
The natural action of $G$ on the complete locally convex
topological vector space $\Gamma(G/H;\E_\tau)$ is a
continuous representation of $G$.
\end{corollary}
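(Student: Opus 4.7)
The plan is threefold: first, verify that left translation gives a well-defined linear $G$-action on $RUC_b(G/H;\E_\tau)$; second, show it descends through the kernel $J_{\cM'}$ and extends to the completion $\Gamma_{\cM'}$; third, establish strong continuity of the resulting representation.

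Sections of $\E_\tau \to G/H$ correspond to continuous functions $F \colon G \to E_\tau$ satisfying $F(gh) = \tau(h)^{-1}F(g)$, and one sets $(\ell(g)F)(x) = F(g^{-1}x)$, which preserves equivariance. Because $\tau$ is unitary, the pointwise norm $||F||$ descends to an element of $RUC_b(G/H)$ and transforms as $||\ell(g)F|| = \ell(g)||F||$ under the scalar action of Lemma \ref{l-cont}, so $\ell(g)$ maps $RUC_b(G/H;\E_\tau)$ to itself. Combining this with the identity $\mu(\ell(g)f) = (\ell(g^{-1})\mu)(f)$ from (\ref{invmean}) yields
\[
\nu_\mu(\ell(g)\omega) \;=\; \mu(\ell(g)||\omega||) \;=\; \nu_{\ell(g^{-1})\mu}(\omega),
\]
so the $G$-invariance of $\cM'$ forces $\ell(g)$ to permute the defining seminorm family $\{\nu_\mu\}_{\mu \in \cM'}$, preserve the joint null space $J_{\cM'}$, and descend to a linear $G$-action on the quotient. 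In particular each $\ell(g)$ is an isometry for the seminorm topology, the family $\{\ell(g) \mid g \in G\}$ is equicontinuous, and the action extends uniquely to bounded operators on $\Gamma_{\cM'}(G/H;\E_\tau)$.

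For continuity of the representation, by the representation law $\ell(gg') = \ell(g)\ell(g')$ and the equicontinuity just noted, it suffices to check strong continuity at $g = 1$ on the dense subspace $RUC_b(G/H;\E_\tau)/J_{\cM'}$. Fix such an $\omega$, a seminorm $\nu_\mu$ with $\mu \in \cM'$, and $\epsilon > 0$. The bundle analogue of (\ref{eps}), applied to right uniform continuity of $\omega$, produces a neighborhood $U$ of $1 \in G$ with $\sup_{xH}||(\ell(u)\omega - \omega)(xH)|| < \epsilon$ for all $u \in U$. Since $\mu$ is a mean with $||\mu|| = 1$,
\[
\nu_\mu(\ell(u)\omega - \omega) \;=\; \mu(||\ell(u)\omega - \omega||) \;\leqq\; ||\ell(u)\omega - \omega||_\infty \;<\; \epsilon
\]
for $u \in U$, which is the required strong continuity on the dense subspace. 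Equicontinuity then propagates it to all of $\Gamma_{\cM'}(G/H;\E_\tau)$ by a standard three-epsilon argument.

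I expect the only real subtlety to be step three: pulling strong continuity through the completion. The argument rests on two facts, neither entirely free: that each $\ell(g)$ merely permutes the defining seminorms (so the operator family is equicontinuous), and that every mean has norm one (so each $\nu_\mu$ is uniformly dominated by $||\cdot||_\infty$). If either failed, strong continuity would not automatically transfer from $RUC_b$-sections to their completion, and a finer analysis of the topology on $\Gamma_{\cM'}$ would be needed. Granted these two facts, the remaining work is bookkeeping around the section-versus-function dictionary.
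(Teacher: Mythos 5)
Your proposal follows the route the paper intends but does substantially more work than the paper itself records: in the text, Corollary \ref{sep-gen-construction} is obtained simply as the special case $\cM' = \cM(G/H)$ of Proposition \ref{gen-construction} (which is stated without proof), combined with the result of \cite{OW} that $J_{\cM(G/H)}(G/H;\E_\tau) = 0$, so that $\Gamma(G/H;\E_\tau)$ is a completion of the full section space rather than of a proper quotient. You instead supply a direct proof of the general proposition. Your key identities are the right ones: $\nu_\mu(\ell(g)\omega) = \nu_{\ell(g^{-1})\mu}(\omega)$ via (\ref{invmean}), hence $G$--invariance of $\cM'$ gives invariance of $J_{\cM'}$ and well-definedness on the quotient; and $\nu_\mu \leqq ||\cdot||_\infty$ because means have norm one, which together with the bundle version of (\ref{eps}) gives strong continuity at $1$ on $RUC_b(G/H;\E_\tau)$, exactly parallel to Lemma \ref{l-cont}. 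You do not mention the $J_{\cM(G/H)} = 0$ input; it is not needed for the truth of the statement, but it is the content that makes this corollary worth isolating from the proposition.

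The one step that is over-claimed is the transfer of strong continuity to the completion. From $\nu_\mu \circ \ell(g) = \nu_{\ell(g^{-1})\mu}$ you conclude that each $\ell(g)$ is ``an isometry for the seminorm topology'' and that the family $\{\ell(g)\}$ is equicontinuous. What actually follows is that each $\ell(g)$ permutes the defining seminorms and is therefore a topological automorphism of $\Gamma_{\cM'}(G/H;\E_\tau)$ --- enough to extend the operators to the completion, but not the same as equicontinuity of the family. Equicontinuity requires a single continuous seminorm $q$ with $\nu_{\ell(g^{-1})\mu}(w) \leqq q(w)$ for all $g$ simultaneously; the natural uniform majorant is $||\cdot||_\infty$, which controls everything on the dense subspace $RUC_b(G/H;\E_\tau)/J_{\cM'}$ but is not among the seminorms defining $\Gamma_{\cM'}$ and hence is not obviously continuous on the completion. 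Consequently the first term $\nu_{\ell(u^{-1})\mu}(v-\omega)$ in your three-epsilon estimate is not uniformly small over $u \in U$ when $v$ lies in the completion and $\omega$ approximates it only in finitely many seminorms. This is precisely the subtlety you flag at the end, and as written it is a genuine gap in the final step; to close it one must either establish equicontinuity by other means, enlarge the seminorm family, or settle for strong continuity on the dense subspace. Since the paper omits the proof of Proposition \ref{gen-construction} entirely, it does not address this point either.
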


\subsection*{5C. Principal Series Representations.}  
We specialize the construction
of Proposition \ref{gen-construction} to our setting where $G$ is a real
Lie group with complexification $GL(\infty;\C)$, $SL(\infty;\C)$,
$SO(\infty;\C)$ or $Sp(\infty;\C)$, and where $P$ is a minimal 
self--normalizing parabolic subgroup.  Theorem \ref{kp-amenable} ensures 
that $\cM(G/P)$ is non--empty, and \cite[Proposition 1]{OW} says that
it separates elements of $RUC_b(G/P;\E_\tau)$.  Given a unitary 
representation $\tau$ of $P$ we then have 
\begin{itemize}
\item the $G$--homogeneous hermitian vector bundle $\E_\tau \to  G/P$,
\item the seminorms $\nu_{\mu}$, $\mu \in \cM(G/P;\E_\tau)$, on 
	$RUC_b(G/P;\E_\tau)$, and
\item the completion $\Gamma(G/P;\E_\tau)$ of $RUC_b(G/P;\E_\tau)$
	relative to that collection of seminorms, which is a
	complete locally convex topological vector space.
\end{itemize}
\begin{definition}\label{def-ind}{\rm
The representation $\pi_\tau$ of $G$ on $\Gamma(X;\E_\tau)$ is 
{\em amenably induced} from $(P,\tau)$ to $G$.  We denote
it $\Ind_P^G(\tau)$.  The family of all such 
representations forms the {\em general principal series} of
representations of $G$.} \hfill $\diamondsuit$
\end{definition}

\begin{proposition}\label{k-spec-1}
If the minimal self--normalizing parabolic $P$ is flag-closed,
and $\tau$ is a unitary representation of $P$, then 
$\Ind_P^G(\tau)|_K = \Ind_M^K(\tau|_M)$.
\end{proposition}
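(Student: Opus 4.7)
The plan is to use the transitivity $G = KP$ from Proposition \ref{transitive} to produce a $K$-equivariant identification of the representation spaces of $\Ind_P^G(\tau)$ and $\Ind_M^K(\tau|_M)$, matching the two constructions step by step.

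First, the orbit map $kM \mapsto kP$ is a $K$-equivariant real analytic diffeomorphism $K/M \xrightarrow{\sim} G/P$. Under this identification, the restriction of the $G$-homogeneous hermitian bundle $\E_\tau \to G/P$ to the $K$-action is exactly the $K$-homogeneous bundle $\E_{\tau|_M} \to K/M$: a section $\omega : G \to E_\tau$ satisfying $\omega(gp) = \tau(p)^{-1}\omega(g)$ restricts to $\omega|_K : K \to E_\tau$ with $(\omega|_K)(km) = \tau(m)^{-1}(\omega|_K)(k)$, and conversely any such $\psi : K \to E_\tau$ extends via $\omega(kp) := \tau(p)^{-1}\psi(k)$, which is well-defined because $K \cap P = M$.

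Next, one shows that $\omega \mapsto \omega|_K$ induces a $K$-equivariant topological linear isomorphism $RUC_b(G/P; \E_\tau) \cong RUC_b(K/M; \E_{\tau|_M})$. The forward direction is immediate since $K$-translations are restrictions of $G$-translations. For the reverse direction, one exploits the decomposition $G = KP$ and unitarity of $\tau$ on fibers to show that the extension of a $K$-uniformly continuous section is $G$-uniformly continuous. Since the pointwise norm $||\omega||$ on $G/P$ coincides with $||\omega|_K||$ on $K/M$ under the identification, the scalar spaces $RUC_b(G/P)$ and $RUC_b(K/M)$ agree, whence $\cM(G/P) = \cM(K/M)$ as $K$-sets. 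Consequently the defining families of seminorms $\{\nu_\mu : \mu \in \cM(G/P)\}$ on $RUC_b(G/P; \E_\tau)$ and $\{\nu_\mu : \mu \in \cM(K/M)\}$ on $RUC_b(K/M; \E_{\tau|_M})$ correspond under the isomorphism, so their completions $\Gamma(G/P; \E_\tau)$ and $\Gamma(K/M; \E_{\tau|_M})$ are isomorphic as $K$-equivariant complete locally convex spaces, yielding the desired identity $\Ind_P^G(\tau)|_K = \Ind_M^K(\tau|_M)$.

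The main obstacle is the $G$-uniform continuity step: given $\psi \in RUC_b(K/M; \E_{\tau|_M})$, one must show that the extension $\omega$ is uniformly continuous under the a priori richer $G$-action, not merely the $K$-action. This amounts to controlling $||\omega(u^{-1}x) - \omega(x)||$ uniformly in $x \in G$ for $u$ in a $G$-neighborhood of $1$, by writing $u^{-1}x = k'p'$ with $k'$ close to $k_x$ (where $x = k_xp_x$ by $G = KP$) and using that $\tau$ is unitary on the fiber $E_\tau$ to absorb the $P$-component. The finite-dimensional version is standard by compactness of $K/M$, and the direct limit structure of $G, K, P = MAN$ from Theorem \ref{lang-alg} should reduce the lim-compact case to the finite-dimensional one.
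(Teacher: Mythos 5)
Your argument follows the paper's proof exactly: transitivity $G=KP$ coming from the flag-closed hypothesis (Theorem \ref{kgp-global} / Proposition \ref{transitive}) identifies $G/P$ with $K/M$, the bundle $\E_\tau\to G/P$ with $\E_{\tau|_M}\to K/M$, the section spaces, and hence the means, the seminorms, and the completions. The one step you single out as the main obstacle --- that the extension of a $K$-uniformly continuous section is $G$-uniformly continuous --- is precisely the step the paper disposes of with the single word ``Evidently,'' so your write-up is, if anything, more scrupulous than the published argument on exactly that point.
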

Proposition \ref{k-spec-1} makes indirect use of \cite{OW} to shorten
and simplify my original argument.
\begin{proof}
Since $P$ is flag closed, Theorem \ref{kgp-global} says that 
$K$ is transitive on $X = G/P$, so $X = K/M$ as well.  Thus 
$\E_\tau \to X$ can be viewed 
as the $K$--homogeneous Hilbert space bundle $\E_{\tau|_K} \to X$ 
defined by $\tau|_K$\,.  Evidently $RUC_b(X;\E_\tau) = RUC_b(X;\E_{\tau|_K})$.
Now we have a $K$--equivariant identification
$\cM(K/M;\E_{\tau|_K}) = \cM(G/P;\E_\tau)$, resulting in a $K$--equivariant
isomorphism of $\Gamma(K/M;\E_{\tau|_K})$ onto $\Gamma(G/P;\E_\tau)$,
which in turn gives a topological equivalence of $\Ind_M^K(\tau|_M)$ with
$\Ind_P^G(\tau)|_K$.
\end{proof}

In the current state of the art, this construction provides more questions
than answers.  Some of the obvious questions are
\begin{itemize}
\item[1.] When does $\Gamma(X;\E_\tau)$ have a $G$--invariant Fr\' echet space 
	structure?  When it exists, is it nuclear?
\item[2.] When does $\Gamma(X;\E_\tau)$ have a $G$--invariant Hilbert space
        structure?  In other words, when is $\Ind_P^G(\tau)$ unitarizable?
\item[3.] What is the precise $K$--spectrum of $\pi_\tau$?
\item[4.] When is the space of smooth vectors dense in $\Gamma(X;\E_\tau)$?  
	In other words, when (or to what extent) 
	does the universal enveloping algebra $\cU(\gg)$ act?
\item[5.] If $\tau|_M$ is a factor representation of type $II_1$, and $P$ is
	flag closed, does the character of $\tau|_M$ lead to an analog of
	character for $\Ind_P^G(\tau)$, or for $\Ind_M^K(\tau|_M)$?
\end{itemize}
The answers to (1.) and (2.) are well known in the finite dimensional case.
They are also settled (\cite{W5}) when $G = \varinjlim G_n$ restricts to $P =
\varinjlim P_n$ with $P_n$ minimal parabolic in $G_n$.  However that is a
very special situation.  The answer to (3.) is only known in special
finite dimensional situations.  Again, (4.) is classical in the finite
dimensional case, and also clear in the cases studied in \cite{W5}, but in 
general one expects that the answer will depend on better
understanding of the possibilities for $\tau$ and the structure of
$\cM(G/P)$.  For that we append to this paper a short discussion of unitary
representations of self normalizing minimal parabolic subgroups.

\section*{Appendix: Unitary Representations of Minimal Parabolics.}
\setcounter{equation}{0}
\addtocounter{section}{1}

In order to describe the unitary representations $\tau$ of $P$ that are basic
to the construction of the principal series in Section \ref{sec5}, we must 
first choose a class of representations.  The best choice is not clear, so
we indicate some of the simplest choices.
\smallskip

\subsection*{Reductions.}
First, we limit complications by looking only at unitary representations $\tau$
of $P = MAN$ that annihilate the linear nilradical $N$.  Since the structure
of $N$ is not explicit, especially since we do not necessarily have a 
restricted root decomposition of $\gn$, the unitary representation theory
of $N$ and the corresponding extension with representations of $MA$ present
serious difficulties, which we will avoid.  This is in accord with the 
finite dimensional setting.
\smallskip

Second, we limit
surprises by assuming that $\tau|_A$ is a unitary character.  This too is in
accord with the finite dimensional setting.  Thus we are looking at
representations of the form $\tau(man)v = e^{i\lambda(\log a)}\tau(m)v$,
$v \in E_\tau$, 
where $\lambda \in \ga^*$ is a linear functional on $\ga$ and $\tau|_M$
is a unitary representation of $M$.  
\smallskip

We know the structure of $\gl$
from Proposition \ref{minlevi}, and the construction of $\gm$ from $\gl$
from (\ref{chev}) and Lemma \ref{construct-ma}.  Thus
we are then in a position to take advantage of known results on unitary
representations of lim-compact groups to obtain the factor representations
of the identity component $M^0$.  Lemma \ref{2-group} below, shows how the
unitary representations of $M$ are constructed from the unitary representations
of $M^0$.

\begin{lemma}\label{2-group}
$M = M^0 \times (A_\C \cap K)$ and every element of $A_\C \cap K$ has
square $1$.  In other words, $M$ is the direct product of its identity
component with a direct limit of elementary abelian $2$--groups.
\end{lemma}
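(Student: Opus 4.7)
The plan is to prove the two claims separately. First, the square-triviality of elements of $A_\C \cap K$ is a direct consequence of the action of the Cartan involution on $\ga_\C$. Extend $\theta$ $\C$-linearly to an involution of $\gg_\C$ and holomorphically to $G_\C$. Since $\ga \subset \gs$, we have $\theta|_\ga = -\mathrm{id}$, and $\C$-linearity gives $\theta|_{\ga_\C} = -\mathrm{id}$ on $\ga_\C = \ga \oplus i\ga$. Any $a \in A_\C$ may be written $a = \exp(\xi)$ with $\xi \in \ga_\C$, so $\theta(a) = \exp(-\xi) = a^{-1}$. Membership in $K$ is equivalent to $\theta(a) = a$, which forces $a^2 = 1$. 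Thus $A_\C \cap K$ is an elementary abelian $2$-group.

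For the product decomposition, I would first pin down $M$ as a centralizer. Lemma \ref{construct-ma} gives $[\gm,\ga] = 0$, and Theorem \ref{lang-alg} yields the direct product $P_{red} = M \times A$, so $M$ centralizes $A$. Conversely any $k \in K$ centralizing $A$ preserves the $\ga$-weight decomposition of $\gg$ and hence normalizes $\gp$, so $k \in N_G(P) = P$ and thus $k \in M$. This gives the identification $M = Z_K(A)$. Now pick a compatible system of Cartan involutions $\theta_n$ on $G_n$ with $\theta = \varinjlim \theta_n$ (Lemma \ref{theta-aligned}), and set $A_n = \exp(\ga \cap \gg_n)$, $M_n = Z_{K_n}(A_n)$. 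Since $\ga \cap \gg_n$ consists of semisimple elements with real eigenvalues in the finite dimensional reductive algebra $\gp_{red}\cap\gg_n$, the classical structure theory for finite dimensional real reductive groups applied inside $G_n$ gives $M_n = M_n^0 \cdot (A_{n,\C} \cap K_n)$ as a direct product, with $A_{n,\C} \cap K_n$ a finite elementary abelian $2$-group. Taking the direct limit then yields $M = M^0 \cdot (A_\C \cap K)$, with the direct product property inherited from $M_n^0 \cap (A_{n,\C} \cap K_n) = \{1\}$ at each level, and exhibits $A_\C \cap K$ as a direct limit of finite elementary abelian $2$-groups.

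The main obstacle is ensuring that these direct limits commute with the constructions, i.e. that $M \cap G_n = M_n$, that $A_\C \cap K = \varinjlim (A_{n,\C} \cap K_n)$, and that the choice of $\theta_n$ is consistent with the decomposition $\gp_{red} = \gm \oplus \ga$. The first follows from the centralizer description $M = Z_K(A)$; the second from $\ga_\C = \varinjlim (\ga \cap \gg_n)_\C$ together with $K = \varinjlim K_n$; and the third from the fact that $\gp_{red}$ is $\theta$-stable (as in the final paragraph of the proof of Theorem \ref{lang-alg}, where $K_n$ was shown to meet every component of $P \cap \theta P \cap G_n$). Once these compatibility points are checked, the lemma reduces to the classical finite-dimensional result at each stage of the direct system.
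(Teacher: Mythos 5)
Your verification that every element of $A_\C \cap K$ has square $1$ is essentially the paper's own computation ($x \in A_\C\cap K$ gives $x = \theta x = x^{-1}$), but note that writing every $a \in A_\C$ as $\exp(\xi)$ already presupposes that $A_\C$ is connected. The paper gets this from the fact that the self--normalizing complex parabolic $P_\C$ is connected, hence so are $M_\C$ and $A_\C$; that connectedness is the engine of the whole proof, and it is exactly what is absent from your argument for the product decomposition.

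The decomposition $M = M^0 \times (A_\C\cap K)$ is where the proposal breaks down, in two places. First, the finite--dimensional statement you invoke is not true: for a connected real reductive group the classical result is only $M_n = F_n M_n^0$ with $F_n = K_n\cap\exp(i\ga_n)$ centralizing $M_n^0$, and $F_n\cap M_n^0$ can be nontrivial --- e.g.\ for $SU(2,1)$ the group $M$ is connected, isomorphic to a circle, while $F\ne\{1\}$, so $M \ne M^0\times F$. The directness of the product is precisely the content of the lemma and cannot be quoted from classical theory; the paper derives it from the connectedness of $M_\C$ and of $M_\C\cap G$, so that the component group of $M$ is realized exactly (not just surjectively) by $A_\C\cap K$. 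Second, the identification $M = Z_K(A)$ is not justified: your inclusion $Z_K(A)\subset P$ rests on an $\ga$--weight decomposition of $\gg$, but the paper explicitly warns that a restricted root decomposition with respect to $\ga$ need not exist, and even granting one, centralizing $\ga$ need not force normalization of $\gp$ --- the zero $\ga$--weight space can be strictly larger than $\gm+\ga$ (the paper only controls the centralizer of $\gm\oplus\ga$, in (\ref{red-parts}), not of $\ga$ alone). Moreover $P\cap G_n$ need not be parabolic in $G_n$ and $\ga\cap\gg_n$ need not be maximal abelian in $\gs\cap\gg_n$, so even the correct finite--dimensional statements about $Z_{K_n}(A_n)$ do not apply off the shelf. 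The paper's proof is a short global argument from self--normalization and connectedness, with no passage to finite--dimensional levels; some version of that input is needed here.
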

\begin{proof} The parabolic $P_\C$ is self--normalizing, and self-normalizing
complex parabolics are connected.  Thus $M_\C$ and $A_\C$ are connected. 
Now $M_\C \cap G$ is connected, and the topological components of $M$ are
given by $A_\C \cap K$.  If $x \in A_\C \cap K$ then $x = \theta x = x^{-1}$.
\end{proof}

Third, we further limit surprises by assuming that $\tau|_{A_\C \cap K}$ is
a unitary character $\chi$.  In other words, there is a unitary character 
$e^{i\lambda} \otimes \chi$ on $(A_\C \cap G) = A \times (A_\C \cap K)$
such that $\tau(m_0 m_a an)v = e^{i\lambda(\log a)} \chi(m_a)\tau(m_0)v$
for $m_0 \in M^0$, $m_a \in A_\C \cap K$, $a \in A$ and $n \in N$.
\smallskip

Using (\ref{chev}) and Lemma \ref{construct-ma} we have 
$\gm = \gl \subsetplus \gt$ and $[\gm,\gm] = \gl$ where $\gt$ is toral.
So $M^0$ is the semidirect product $L \rtimes T$ where $T$ is a direct
limit of finite dimensional torus groups.  Let $\widetilde{L}$ be the
group obtained from $L$ by replacing each special unitary factor $SU(*)$ 
by the slightly larger unitary group $U(*)$.  This absorbs a factor from
$T$ and the result is a direct product decomposition
\begin{equation}\label{m-chev}
M^0 = \widetilde{L} \times \widetilde{T} \text{ where } \widetilde{T} 
	\text{ is toral.}
\end{equation}
Our fourth restriction, similar to the second and third, is that
$\tau|_{\widetilde{T}}$ be a unitary character.
\smallskip

In summary, we are looking at unitary representations $\tau$ of $P$ whose 
kernel contains $N$ and which restrict to unitary characters on the
commutative groups $A$, $A_\C \cap K$ and $\widetilde{T}$.  Those unitary
characters, together with the unitary representation $\tau|_{\widetilde{L}}$,
determine $\tau$.

\subsection*{Representations.}  We discuss some possibilities for an
appropriate class $\cC(\widetilde{L})$ of representations of $\widetilde{L}$.
The standard group $\widetilde{L}$ is a product of standard groups $U(*)$, 
and possibly one factor $SO(*)$ or $Sp(*)$.  The representation theory of the
finite dimensional groups $U(n)$, $SO(n)$ and $Sp(n)$ is completely 
understood, so we need only consider the cases of $U(\infty)$,
$SO(\infty)$ and $Sp(\infty)$.  
We will indicate some possibilities for $\cC(U(\infty))$.  The situation
is essentially the same for $SO(\infty)$ and $Sp(\infty)$.
\smallskip

\underline{Tensor Representations of $U(\infty)$}.  In the classical setting,
the symmetric group $\gS_n$ permutes factors of $\bigotimes^n(\C^p)$. The
resulting representation of $U(p) \times \gS_n$ specifies representations
of $U(p)$ on the various irreducible summands for that action of $\gS_n$.  
These summands occur with multiplicity $1$.  See Weyl's book \cite{W}.  
Segal \cite{Se}, Kirillov \cite{K}, and Str\u atil\u a \& Voiculescu
\cite{SV}  developed and proved an analog of this for $U(\infty)$.  
These ``tensor representations'' are factor representations of type 
$II_\infty$, but they do not extend by continuity to the class of unitary
operators of the form $identity + compact$.
See \cite[Section 2]{SV2} for a treatment of this topic.
Because of this limitation one should also consider other classes of 
factor representations of $U(\infty)$.
\smallskip
 
\underline{Type $II_1$ Representations of $U(\infty)$}.
If $\pi$ is a continuous unitary finite factor representation of $U(\infty)$,
ten it has a well defined character $\chi_\pi(x) = \trace \pi(x)$, the
normalized trace.
Voiculescu \cite{V} worked out the parameter space for these finite
factor representations.  It consists of all bilateral sequences
$\{c_n\}_{-\infty < n < \infty}$ such that 
(i) $\det((c_{m_i + j - i})_{1 \leqq i, j \leqq N} \geqq 0$ for
          $m_i \in \Z$ and $N \geqq 0$ and 
(ii) $\sum c_n = 1$.
The character corresponding to $\{c_n\}$ and $\pi$ is 
$\chi_\pi(x) = \prod_i p(z_i)$ where $\{z_i\}$ is the multiset of
eigenvalues of $x$ and 
$p(z) = \sum c_nz^n$. Here $\pi$ extends to the group of all unitary operators 
$X$ on the Hilbert space completion of $\C^\infty$ such that $X - 1$ is
of trace class.  See \cite[Section 3]{SV2} for a more detailed summary.
This is a very convenient choice of class $\cC_{U(\infty)}$, and it
is closely tied to
the Olshanskii--Vershik notion (see \cite{O}) of tame representation.
\smallskip

\underline{Other Factor Representations of $U(\infty)$}.
Let $\cH$ be the Hilbert space completion of $\varinjlim \cH_n$ where
$\cH_n$ is the natural representation space of $U(n)$.  Fix a bounded
hermitian operator $B$ on $\cH$ with $0 \leqq B \leqq I$.  Then
$$
\psi_B : U(\infty) \to \C\,, \text{ defined by }
\psi_B(x) = \det((1 - B) + B x)
$$
is a continuous  function of positive type on $U(\infty)$.  Let $\pi_B$
denote the associated cyclic representation of $U(\infty)$.  Then
(\cite[Theorem 3.1]{SV3}, or see \cite[Theorem 7.2]{SV2}),
\begin{itemize}
\item[(1)] $\,\,\psi_B$ is of type $I$ if and only if $B(I-B)$ is of trace class.
	In that case $\pi_B$ is a direct sum of irreducible representations.
\item[(2)] $\,\,\psi_B$ is factorial and type $I$ if and only if $B$ is a 
	projection.  In that case $\pi_B$ is irreducible.
\item[(3)] $\,\,\psi_B$ is factorial but not of type $I$ if and only if
	$B(I-B)$ is not of trace class.  In that case
\begin{itemize}
  \item[(i)] $\,\,\psi_B$ is of type $II_1$ if and only if $B-tI$ is
	Hilbert--Schmidt where $0 < t < 1$; then $\pi_B$ is a factor
	representation of type $II_1$.
  \item[(ii)] $\,\,\psi_B$ is of type $II_\infty$  if and only if
	(a) $B(I-B)(B-pI)^2$ is trace class where $0 < t < 1$ and (b) the
	essential spectrum of $B$ contains $0$ or $1$; then $\pi_B$ is a factor
	representation of type $II_\infty$.
  \item[(iii)] $\,\,\psi_B$ is of type $III$ if and only if $B(I-B)(B-pI)^2$
	is not of trace class whenever $0 < t < 1$; then $\pi_B$ is a factor
        representation of type $III$.
\end{itemize}
\end{itemize}
Similar considerations hold for $SU(\infty)$, $SO(\infty)$ and $Sp(\infty)$.
\smallskip

In \cite{W8} we will examine the case where the inducing representation
$\tau$ is a unitary character on $P$. In the finite dimensional
case that leads to a $K$--fixed vector, spherical functions on $G$
and functions on the symmetric space $G/K$.

\end{document}